\newcommand{\R}{{\mathbb R}}
\newcommand{\N}{{\mathbb N}}
\newcommand{\F}{{\mathcal F}}
\newcommand{\W}{{\mathcal W}}
\newcommand{\T}{{\mathcal T}}
\renewcommand{\O}{{\mathcal O}}
\renewcommand{\S}{{\mathcal S}}
\renewcommand{\phi}{{\varphi}}
\newcommand{\dist}{{\rm dist}}
\newcommand{\Tr}{{\rm Tr}}
\renewcommand{\div}{{\rm div}}
\newcommand{\eps}{{\varepsilon}}
\renewcommand{\epsilon}{{\varepsilon}}
\renewcommand{\theta}{{\vartheta}}
\numberwithin{equation}{section}
\newtheorem{theorem}{Theorem}[section]
\newtheorem{lemma}[theorem]{Lemma}
\newtheorem{corollary}[theorem]{Corollary}
\newtheorem{proposition}[theorem]{Proposition}
\newtheorem{remark}[theorem]{Remark}
\newtheorem{definition}[theorem]{Definition}
\definecolor{bluem}{rgb}{0.57, 0.64, 0.69}
\begin{document}

\title[Ground states for fractional 
H\'enon systems]{Asymptotics of ground states for fractional H\'enon systems}

\author[D.G.\ Costa]{David G. Costa}
\author[O.H.\ Miyagaki]{Ol\'{\i}mpio H. Miyagaki}
\author[M.\ Squassina]{Marco Squassina}
\author[J.\ Yang]{Jianfu\ Yang}

\address{Department of Mathematical Sciences  
\newline \indent  
University of Nevada Las Vegas, Box 454020, USA}
\email{costa@unlv.nevada.edu}

\address{Departamento de Matem\'atica 
\newline \indent  
Universidade Federal de Juiz de Fora
\newline\indent
Juiz de Fora, CEP 36036-330, Minas Gerais, Brazil}
\email{olimpio@ufv.br}

\address{Dipartimento di Informatica 
\newline \indent 
Universit\`a degli Studi di Verona
\newline\indent
Strada Le Grazie 15, I-37134 Verona, Italy}
\email{marco.squassina@univr.it}

\address{Department of Mathematics 
\newline \indent  
Jiangxi Normal University
\newline \indent
Nanchang, Jiangxi 330022, P.R. China}
\email{jfyang2000@yahoo.com}

\thanks{O.H.\ Miyagaki was partially supported by CNPq/Brazil and  
CAPES/Brazil (Proc 2531/14-3). J.\ Yang was supported by NNSF of China, No:11271170; GAN
 PO 555 program of Jiangxi and NNSF of Jiangxi, No:2012BAB201008.
 This paper was completed while the second author was visiting the
  Department of Mathematics  of  the Rutgers University, whose hospitality he gratefully
 acknowledges. He wishes to thank Professor H. Br\'ezis for invitation.}



\subjclass[2000]{35B06, 35B09, 35J15}
\keywords{Fractional H\'enon systems, ground states, concentration phenomena, break of symmetry}

\begin{abstract}
We investigate the asymptotic behavior of positive ground states for H\'enon type  systems
involving a fractional Laplacian on a bounded domain, when the powers of the nonlinearity approach the Sobolev
critical exponent. 
\end{abstract}

\dedicatory{Dedicated to Professor Djairo G.\ de Figueiredo on the occasion of his $80^{\mbox{{\tiny{th}}}}$ birthday}

\maketitle

\section{Introduction and main results}

\noindent
Let $s\in (0,1)$, $N>2s$ and $B=\{x \in \R^N: |x|<1\}$.
Consider the fractional system of H\'enon type
\begin{equation}\label{eq:1.1}
\left\{
\begin{aligned}
(-\Delta)^s u &=\frac{2p}{p+q}|x|^\alpha u^{p-1}v^q && \text{in $B$,}\\
(-\Delta)^s v &=\frac{2q}{p+q}|x|^\alpha u^pv^{q-1} && \text{in $B$,}\\
 u>0&,\,\, v>0   && \text{in $B$,} \\
 u=&\, v=0 && \text{in $\partial B$,}
\end{aligned}
\right.
\end{equation}
where $(-\Delta)^{s}$ stands for the fractional Laplacian.
Recently, a great attention has been focused on the study of nonlinear problems involving the fractional  Laplacian,
 in view of concrete real-world applications. For instance, this type of operators arises in the thin obstacle problem, optimization, finance, phase
transitions, stratified materials, crystal dislocation, soft thin films, semipermeable membranes, flame
propagation, conservation laws, ultra-relativistic limits of quantum mechanics, quasi-geostrophic flows, multiple scattering,
minimal surfaces, materials science and water waves, see \cite{nezza}.
 In a smooth bounded domain $B \subset \R^N$, the operator $(-\Delta)^{s}$ can be defined by using the eigenvalues  $\{ \lambda_k\}$  and corresponding eigenfunctions $\{ \phi_k\}$ of the Laplace operator $-\Delta$ in $B$ with zero Dirichlet boundary values, normalized by $\|\phi_k\|_{L^2(B)}=1$, for all $k\in \N$, that is,
 $$
 - \Delta\phi_k=\lambda_k \phi_k \ \mbox{in}\ B, \qquad \phi_k=0 \ \mbox{on}\ \partial B.
 $$
 We define the space $H^{s}_{0}(B)$ by
 $$
 H^{s}_{0}(B):=\Big\{\displaystyle  u= \sum_{k=1}^{\infty} u_k
 \phi_k\, \text{ in $ L^{2}(B)$}:  \sum_{k=1}^{\infty}u^{2
}_{k} \lambda^{s}_{k}< \infty\Big\},
$$
equipped with the norm
$$
\|u\|_{H^{s}_{0}(B)}:=\Big(\sum_{k=1}^{\infty}u^{2}_{k} \lambda^{s}_{k}\Big)^{1/2}.
$$
Thus, for all $ u\in H^{s}_{0}(B)$, the fractional Laplacian $(-\Delta)^{s}$ can be defined as
$$
(-\Delta)^{s}u(x):= \sum_{k=1}^{\infty}  u_k \lambda^{s}_k\phi_{k}(x),\quad x\in B.
$$
We wish to point out that a different notion of fractional
Laplacian, available in the literature, is given by $(-\Delta)^{s}u=\F^{-1}(|\xi|^{2s}\F(u)(\xi)),$
 where $\F$ denotes the Fourier transform. This is also called the integral fractional Laplacian.
This definition, in bounded domains, is really different from the spectral one. 
In the case of the integral notion, due to the strong nonlocal character of the operator,  the Dirichlet
datum is given in $\R^N \setminus B$  and not simply on $\partial B.$
Recently, Caffarelli and Silvestre \cite{caffarelli} developed a local interpretation of the fractional
Laplacian given in $\R^N$ by considering a Dirichlet to Neumann type operator in the domain
$\{(x, t) \in \R^{N+1} : t > 0\}.$ A similar extension, in a bounded domain with
zero Dirichlet boundary condition,  was established, for instance,  by Cabr\`e and Tan in
\cite{cabretan}, Tan \cite{tan},  Capella, D\`avila, Dupaigne, and Sire \cite{capela}, and by Br$\ddot{\mbox{a}}$ndle,
Colorado, de Pablo, and S\`anchez \cite{colorado}. For any $ u \in H^{s}_{0}(B),$ the
solution $w \in H^{1}_{0,L}(C_{B})$ of 
 \begin{equation}
 \left\{ \begin{array}{rcl}
 -\div ( y^{1-2s}\nabla w)=0 & \mbox{in}& C_{B}:=B \times (0,\infty), \noindent\\
 w=0& \mbox{on}& \partial_L C_{B}:=\partial B \times (0,\infty),  \\
 w=u& \mbox{on}& B \times\{0\},\noindent
 \end{array}\right.
 \end{equation}
is called the $s$-harmonic  extension $w=E_{s}(u)$ of $u$, and it belongs to the space
$$
H^{1}_{0,L}(C_{B})=\Big\{ w \in L^{2}(C_{B}): \text{$w = 0$ on  $\partial_{L} C_{B}$}:\,\,
\int_{ C_{B}}y^{1-2s}|\nabla w|^2 dxdy <\infty\Big\}.
$$
It is proved (see \cite[Section 4.1-4.2]{colorado}) that
$$
-k_{s}\lim_{y \to 0^+} y ^{1-2s}\frac{\partial w}{\partial y}=(-\Delta)^{s}u,
$$
where $k_s=2^{1-2s}\Gamma(1-s)/\Gamma(s)$.  Here $H^{1}_{0,L}(C_{B})$ is a Hilbert space endowed with the norm
$$
\|u\|_{H^{1}_{0,L}(C_{B})}= \Big(k_{s}
\int_{ C_{B}}y^{1-2s}|\nabla w|^2 dxdy\Big)^{1/2}.
$$
In the local case, the so-called H\'enon problem
\begin{equation*}
\left\{
\begin{aligned}
-\Delta u &=|x|^\alpha u^{p-1} && \text{in $B$,}\\
\,\,\, u>0& && \text{in $B$,} \\
\,\,\, u=0&  && \text{on $\partial B$,}
\end{aligned}
\right.\leqno{(HP)}
\end{equation*}
was first studied in \cite{Ni} after being introduced by
H\'enon in \cite{henon} in connection with the research of rotating stellar structures.
This problem has been studied by several authors, e.g. \cite{smets, cao, badialeserra} and references therein.
For  this class of problems, moving plane methods \cite{Gidas} cannot be applied,
and numerical calculations \cite{numerical} suggest that the existence of non-radial solutions is in fact possible.
In \cite{cao} the authors have shown
that the maximum point $x_p$ of a ground state solution for the H\'enon equation  $(HP)$  approaches a point $x_0\in \partial B$ as
 $p \to 2^*$, where $2^*=2N/(N-2)$. This result was extended to local H\'enon type variational systems  in \cite{jianfu}, as
 well as for scalar nonlocal H\'enon type equations in \cite{chenyang}.
The main goal of this paper to get a similar result for the nonlocal H\'enon system \eqref{eq:1.1}.
We reformulate the nonlocal systems \eqref{eq:1.1} into a local system, by
using the local reduction, that is, we set
$$
 \left\{ \begin{array}{rcl}
 -\div ( y^{1-2s}\nabla w_1)=0 & \mbox{in}& C_{B}=B \times (0,\infty), \noindent\\
-\div ( y^{1-2s}\nabla w_2)=0 & \mbox{in}& C_{B}=B \times (0,\infty), \noindent\\
 w_1=w_2=0& \mbox{on}& \partial_L C_{B}=\partial B \times (0,\infty),     \\
w_1=u\geq 0& \mbox{on}& B \times \{0\},     \\
w_2=v\geq 0& \mbox{on}& B \times \{0\},     \\
 k_s y ^{1-2s}\frac{\partial w_1}{\partial \nu}=\frac{2p}{p+q}|x|^\alpha u^{p-1}v^q& \mbox{on}& B \times\{0\}, \noindent\\
k_s y ^{1-2s}\frac{\partial w_2}{\partial \nu}=\frac{2q}{p+q}|x|^\alpha u^pv^{q-1}& \mbox{on}& B \times\{0\} . \noindent
 \end{array}\right.
\leqno{(LS)}
$$
Here $u(x)=w_1(x,0)$, $v(x)=w_2(x,0)$,      and the outward normal derivative should be understood as
$$
y ^{1-2s}\frac{\partial w}{\partial \nu}=-\lim_{y\to 0^{+}} y ^{1-2s}\frac{\partial w}{\partial y}.
$$
\vskip3pt
\noindent
Let us define the space $ H:= H^{1}_{0,L}(C_{B}) \times H^{1}_{0,L}(C_{B})$ and  the functional $I:H  \to \R$,
$$
I(w_1,w_2)=\frac{k_{s}}{2}\int_{C_{B}} y^{1-2s}(|\nabla w_1|^2  + |\nabla w_2|^2) dx dy-
 \frac{2}{p+q}\int_{B}|x|^{\alpha}w_1(x,0)^{p} w_2(x,0)^{q} dx.
 $$
 A weak solution to system $(LS)$
is a vector  $(w_1,w_2) \in  H$  verifying $I'(w_1,w_2)(h,k)=0$ for all $(h,k) \in  H$,
\begin{align*}
I^{\prime}(w_1,w_2)(h,k)&=k_s\int_{C_{B}} y^{1-2s}(\nabla w_1 \cdot\nabla h  + \nabla w_2 \cdot\nabla k) \,dx dy \\
&-\frac{2p}{p+q}\int_{B}|x|^{\alpha}w_1(x,0)^{p-1} w_2(x,0)^{q} h dx
-  \frac{2q}{p+q}\int_{B}|x|^{\alpha}w_1(x,0)^{p} w_2(x,0)^{q-1}k  dx.
\end{align*}
For the nonlocal scalar problem
\begin{equation}\label{eq:1.2}
\left\{
\begin{aligned}
(-\Delta)^s u &=|x|^\alpha u^{p} && \text{in $B$,}\\
u&>0 && \text{in $B$,} \\
u&=0 && \text{in $\partial B$,}
\end{aligned}
\right.
\end{equation}
we have
$$
 \left\{ \begin{array}{rcl}
 -\div ( y^{1-2s}\nabla w)=0 & \mbox{in}& C_{B}=B \times (0,\infty),  \noindent\\
 w=0& \mbox{on}& \partial_L C_{B}=\partial B \times (0,\infty), \\
k_{s} y ^{1-2s}\frac{\partial w}{\partial \nu}=|x|^\alpha u^{p-1}& \mbox{on}& B \times\{0\}.\noindent
 \end{array}\right.
\leqno{(LE)}
$$
For this problem consider the associated minimization problem
$$
\S^{\alpha}_{s,p}(C_{B})=
\inf_{ w \in H^{1}_{0,L}(C_{B})}\frac{\displaystyle k_s\int_{C_{B}}
 y^{1-2s}|\nabla w|^2 dx dy}{\Big(\displaystyle\int_{B} |x|^{\alpha}|w(x,0)|^p dx\Big)^{2/p}}.
 $$
Then $\S^{0}_{s,2^{*}_{s}}(C_{B})$, where $2^{*}_{s}:=2N/(N-2s)$, is never achieved \cite{colorado}
and $\S^{0}_{s,2^{*}_{s}}(\R^{N+1}_{+})$ is attained by the $w$ which are the $s$-harmonic extensions of
$$
u_{\epsilon}(x) = \frac{\epsilon^{\frac{N-2s}{4}}}{(\epsilon +|x|^2)^{\frac{N-2s}{2}}},\quad \eps>0,\,\, x\in\R^N.
$$
Let $U(x) =(1+|x|^2)^{\frac{2s-N}{2}}$ and let $W$ be the extension of $U$, namely
$$
\W(x,y)=E_{s}(U)= cy^{2s}\int_{\R^N} \frac{U(z)}{(|x-z|^2 + y^2)^{\frac{N+2s}{2}}} dz.
$$
For the system  $ (LS)$ consider the following minimization problem
\begin{equation}
\label{minproblemorig}
\S^{\alpha}_{s,p,q}(C_{B}) =\inf_{ w_1,w_2 \in H^{1}_{0,L}(C_{B})}\frac{\displaystyle k_s\int_{C_{B}}
 y^{1-2s}(|\nabla w_1|^2 +|\nabla w_2|^2)dx dy}{\Big(\displaystyle\int_{B} |x|^{\alpha}|w_1(x,0)|^p |w_2(x,0)|^q dx\Big)^{\frac{2}{p+q}}}
\end{equation}
\begin{theorem}\label{existence}
For any $\alpha>0$, $\S^{\alpha}_{s,p,q}(C_{B})$ is achieved if $2< p+q< 2^{*}_{s}$ .
\end{theorem}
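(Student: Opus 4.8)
The plan is to apply the direct method of the calculus of variations, using the fact that the condition $2<p+q<2^{*}_{s}$ makes the relevant trace embedding compact, so that no concentration–compactness dichotomy has to be ruled out. First I would record that $\S^{\alpha}_{s,p,q}(C_{B})$ is a genuine positive number. Finiteness is clear by testing the quotient on any admissible pair with nontrivial traces. Positivity follows by combining the continuity of the trace operator $H^{1}_{0,L}(C_{B})\to H^{s}_{0}(B)$, the Sobolev embedding $H^{s}_{0}(B)\hookrightarrow L^{p+q}(B)$ (valid since $p+q<2^{*}_{s}$), the bound $|x|^{\alpha}\le 1$ on $B$, and the generalized H\"older inequality with exponents $\tfrac{p+q}{p}$ and $\tfrac{p+q}{q}$, which together give
\[
\int_{B}|x|^{\alpha}|w_1(x,0)|^{p}|w_2(x,0)|^{q}\,dx\le C\big(\|w_1\|_{H^{1}_{0,L}(C_{B})}^{2}+\|w_2\|_{H^{1}_{0,L}(C_{B})}^{2}\big)^{\frac{p+q}{2}}
\]
for some constant $C=C(N,s,p,q)>0$; hence $\S^{\alpha}_{s,p,q}(C_{B})>0$.

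Next, I would take a minimizing sequence $(w_1^{n},w_2^{n})\in H$ and use the $0$-homogeneity of the quotient under $(w_1,w_2)\mapsto(tw_1,tw_2)$ to normalize it so that $\int_{B}|x|^{\alpha}|w_1^{n}(x,0)|^{p}|w_2^{n}(x,0)|^{q}\,dx=1$; then $k_s\int_{C_{B}}y^{1-2s}(|\nabla w_1^{n}|^{2}+|\nabla w_2^{n}|^{2})\,dx\,dy\to\S^{\alpha}_{s,p,q}(C_{B})$, so the sequence is bounded in $H$. Passing to a subsequence, $w_i^{n}\rightharpoonup w_i$ weakly in $H^{1}_{0,L}(C_{B})$ for $i=1,2$. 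The key point is the strong convergence of the traces: by continuity of the trace operator $\Tr w_i^{n}\rightharpoonup\Tr w_i$ weakly in $H^{s}_{0}(B)$, and since $p+q<2^{*}_{s}$ the embedding $H^{s}_{0}(B)\hookrightarrow L^{p+q}(B)$ is compact, so $w_i^{n}(\cdot,0)\to w_i(\cdot,0)$ strongly in $L^{p+q}(B)$.

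Then I would pass to the limit in the constraint. Up to a further subsequence, $w_i^{n}(\cdot,0)\to w_i(\cdot,0)$ a.e.\ in $B$ and there exist $g_i\in L^{p+q}(B)$ with $|w_i^{n}(\cdot,0)|\le g_i$ a.e.; by H\"older, $|w_1^{n}(x,0)|^{p}|w_2^{n}(x,0)|^{q}\le g_1^{p}g_2^{q}\in L^{1}(B)$, so dominated convergence (the weight $|x|^{\alpha}$ being bounded) yields
\[
\int_{B}|x|^{\alpha}|w_1^{n}(x,0)|^{p}|w_2^{n}(x,0)|^{q}\,dx\to\int_{B}|x|^{\alpha}|w_1(x,0)|^{p}|w_2(x,0)|^{q}\,dx=1,
\]
so in particular $w_1\not\equiv0$ and $w_2\not\equiv0$. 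Since the Dirichlet form $(w_1,w_2)\mapsto k_s\int_{C_{B}}y^{1-2s}(|\nabla w_1|^{2}+|\nabla w_2|^{2})\,dx\,dy$ is weakly lower semicontinuous on $H$, we get $k_s\int_{C_{B}}y^{1-2s}(|\nabla w_1|^{2}+|\nabla w_2|^{2})\,dx\,dy\le\S^{\alpha}_{s,p,q}(C_{B})$, while testing the definition of $\S^{\alpha}_{s,p,q}(C_{B})$ with the admissible pair $(w_1,w_2)$ and using that its constraint equals $1$ gives the reverse inequality. Hence $(w_1,w_2)$ is a minimizer; replacing $w_i$ by $|w_i|$ changes neither integral, so the infimum is attained by a pair with nonnegative traces (one may then invoke positivity of the $s$-harmonic extension, or a strong maximum principle for $-\div(y^{1-2s}\nabla\cdot)$, to upgrade this to a positive pair if desired).

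I do not expect a serious obstacle: because $p+q$ is \emph{strictly} subcritical and $|x|^{\alpha}$ is bounded on $B$, compactness of the relevant embedding is automatic and the only mildly delicate step is the passage to the limit in the coupled term $\int_{B}|x|^{\alpha}|w_1^{n}(x,0)|^{p}|w_2^{n}(x,0)|^{q}\,dx$. This is handled by the domination argument above; alternatively one may note that $|w_1^{n}(\cdot,0)|^{p}\to|w_1(\cdot,0)|^{p}$ in $L^{(p+q)/p}(B)$ and $|w_2^{n}(\cdot,0)|^{q}\to|w_2(\cdot,0)|^{q}$ in $L^{(p+q)/q}(B)$ by continuity of the Nemytskii maps, whence the product converges in $L^{1}(B)$.
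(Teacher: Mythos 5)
Your proposal is correct and follows essentially the same route as the paper: compactness of the subcritical trace embedding $H^{1}_{0,L}(C_{B})\to L^{p+q}(|x|^{\alpha},B)$ gives strong convergence of the traces along a normalized minimizing sequence, the mixed constraint passes to the limit, and weak lower semicontinuity of the Dirichlet form closes the argument. The only surface difference is that the paper invokes Young's inequality where you use a domination/Nemytskii argument to pass to the limit in $\int_{B}|x|^{\alpha}|w_{1}^{n}|^{p}|w_{2}^{n}|^{q}\,dx$, but these are interchangeable devices for the same step, and you usefully make explicit the normalization and non-triviality of the weak limit that the paper leaves implicit.
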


\begin{proof}
Since $B$ is bounded and $\alpha>0$ we have $|x|^{\alpha} |u|^r \leq C|u|^r$.
The trace operator from $H^{1}_{0,L}(C_{B})$ to $L^r(B)$ is continuous
 if $1/r\geq 1/2- s/N,$ and compact if strict inequality holds, see \cite[Theorem 4.4]{colorado}
see also \cite{barrios,cabretan}.
Then the  trace operator $t_r:H^{1}_{0,L}(C_{B}) \to L^r(|x|^{\alpha},B)$ is compact for $r< 2N/(N- 2s).$
Taking a minimizing sequence $(w_{1,n},w_{2,n})$,  there is
$(w_1,w_2)\in H$ with $w_{i,n} \rightharpoonup w_i$,  as $n \to \infty.$ Then
\begin{align*}
& w_{1,n} \to w_1 \ \mbox{in}\ L^{p+q}(|x|^{\alpha}, B),\quad p+q < 2^{*}_{s},  \\
& w_{2,n} \to w_2 \ \mbox{in}\ L^{p+q}(|x|^{\alpha}, B),\quad p+q <2^{*}_{s}.
\end{align*}
By Young inequality we conclude that
$$
\int_{B} |x|^{\alpha}|w_{1,n}(x,0)|^p |w_{2,n}(x,0)|^q dx\to \int_{B} |x|^{\alpha}|w_1(x,0)|^p |w_2(x,0)|^q dx,
\ \mbox{as} \ n \to \infty.
$$
This implies that $\S^{\alpha}_{s,p,q}(C_{B})$ is achieved if $2< p+q< 2^{*}_{s}.$
\end{proof}

\noindent
\begin{remark}\rm
If $(w_{1,n},w_{2,n})$ is a minimizing sequence to $S^{\alpha}_{s,p,q}(C_{B})$, then it is readily seen that the sequence $(|w_{1,n}|,|w_{2,n}|)$ is
minimizing too. Thus, we can assume that the minimizer $(w_{1},w_{2})$ is non negative, that is,
 $ w_{1,n},w_{2,n}\geq 0.$ By maximum principle we have
$ w_{1,n},w_{2,n}> 0.$  Finally, invoking the
 regularity theory we infer that $ w_{1,n},w_{2,n}\in C^{\gamma}(C_{B})$, for some $\gamma \in (0,1).$
Notice that $(w_1, w_2)$ is a weak solution for $(LS)$.
Indeed, by Lagrange multiplier theorem, considering the constraint
$$
{\mathcal M}:=\Big\{(w_1,w_2) \in H: \int_{B}|x|^{\alpha}w_1(x,0)^{p} w_2(x,0)^{q} dx =1 \Big\},
$$ \noindent there exists $\lambda \in \R$  such
$$F'(w_1,w_2)(h,k)=\lambda G'(w_1,w_2)(h,k), \quad\forall (h,k) \in H,$$
where
\begin{align*}
F(w_1,w_2) &=\frac{k_s}{2}\int_{C_{B}} y^{1-2s}(|\nabla w_1|^2  + |\nabla w_2|^2) dx dy, \\
G(w_1,w_2)&=\int_{B}|x|^{\alpha}w_1(x,0)^{p} w_2(x,0)^{q} dx -1.
\end{align*}
Then, for all $(h,k) \in H,$ we have
\begin{eqnarray*}
\lefteqn{ k_s\int_{C_{B}} y^{1-2s}(\nabla w_1\cdot \nabla h  + \nabla w_2 \cdot\nabla k) dx dy}\\
&&= \lambda p\int_{B}|x|^{\alpha}w_1(x,0)^{p-1} w_2(x,0)^{q} hdx
+\lambda q\int_{B}|x|^{\alpha}w_1(x,0)^{p} w_2(x,0)^{q-1} kdx.
\end{eqnarray*}
By choosing $(h,k)=(w_1,w_2)$, we get
\begin{equation*}
 k_s\int_{C_{B}} y^{1-2s}(|\nabla w_1|^2  +|\nabla w_2|^2) dx dy =
\lambda (p+q)\int_{B}|x|^{\alpha}w_1(x,0)^{p} w_2(x,0)^{q} hdx
=\lambda (p+q).
\end{equation*}
Therefore $ \lambda >0$ and
$(\hat w_1,\hat w_2)=(\beta w_1,\beta w_2)$ with $\beta=\big(\frac{\lambda(p+q)}{2}\big)^{\frac{1}{p+q-2}}$ is a weak solution of $(LS).$
\end{remark}

\noindent
Now, we state the asymptotic behavior of ground states when $p+q \to 2^{*}_{s}.$

\begin{theorem}\label{main}
Let $\alpha >0$, $p_{\epsilon}$, $q >1$ with $ p_{\epsilon}+ q  < 2^{*}_{s}$,
$p_{\epsilon}\to p$ as $\epsilon \to 0$ and   $p+q=2^{*}_{s}.$
Let $(w_{1,\epsilon},w_{2,\epsilon})\in H$ be a solution to the minimization problem~\eqref{minproblemorig}. Then
there exists $x_0 \in \partial B$ such that
\begin{description}
 \item [i)] $k_sy^{1-2s}(|\nabla w_{1,\epsilon}|^2  +  |\nabla w_{2,\epsilon}|^2 ) \rightharpoonup \mu\delta_{(x_0,0)}$ in the sense of measure,
\item [ii)] $ |u_{1,\epsilon}|^{p_{\epsilon}}| u_{2,\epsilon}|^q  \rightharpoonup \gamma\delta_{x_0}$ in the sense of measure,
\end{description}
where $\mu>0, \gamma >0$ satisfy $ \mu \geq S \gamma^{2/2^{*}_{s}}$ and  $ \delta_{x_0}$ is the
Dirac mass at $x_0.$
\end{theorem}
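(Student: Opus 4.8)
The plan is a concentration--compactness argument of P.-L.\ Lions' type, transplanted to the Caffarelli--Silvestre extension and tailored to the H\'enon weight $|x|^\alpha$ and to the product nonlinearity $w_1^p w_2^q$. By scale invariance of the quotient in \eqref{minproblemorig} I normalize so that $\int_B|x|^\alpha w_{1,\epsilon}(x,0)^{p_\epsilon}w_{2,\epsilon}(x,0)^q\,dx=1$, whence $\|(w_{1,\epsilon},w_{2,\epsilon})\|_H^2=\S^\alpha_{s,p_\epsilon,q}(C_B)=:c_\epsilon$ (the general normalization follows since the densities in i)--ii) scale homogeneously while $\mu\ge S\gamma^{2/2^*_s}$ is scale invariant). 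Inserting as a test pair the $s$--harmonic extension of a rescaled Aubin--Talenti bubble $\lambda_\epsilon^{-(N-2s)/2}U\bigl((x-z_\epsilon)/\lambda_\epsilon\bigr)$ and an optimal multiple of it in the second component, with $z_\epsilon\to\partial B$ and $\lambda_\epsilon/\dist(z_\epsilon,\partial B)\to0$ (so the cut--off near $\partial B$ costs $o(1)$ and $|z_\epsilon|^\alpha\to1$), gives $\limsup_\epsilon c_\epsilon\le S$, where $S:=\S^0_{s,p,q}(\R^{N+1}_+)$ is the sharp constant of the critical system Sobolev inequality on $\R^{N+1}_+$; the same construction with frozen exponent, together with zero extension and $|x|^\alpha\le1$, also shows $\S^\alpha_{s,p,q}(C_B)=S$. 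In particular $(w_{1,\epsilon},w_{2,\epsilon})$ is bounded in $H$, so along a subsequence $w_{i,\epsilon}\rightharpoonup w_i$ in $H^1_{0,L}(C_B)$, $w_{i,\epsilon}(\cdot,0)\rightharpoonup w_i(\cdot,0)$ in $L^{2^*_s}(B)$ and a.e., while
$$\mu_\epsilon:=k_sy^{1-2s}\bigl(|\nabla w_{1,\epsilon}|^2+|\nabla w_{2,\epsilon}|^2\bigr)dxdy\rightharpoonup\mu,\qquad \nu_\epsilon:=|x|^\alpha w_{1,\epsilon}(x,0)^{p_\epsilon}w_{2,\epsilon}(x,0)^q dx\rightharpoonup\nu$$
weakly-$*$ as bounded measures, with $\nu(\overline B)=1$.

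Next I would invoke the fractional concentration--compactness lemma in the extension/trace form, adapted to the weight and the product: there are at most countably many $\{x_j\}\subset\overline B$ and weights $\mu_j,\nu_j>0$ with
$$\nu=|x|^\alpha w_1(x,0)^p w_2(x,0)^q dx+\sum_j\nu_j\delta_{x_j},\qquad \mu\ge k_sy^{1-2s}\bigl(|\nabla w_1|^2+|\nabla w_2|^2\bigr)dxdy+\sum_j\mu_j\delta_{(x_j,0)},$$
$$\mu_j\ge S\,|x_j|^{-2\alpha/2^*_s}\,\nu_j^{2/2^*_s}\qquad(\nu_j=0\text{ when }x_j=0).$$
The absolutely continuous part of $\nu$ is identified via the compact trace embedding $H^1_{0,L}(C_B)\hookrightarrow L^r(B)$ for $r<2^*_s$ (\cite[Theorem 4.4]{colorado}) on balls avoiding the $x_j$, combined with $p_\epsilon\to p$; the factor $|x_j|^{-2\alpha/2^*_s}$ comes from freezing $|x|^\alpha\approx|x_j|^\alpha$ near $x_j$, and at a boundary point the blow--up limit is a half of $\R^{N+1}_+$ with an extra Dirichlet face, so the constant there only increases (with $|x_j|=1$).

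Now testing $\mu$ against functions increasing to $1$ and using the decomposition, the weak--limit Sobolev inequality $\|(w_1,w_2)\|_H^2\ge S\bigl(\int_B|x|^\alpha w_1^p w_2^q\bigr)^{2/2^*_s}$, and $\mu(\overline{C_B})\le\liminf_\epsilon c_\epsilon$, I obtain
$$S\ \ge\ \liminf_\epsilon c_\epsilon\ \ge\ \mu(\overline{C_B})\ \ge\ S\Bigl(\bigl(\textstyle\int_B|x|^\alpha w_1^p w_2^q\bigr)^{2/2^*_s}+\sum_j\nu_j^{2/2^*_s}\Bigr)\ \ge\ S\Bigl(\textstyle\int_B|x|^\alpha w_1^p w_2^q+\sum_j\nu_j\Bigr)^{2/2^*_s}=S,$$
the last step by strict concavity of $t\mapsto t^{2/2^*_s}$ and $\nu(\overline B)=1$. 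So equality holds throughout, forcing exactly one of $\int_B|x|^\alpha w_1^p w_2^q,\,\{\nu_j\}$ to equal $1$ and the others to vanish, and forcing $\|(w_1,w_2)\|_H^2=0$ unless the surviving number is $\int_B|x|^\alpha w_1^p w_2^q$. But $S=\S^\alpha_{s,p,q}(C_B)$ is not attained on $C_B$: a minimizer with $\int_B|x|^\alpha w_1^p w_2^q=1$ would, extended by zero to $\R^{N+1}_+$, realize equality in the half--space inequality with $w_1(\cdot,0)w_2(\cdot,0)=0$ wherever $|x|^\alpha<1$, hence a.e.\ in $B$ --- impossible. Therefore the surviving number is a $\nu_{j_0}=:\gamma=1$, so $w_1=w_2=0$, $\nu=\gamma\delta_{x_0}$ and $\mu=S\,\delta_{(x_0,0)}$; with the notation of the statement ($\mu=\mu_{j_0}=S$, $\gamma=\nu_{j_0}=1$) this yields i)--ii), and $\mu\ge S\gamma^{2/2^*_s}$ is the concentration--compactness inequality at $x_0$ (here an equality). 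Finally, $\mu_{j_0}=S$ and the refined bound $\mu_{j_0}\ge S\,|x_0|^{-2\alpha/2^*_s}\gamma^{2/2^*_s}=S\,|x_0|^{-2\alpha/2^*_s}$ force $|x_0|\ge1$, i.e.\ $x_0\in\partial B$; and since then $|x|^\alpha\to1$ near $x_0$ while away from $x_0$ the densities $|w_{i,\epsilon}(\cdot,0)|^{2^*_s}$ cannot concentrate (else $\mu$ would carry an interior atom), the unweighted density $|u_{1,\epsilon}|^{p_\epsilon}|u_{2,\epsilon}|^q$ also tends to $\gamma\delta_{x_0}$.

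The crux is the second step --- the sharp weighted, vectorial concentration--compactness inequality in the extension setting, and especially the verification that concentration on $\partial B$ does not lower the constant, which is exactly what drives the bubble to the boundary. A lesser but unavoidable nuisance is carrying the moving exponent $p_\epsilon\to p$ through every limit (convergence $w_{i,\epsilon}^{p_\epsilon}\to w_i^p$, continuity $\S^\alpha_{s,p_\epsilon,q}(C_B)\to\S^\alpha_{s,p,q}(C_B)$), which is routine via uniform $L^{2^*_s}$ bounds, H\"older interpolation against the weight, and Vitali's theorem.
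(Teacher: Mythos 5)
Your argument is correct and follows the same concentration--compactness skeleton as the paper, but packages the H\'enon weight more elegantly. The paper applies the \emph{unweighted} Lions-type decomposition (Proposition~\ref{prop}) to the normalized sequence $\tilde w_{i,k}=C_k w_{i,k}$, rules out $I=\emptyset$ via Corollary~\ref{noachieve}, shows by super-additivity of $t\mapsto t^{2/2^*_s}$ that $I$ is a singleton and $w_1=w_2=0$, and then derives $x_0\in\partial B$ by a separate contradiction: if $x_0$ were interior, estimate~\eqref{dill} forces $\int_B|x|^\alpha w_{1,k}^{p_k}w_{2,k}^q\le\Lambda(\sigma)^{2^*_s/2}\int_B w_{1,k}^{p_k}w_{2,k}^q+o_k(1)$ with $\Lambda(\sigma)<1$, hence a strict inequality in the limiting quotient. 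You instead carry the weight into the concentration--compactness inequality itself, getting the refined local bound $\mu_j\ge S\,|x_j|^{-2\alpha/2^*_s}\nu_j^{2/2^*_s}$ for the atoms of the \emph{weighted} limit measure; once the equality chain pins $\mu_{j_0}=S$ and $\nu_{j_0}=1$, the factor $|x_0|^{-2\alpha/2^*_s}\le1$ immediately gives $|x_0|\ge1$, merging the "singleton/vanishing weak limit" step and the "boundary location" step. The refinement is a genuine corollary of the unweighted Proposition~\ref{prop}: since $|x|^\alpha$ is continuous, the weighted atom mass at $x_j$ is just $|x_j|^\alpha$ times the unweighted one, and the rest is algebra. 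Two points you should state explicitly rather than leave implicit: (1) the rescaled minimizers are \emph{tight} in the sense that no Dirichlet energy escapes as $y\to\infty$ (the paper invokes \cite[Lemma 3.6]{barrios}); Proposition~\ref{prop} is false without this. (2) The final step from the weighted limit measure to statement~(ii), which concerns the \emph{unweighted} density $|u_{1,\epsilon}|^{p_\epsilon}|u_{2,\epsilon}|^q$, deserves one more line: use $w_{i,\epsilon}(\cdot,0)\to0$ in $L^r(B)$ for $r<2^*_s$ and the unweighted concentration inequality to rule out any unweighted atom away from $x_0$, then match masses using $|x_0|^\alpha=1$.
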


\noindent
Let $(w_{1,\epsilon},w_{2,\epsilon})$ be a minimizer of  $\S^{\alpha}_{s,p_{\epsilon},q}(C_{B})$
which exists because $2< p_{\epsilon}+q <2^{*}_{s}.$ By regularity results (see e.g.\ \cite{colorado,cabre,capela}),
$(w_{1,\epsilon},w_{2,\epsilon})$ is H\"older continuous. We will show that there exists $ x_{\epsilon},\,y_{\epsilon} \in \overline{B}$ with
\begin{equation*}
M_{i,\epsilon} =w_{i,\epsilon}(x_{\epsilon},0)=\max_{(x,y)\in\overline{B}\times (0,\infty)}w_{i,\epsilon}(x,y).
\end{equation*}
Let $\lambda_{\epsilon} >0$ and $\bar\lambda_{\epsilon} >0$  be such that $\lambda_{\epsilon}^{\frac{N-2s}{2}}M_{1,\epsilon}=1$ and $\bar\lambda_{\epsilon}^{\frac{N-2s}{2}}M_{2,\epsilon}=1$,  where
$$
\lambda_{\epsilon}, \bar\lambda_{\epsilon} \to 0, \quad \mbox{as}\,\,  p_{\epsilon }+q \to 2^{*}_{s}.
$$
We  state  another description of the phenomenon exhibited in Theorem~\ref{main}.
\begin{theorem}\label{main2}
There hold
\begin{itemize}
\item [i)] $M_{1,\epsilon} = \O_\eps(1) M_{2,\epsilon}$ as $\epsilon\to 0$, hence, $\lambda_{\epsilon} = \O_\eps(1)\bar \lambda_{\epsilon}$ as $\epsilon\to 0$.
\item [ii)] $\displaystyle \dist(x_{\epsilon}, \partial B)\to 0 \ \mbox{and}\ \frac{ \dist(x_{\epsilon}, \partial B)}{\lambda_{\epsilon}}\ \to \infty\ \mbox{as}\  p_{\epsilon }+q \to 2^{*}_{s}$;
\item [iii)]$\displaystyle \lim_{ p_{\epsilon }+q \to 2^{*}_{s}}k_s\int_{C_{B}} y^{1-2s}(|\nabla \T_{1,\epsilon}|^2 +|\nabla \T_{2,\epsilon}|^2) dxdy=0$,
\end{itemize}
\end{theorem}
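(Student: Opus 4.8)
The plan is to run a blow-up analysis at the maximum points $x_\eps$ and to match it against the exact value of the infimum in \eqref{minproblemorig}; throughout, $\|\cdot\|$ denotes the norm $\big(k_s\int y^{1-2s}|\nabla\cdot|^{2}\big)^{1/2}$. Normalizing the minimizer so that $\int_{B}|x|^{\alpha}u_{1,\eps}^{p_\eps}u_{2,\eps}^{q}\,dx=1$, one has $\|w_{1,\eps}\|^{2}+\|w_{2,\eps}\|^{2}=\S^{\alpha}_{s,p_\eps,q}(C_{B})$. The first step I would isolate as a lemma: the energy identity
\begin{equation*}
\S^{\alpha}_{s,p_\eps,q}(C_{B})\longrightarrow\S:=\S^{0}_{s,p,q}(\R^{N+1}_{+})\qquad\text{as }p_\eps+q\to 2^{*}_{s},
\end{equation*}
the whole-space constant $\S$ being attained by synchronized pairs of Talenti bubbles. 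The bound $\liminf\S^{\alpha}_{s,p_\eps,q}(C_{B})\ge\S$ is contained in the inequality $\mu\ge S\gamma^{2/2^{*}_{s}}$ of Theorem~\ref{main} (with $\gamma=1$); for the reverse bound one tests \eqref{minproblemorig} with a Talenti bubble rescaled at scale $\lambda$ and centered at distance $d$ from $\partial B$, and lets $d\to0$, $\lambda/d\to0$, so that neither the zero datum on $\partial B$ nor the weight $|x|^{\alpha}$ (which tends to $1$ on the support of the bubble) produces a loss. Then I introduce the blow-up
\begin{equation*}
V_{i,\eps}(z):=\lambda_\eps^{\frac{N-2s}{2}}\,w_{i,\eps}\big((x_\eps,0)+\lambda_\eps z\big),\qquad z\in C_{B}^{\eps}:=\tfrac1{\lambda_\eps}\big((B-x_\eps)\times(0,\infty)\big),
\end{equation*}
extended by $0$ off $C_{B}^{\eps}$, so that $0\le V_{1,\eps}\le V_{1,\eps}(0,0)=1=\max V_{1,\eps}$, $V_{2,\eps}(0,0)=(\lambda_\eps/\bar\lambda_\eps)^{\frac{N-2s}{2}}=\max V_{2,\eps}$, $\|V_{i,\eps}\|=\|w_{i,\eps}\|$, and $(V_{1,\eps},V_{2,\eps})$ solves $(LS)$ on $C_{B}^{\eps}$ with exponents $(p_\eps,q)$ and weight $|x_\eps+\lambda_\eps x|^{\alpha}$. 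By Theorem~\ref{main}, along a subsequence $x_\eps\to x_0\in\partial B$, so the weight tends to $|x_0|^{\alpha}=1$ locally uniformly and $\dist(x_\eps,\partial B)\to0$: this is the first assertion in (ii).

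For (i): since $p_\eps+q<2^{*}_{s}$ and $p_\eps,q>1$, the exponents $p_\eps-1$ and $q-1$ are \emph{strictly} below $2^{*}_{s}-1$. Writing $v_{i,\eps}:=V_{i,\eps}(\cdot,0)$, on each ball $B_R$ we get $(-\Delta)^{s}v_{2,\eps}\le C_R\,v_{2,\eps}^{q-1}$ (using $v_{1,\eps}\le1$ and $|x_\eps+\lambda_\eps x|^{\alpha}\le C$), while the trace Sobolev inequality gives $\|v_{2,\eps}\|_{L^{2^{*}_{s}}}^{2}\le C\|V_{2,\eps}\|^{2}=C\|w_{2,\eps}\|^{2}\le C\,\S^{\alpha}_{s,p_\eps,q}(C_{B})\le C'$; a Brezis--Kato/Moser bootstrap for the extension problem then bounds $v_{2,\eps}(0)=(\lambda_\eps/\bar\lambda_\eps)^{\frac{N-2s}{2}}$ uniformly in $\eps$. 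Rescaling instead by $\bar\lambda_\eps$, still centered at the common maximum point $x_\eps$, bounds $(\bar\lambda_\eps/\lambda_\eps)^{\frac{N-2s}{2}}$ in the same way. Hence $\lambda_\eps\asymp\bar\lambda_\eps$, that is $M_{1,\eps}=\O_\eps(1)M_{2,\eps}$ and $\lambda_\eps=\O_\eps(1)\bar\lambda_\eps$.

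For the remaining part of (ii) I argue by contradiction. If $\dist(x_\eps,\partial B)/\lambda_\eps\to L<\infty$ along a subsequence, then $C_{B}^{\eps}$ converges to the half-cylinder over a half-space $H_L\subset\R^{N}$, on whose lateral boundary the $V_{i,\eps}$ vanish. By (i) and the uniform local $L^{\infty}$ bounds above, $(V_{1,\eps},V_{2,\eps})$ converges — locally uniformly on the traces, so the limit retains the value $1$ at the origin and is \emph{nontrivial}, and weakly for $\|\cdot\|$ — to a nonnegative finite-energy weak solution of the \emph{critical} system $(LS)$ ($p+q=2^{*}_{s}$, unit weight) on that half-cylinder, with zero Dirichlet datum on its lateral boundary. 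A Pohozaev identity for this system on a half-space forces that limit to vanish identically, a contradiction. Therefore $L=\infty$, i.e.\ $\dist(x_\eps,\partial B)/\lambda_\eps\to\infty$, and $C_{B}^{\eps}\to\R^{N+1}_{+}$.

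Finally, for (iii): by the previous step the limiting domain is $\R^{N+1}_{+}$, and $(V_{1,\eps},V_{2,\eps})\rightharpoonup(V_1,V_2)$, a nonnegative finite-energy solution of $(LS)$ on $\R^{N+1}_{+}$ with unit weight and $V_1(0,0)=1=\max V_1(\cdot,0)$. Since $\S^{\alpha}_{s,p_\eps,q}(C_{B})\to\S$, the energy is precisely that of a single bubble, so no mass of the nonlinear term leaks away from the scale $\lambda_\eps$; consequently $\int_{\R^{N}}V_1(x,0)^{p}V_2(x,0)^{q}\,dx=\lim_{R\to\infty}\lim_{\eps\to0}\int_{B_{R\lambda_\eps}(x_\eps)}|x|^{\alpha}u_{1,\eps}^{p_\eps}u_{2,\eps}^{q}\,dx=1$, so $(V_1,V_2)$ is admissible for $\S$; as its energy is $\le\liminf\S^{\alpha}_{s,p_\eps,q}(C_{B})=\S$, it is a minimizer. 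Since moreover $\|V_{1,\eps}\|^{2}+\|V_{2,\eps}\|^{2}\to\S=\|V_1\|^{2}+\|V_2\|^{2}$ while $\liminf\|V_{i,\eps}\|^{2}\ge\|V_i\|^{2}$ for each $i$, we get $\|V_{i,\eps}\|\to\|V_i\|$ and hence $V_{i,\eps}\to V_i$ \emph{strongly}. As $\T_{i,\eps}$ is, up to the above rescaling, the difference between $w_{i,\eps}$ and the projection into $C_B$ of the bubble it concentrates to — and that projection, by (ii), differs from the bubble itself only by a term of energy $O\big((\lambda_\eps/\dist(x_\eps,\partial B))^{N-2s}\big)=o(1)$ — the strong convergence gives
\begin{equation*}
k_{s}\int_{C_{B}}y^{1-2s}\big(|\nabla\T_{1,\eps}|^{2}+|\nabla\T_{2,\eps}|^{2}\big)\,dx\,dy=\sum_{i=1,2}\|V_{i,\eps}-V_i\|^{2}+o(1)\longrightarrow 0 .
\end{equation*}
The genuine obstacle is the second half of (ii) — ruling out a boundary-touching concentration — which hinges on the nonexistence, via Pohozaev, of nontrivial finite-energy solutions of the limiting critical system on a half-space; everything else is blow-up bookkeeping together with the Brezis--Kato and Brezis--Lieb type estimates indicated above.
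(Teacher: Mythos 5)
Your overall strategy — blow up at the maximum, identify the limit profile, rule out finite-distance concentration via a half-space Pohozaev nonexistence result — coincides with the paper's, and your detours for (i) and (iii) are genuinely different routes. For (i) you replace the paper's argument (which shows that the rescaled second component has a nonzero limit, by ruling out $\tilde w_{2}\equiv 0$ and $h(0)<1$) by a Brezis--Kato/Moser bootstrap that bounds $(\bar\lambda_\eps/\lambda_\eps)^{(N-2s)/2}$ directly. That works, but the phrase ``common maximum point'' is a slip: the maxima of $w_{1,\eps}$ and $w_{2,\eps}$ need not coincide. What saves your argument is that you only need the rescaling centered at the maximum of $w_{1}$, together with the \emph{global} bound $\bar V_{2,\eps}\le 1$; the identity $V_{2,\eps}(0,0)=(\lambda_\eps/\bar\lambda_\eps)^{(N-2s)/2}$ is in general false and should be dropped.

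The genuine gap is in (iii). Two of your key assertions are precisely what remains to be proved, not facts that can be read off: first, that the limit system on $\R^{N+1}_{+}$ has ``unit weight'' — i.e.\ that the factor $h(0)=\lim\lambda_\eps^{N-\frac{N-2s}{2}(p_\eps+q)}$ equals $1$; and second, that the whole normalization mass is captured at scale $\lambda_\eps$, i.e.\ $\int_{\R^{N}} V_{1}^{p}V_{2}^{q}\,dx=1$. Your justification — ``the energy is precisely that of a single bubble, so no mass of the nonlinear term leaks away from the scale $\lambda_\eps$'' — is the conclusion restated, and the chain ``$(V_1,V_2)$ admissible $\Rightarrow$ minimizer $\Rightarrow$ norm convergence $\Rightarrow$ strong convergence'' runs in a circle. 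The paper closes exactly this loop differently: it first shows $h(0)=1$ by a scaling/chain inequality (a nontrivial finite-energy solution of the limit system with coefficient $\Lambda<1$ would have energy strictly below $\S^{N/(2s)}/2^{(N-2s)/(2s)}$ after rescaling, contradicting weak lower semicontinuity and \eqref{limit-norm}), and then obtains (iii) by a Br\'ezis--Lieb decomposition of the energy functional plus a contradiction ($\rho=0$), never needing to know \emph{a priori} that no mass leaks. A correct version of your shortcut would be: since $(V_1,V_2)$ is a nontrivial finite-energy solution of the limiting system with some coefficient $\Lambda\in[0,1]$, testing plus Sobolev force $\|V_1\|^2+\|V_2\|^2\ge\S^{N/(2s)}/(2\Lambda)^{(N-2s)/(2s)}\ge\S^{N/(2s)}/2^{(N-2s)/(2s)}$; combined with weak lower semicontinuity and \eqref{limit-norm} this simultaneously yields $\Lambda=1$ and norm convergence. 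That argument requires using the Lagrange-normalized minimizer (so that the Poho\v zaev identity $\|V\|^{2}=2\Lambda\int V_1^{p}V_2^{q}$ holds) rather than the constraint normalization $\int|x|^{\alpha}u_1^{p_\eps}u_2^{q}=1$ you adopted at the start, so the writeup would need to be adjusted accordingly.

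Two smaller issues: the ``$O((\lambda_\eps/\dist(x_\eps,\partial B))^{N-2s})$'' tail estimate and the identification $V_i=a\W,\,b\W$ via Lemma~\ref{lem:0.2} are needed and you cite them correctly, but note that the statement of Theorem~\ref{main2} as printed uses $\W$ itself in $\T_{i,\eps}$, while the limit really carries the weights $a,b$ with $a/b=\sqrt{p/q}$ — you handle this correctly, which the paper's statement glosses over.
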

where we have set
$\T_{i,\epsilon}(x,y):= w_{1,\epsilon}(x,y)-\lambda_{\epsilon}^{\frac{2s-N}{2}} \W\big(\frac{x-x_{\epsilon}}{\lambda_{\epsilon}},
\frac{y}{\lambda_{\epsilon}}\big),$ for $i=1,2$.

\section{Preliminaries}
\label{prelim}
\noindent
For any $u_{i} \in H^{s}_{0}(B)$, there is a unique extension $w_{i}=E_s(u_{i}) \in
 H^{1}_{0,L}(C_{B})$ of $u_{i}$. The extension operator is an isometry between  $H^{s}_{0}(B)$ and $H^{1}_{0,L}(C_{B})$, that is
 (see \cite{barrios,colorado,chenyang})
$$
\|E_s(u_{i})\|_{ H^{1}_{0,L}(C_{B})}= \|u_{i}\|_{H^{s}_{0}(B)}, \quad i=1,2.
$$
Let us set
$$
x_0:=\Big( 1-\frac{1}{|\ln  \epsilon|},0, \ldots,0\Big) \in \R^N,
\,\,\quad z_0:=(x_0,0)\in \R^{N+1}.
$$
Let us denote $B_{\rho}:=\{x\in\R^N: |x-x_0|<\rho\}$ and
\begin{equation*}
 \mathbb{A}_{\rho}:=\{ (x,y)\in\R^{N+1}:  |(x,y)-z_0|< \rho\}, \quad
 \mathbb{B}_\rho:=\{(x,y)\in\R^{N+1}: |(x,y)| < \rho\}.
\end{equation*}
Let $\phi\in C^{\infty}_{0}(C_{B})$ be a cut-off function satisfying
$$
\phi(x,y):=\left\{ \begin{array}{ccr} 1&\mbox{if} &(x,y) \in \mathbb{A}_{\frac{1}{2|\ln  \epsilon|}}\\  0& \mbox{if} & (x,y) \not \in
\mathbb{A}_{\frac{1}{|\ln  \epsilon|}}, \end{array} \right.
$$
with $0\leq \phi(x,y)\leq 1$ and $ |\nabla \phi(x,y)|\leq C|\ln \epsilon|,$  for $(x,y) \in C_{B}.$
If $\W$ is the extension of the function $U$ previously introduced, we have (see \cite{barrios})
$|\nabla \W(x,y)|\leq Cy^{-1}\W(x,y)$,  for $(x,y) \in \R^{N+1}_{+}.$
The extension of $U_{\epsilon}(x) = (\epsilon +|x|^2)^{(2s-N)/2}$ has the form
$$
\W_{\epsilon}(x,y)=\epsilon^{\frac{2s-N}{2} } \W\Big(\frac{x-x_0}{\sqrt{\epsilon}}, \frac{y}{\sqrt{\epsilon}}\Big),\,\,\,\quad \epsilon >0.
$$
Notice that $\phi \W_{\epsilon} \in  H^{1}_{0,L}(C_{B})$ for $\eps$ small enough.
The following lemma is proved in \cite[Lemma 3.1]{chenyang}

\begin{lemma}
\label{stima1}
 There holds
$$
\frac{\displaystyle\int_{C_{B}}
k_s y^{1-2s}|\nabla(\phi  \W_{\epsilon})|^2 dx dy}{\Big(\displaystyle\int_{B} |x|^{\alpha}|\phi(x,0)\W_{\epsilon}(x,0)|^p dx\Big)^{2/p}}=\S^{0}_{s,2^{*}_{s}}(C_B)+
o_\epsilon(1),
$$
as $ p \to 2^{*}_{s},$ and $ \epsilon \to 0.$
\end{lemma}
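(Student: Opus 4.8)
The plan is to estimate separately the numerator and denominator of the quotient, in each case reducing to the scale-invariant Sobolev quotient of $\W$ on $\R^{N+1}_{+}$. Recall from above that $\W_{\epsilon}$ is $\epsilon^{\frac{2s-N}{2}}$ times a rescaled translate of the extremal $\W$ of $\S^{0}_{s,2^{*}_{s}}(\R^{N+1}_{+})$, whose value equals $\S^{0}_{s,2^{*}_{s}}(C_{B})$. Everything rests on the hierarchy of scales: the concentration scale $\sqrt\epsilon$ of $\W_{\epsilon}$ is infinitesimal with respect to $\dist(x_{0},\partial B)=1/|\ln\epsilon|$ and to the cut-off radius, and, crucially, $\sqrt\epsilon\,|\ln\epsilon|\to0$ as $\epsilon\to0$, so that any fixed power of $|\ln\epsilon|$ is absorbed. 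Besides the bound $|\nabla\W(x,y)|\le Cy^{-1}\W(x,y)$ recalled above, I would use the well-known decay $\W(x,y)\le C(1+|(x,y)|)^{2s-N}$ coming from the Poisson-type representation of $\W$.

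For the numerator I would expand $|\nabla(\phi\W_{\epsilon})|^{2}=\phi^{2}|\nabla\W_{\epsilon}|^{2}+2\phi\W_{\epsilon}\,\nabla\W_{\epsilon}\!\cdot\!\nabla\phi+\W_{\epsilon}^{2}|\nabla\phi|^{2}$. The leading term satisfies $k_{s}\int_{C_{B}}y^{1-2s}\phi^{2}|\nabla\W_{\epsilon}|^{2}\le k_{s}\int_{\R^{N+1}_{+}}y^{1-2s}|\nabla\W_{\epsilon}|^{2}=\epsilon^{-\frac{N-2s}{2}}k_{s}\int_{\R^{N+1}_{+}}y^{1-2s}|\nabla\W|^{2}$ after the substitution $(x,y)=(x_{0},0)+\sqrt\epsilon(z,t)$, while the part of that integral outside $\mathbb{A}_{1/|\ln\epsilon|}$, being a tail of an integrable function, is $o_{\epsilon}(\epsilon^{-\frac{N-2s}{2}})$; with the matching lower bound provided by $\phi\equiv1$ on $\mathbb{A}_{1/(2|\ln\epsilon|)}$, the leading term equals $\epsilon^{-\frac{N-2s}{2}}\big(k_{s}\int_{\R^{N+1}_{+}}y^{1-2s}|\nabla\W|^{2}+o_{\epsilon}(1)\big)$. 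The $|\nabla\phi|^{2}$-term is carried by the annulus $\mathbb{A}_{1/|\ln\epsilon|}\setminus\mathbb{A}_{1/(2|\ln\epsilon|)}$, where $|\nabla\phi|\le C|\ln\epsilon|$ and $\W_{\epsilon}(x,y)\le C|(x,y)-(x_{0},0)|^{2s-N}$ (since $|(x,y)-(x_{0},0)|\gg\sqrt\epsilon$ there); integrating $y^{1-2s}$ in polar coordinates about $(x_{0},0)$ bounds it by $C|\ln\epsilon|^{\max\{N-2s,\,2\}}$, which is $o_{\epsilon}(\epsilon^{-\frac{N-2s}{2}})$ because $\epsilon^{\frac{N-2s}{2}}|\ln\epsilon|^{\max\{N-2s,\,2\}}\to0$; the cross term is then $o_{\epsilon}(\epsilon^{-\frac{N-2s}{2}})$ by Cauchy--Schwarz. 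Hence the numerator is $\epsilon^{-\frac{N-2s}{2}}\big(k_{s}\int_{\R^{N+1}_{+}}y^{1-2s}|\nabla\W|^{2}+o_{\epsilon}(1)\big)$.

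For the denominator, $\W_{\epsilon}(x,0)=(\epsilon+|x-x_{0}|^{2})^{\frac{2s-N}{2}}=:U_{\epsilon}(x)$, and $\phi(\cdot,0)U_{\epsilon}$ is supported in $B$ for $\epsilon$ small. I would write $|x|^{\alpha}=|x_{0}|^{\alpha}+\big(|x|^{\alpha}-|x_{0}|^{\alpha}\big)$: since $|x_{0}|=1-1/|\ln\epsilon|$ stays away from $0$, the map $x\mapsto|x|^{\alpha}$ is Lipschitz on $\operatorname{supp}\phi(\cdot,0)$, so that $\bigl||x|^{\alpha}-|x_{0}|^{\alpha}\bigr|\le C|x-x_{0}|$ there, while $|x_{0}|^{\alpha}=(1-1/|\ln\epsilon|)^{\alpha}=1+o_{\epsilon}(1)$. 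Rescaling and using the finite moments $\int_{\R^{N}}U^{p}$ and $\int_{\R^{N}}|z|\,U^{p}$ (both finite for $p$ near $2^{*}_{s}$) together with the vanishing tail $\int_{|z|>1/(2\sqrt\epsilon|\ln\epsilon|)}U^{p}\to0$, one obtains $\int_{B}|x|^{\alpha}|\phi(x,0)\W_{\epsilon}(x,0)|^{p}\,dx=(1+o_{\epsilon}(1))\,\epsilon^{\frac{(2s-N)p}{2}+\frac{N}{2}}\int_{\R^{N}}U^{p}$; raising to the power $2/p$ and using $\big(\int_{\R^{N}}U^{p}\big)^{2/p}\to\big(\int_{\R^{N}}U^{2^{*}_{s}}\big)^{2/2^{*}_{s}}$ as $p\to2^{*}_{s}$, the denominator equals $(1+o_{\epsilon}(1))(1+o_{p}(1))\,\epsilon^{(2s-N)+\frac{N}{p}}\big(\int_{\R^{N}}U^{2^{*}_{s}}\big)^{2/2^{*}_{s}}$.

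Dividing the two expressions, the powers of $\epsilon$ collapse to $\epsilon^{\sigma(p)}$ with $\sigma(p):=\frac{N-2s}{2}-\frac{N}{p}\to0$ as $p\to2^{*}_{s}$, and the remaining constant is precisely $\S^{0}_{s,2^{*}_{s}}(\R^{N+1}_{+})$ --- by definition of the latter as the value of the quotient at its extremal $\W$; thus the quotient equals $\epsilon^{\sigma(p)}\big(\S^{0}_{s,2^{*}_{s}}(\R^{N+1}_{+})+o_{p}(1)\big)(1+o_{\epsilon}(1))$. Letting first $p\to2^{*}_{s}$, so that $\epsilon^{\sigma(p)}\to1$, and then $\epsilon\to0$, the quotient tends to $\S^{0}_{s,2^{*}_{s}}(\R^{N+1}_{+})=\S^{0}_{s,2^{*}_{s}}(C_{B})$, which is the assertion. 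The hard part will be the bookkeeping: three length scales interact, and several of the error integrals individually diverge like powers of $|\ln\epsilon|$, so at each step one must verify that they stay negligible against the correspondingly diverging principal terms --- which is exactly what $\sqrt\epsilon\,|\ln\epsilon|\to0$ guarantees.
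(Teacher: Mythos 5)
The paper offers no proof of this lemma at all: it simply cites \cite[Lemma 3.1]{chenyang}, so there is nothing in the text to compare you against. Your argument is the standard test-function computation one would expect there, and the key estimates check out: the numerator rescaling $k_s\int y^{1-2s}\phi^2|\nabla\W_\eps|^2 = \eps^{-(N-2s)/2}\big(k_s\int_{\R^{N+1}_+}t^{1-2s}|\nabla\W|^2 + o_\eps(1)\big)$ with tails controlled by $\sqrt{\eps}\,|\ln\eps|\to 0$; the annulus estimate for the $|\nabla\phi|^2$-term (your exponent $\max\{N-2s,2\}$ is somewhat generous, a direct polar computation about $z_0$ gives $|\ln\eps|^{N-2s}$, but both are $o(\eps^{-(N-2s)/2})$); the Cauchy--Schwarz bound on the cross term; the denominator factorization $|x|^\alpha = |x_0|^\alpha + O(|x-x_0|)$ with $|x_0|^\alpha = 1+o_\eps(1)$ and the rescaled $\int U^p$; and the resulting exponent $\sigma(p)=(N-2s)/2 - N/p$. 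The identification of the limiting quotient with $\S^0_{s,2^*_s}(\R^{N+1}_+)=\S^0_{s,2^*_s}(C_B)$, using that $\W$ is the extremal, is also correct.

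The one point worth surfacing explicitly concerns the factor $\eps^{\sigma(p)}$. Since $\sigma(p)<0$ for $p<2^*_s$, one has $\eps^{\sigma(p)}=e^{|\sigma(p)||\ln\eps|}\geq 1$, and this converges to $1$ only under the rate constraint $|\sigma(p)|\,|\ln\eps|\to 0$, i.e.\ $|p-2^*_s|=o(1/|\ln\eps|)$. You sidestep this by taking the iterated limit (first $p\to 2^*_s$ with $\eps$ fixed, then $\eps\to 0$), which is a legitimate reading of the lemma's ``as $p\to 2^*_s$ and $\eps\to 0$''. But in the way the lemma is actually invoked at~\eqref{3.16}, the same $\eps$ parametrizes both $p_\eps$ and the test function $\W_\eps$, which is a joint limit; there the lemma is only true under the above (unstated) rate constraint, or else one must decouple the two parameters and choose the test-function scale separately (for each $\eps$, pick $\eps'$ with $|\sigma(p_\eps)||\ln\eps'|\to 0$). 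This is a genuine imprecision in the paper's formulation rather than a flaw in your argument, but since your quantitative expansion makes the obstruction visible, it is worth saying explicitly rather than relegating it to a remark about ``bookkeeping.'' One further small caveat: the decay $\W(x,y)\leq C(1+|(x,y)|)^{2s-N}$ that you invoke is true but is not among the facts the paper records (it states only $|\nabla\W|\leq Cy^{-1}\W$); it should be derived from the Poisson kernel representation of $E_s$, as you indicate.
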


\vskip2pt
\noindent
A minimizer of  $\S^{\alpha}_{s,p,q}(C_{B})$ exists as
 $2< p+q <2^{*}_{s}$
and arguing as in \cite[Theorem 5]{alves} we have
\begin{equation}\label{identidade}
\S^{\alpha}_{s,p,q}(C_{B})=
C_{p,q}\, \S^\alpha_{s,p+q}(C_{B}),
\,\,\,\quad
C_{p,q}:=\Big[\Big(\frac{p}{q}\Big)^{\frac{q}{p+q}}+
\Big(\frac{p}{q}\Big)^{-\frac{p}{p+q}}\Big],
\end{equation}
where we have set
$$
\S^{\alpha}_{s,p+q}(C_{B}) := \inf_{ w \in H^{1}_{0,L}(C_{B})}\frac{\displaystyle\int_{C_{B}}
k_s y^{1-2s}|\nabla w|^2 dx dy}{\Big(\displaystyle\int_{B} |x|^{\alpha}|w(x,0)|^{p+q} dx\Big)^{2/(p+q)}}.
$$
In particular
$$
\S_{s,p,q}(C_{B}):=\S^{0}_{s,p,q}(C_{B})=C_{p,q}\S^{0}_{s,p+q}(C_{B})=
C_{p,q}\S_{p+q}(C_{B}).
$$
Furthermore, if $w_0$ realizes $\S^{\alpha}_{s,p+q}(C_{B})$ then $(u_0,v_0)=(Bw_0,Cw_0)$ realizes
 $\S^{\alpha}_{s,p,q}(C_{B}),$
for
$$
B,C>0,\quad
B=\sqrt{p/q}\,C.
$$
Setting $\hat u_{\epsilon}=\sqrt{p_{\epsilon}}\phi \W_{\epsilon}$ and $\hat v_{\epsilon}=\sqrt{q}\phi \W_{\epsilon}$
and applying identity \eqref{identidade},  we have
\begin{align}
\label{3.16}
\frac{\displaystyle\int_{C_{B}} k_s y^{1-2s}(|\nabla \hat u_{\epsilon}|^2 +|\nabla
 \hat v_{\epsilon}|^2)dx dy}{\Big(\displaystyle\int_{B} |x|^{\alpha}|\hat u_{\epsilon}(x,0)|^{p_{\epsilon}}
 |\hat v_{\epsilon}(x,0)|^q dx\Big)^{2/(p_{\epsilon}+q)}}
 & =C_{p_{\epsilon},q}\frac{\displaystyle\int_{C_{B}}
k_s y^{1-2s}|\nabla(\phi \W_{\epsilon})|^2 dx dy}{\Big(\displaystyle\int_{B} |x|^{\alpha}|\phi(x,0)\W_{\epsilon}(x,0)|^{p_{\epsilon}+q} dx\Big)^{2/(p_{\epsilon}+q)}} \\
&= C_{p_{\epsilon},q}\S^{0}_{s,2^{*}_{s}}(C_B)+o_\epsilon(1),  \notag
\end{align}
as $ p_{\epsilon}+q \to 2^{*}_{s}$ for $ \epsilon \to 0.$
Following \cite[Lemma 3.2]{chenyang},  we have

\begin{lemma}\label{lemma3.2}
Let $(u_{\epsilon},v_{\epsilon})$ be a minimizer of
$\S^{\alpha}_{s,p_{\epsilon},q}(C_{B})$ and $p_{\epsilon} + q \to 2^{*}_{s}$ for $\epsilon \to 0$. Then we have
\begin{align*}
\lim_{\eps\to 0}\frac{\displaystyle\int_{C_{B}} k_s y^{1-2s}(|\nabla u_{\epsilon}|^2 + |\nabla v_{\epsilon}|^2)dx dy}{\Big(\displaystyle\int_{B} |x|^{\alpha}|u_{\epsilon}(x,0)|^{p_{\epsilon}} |v_{\epsilon}(x,0)|^q dx\Big)^{2/(p_{\epsilon}+q)}} = C_{p,q}\S^{0}_{s,2^{*}_{s}}(C_B)=C_{p,q}\S^{0}_{s,2^{*}_{s}}(\R^{N+1}_{+}),  \\
\lim_{\eps\to 0}\displaystyle \frac{\displaystyle\int_{C_{B}} k_s y^{1-2s}(|\nabla u_{\epsilon}|^2 + |\nabla v_{\epsilon}|^2)dx dy}{\Big(\displaystyle\int_{B}
 |u_{\epsilon}(x,0)|^{p_{\epsilon}} |v_{\epsilon}(x,0)|^q dx\Big)^{2/(p_{\epsilon}+q) }} = C_{p,q}\S^{0}_{s,2^{*}_{s}}(C_B)=C_{p,q}\S^{0}_{s,2^{*}_{s}}(\R^{N+1}_{+}).
\end{align*}
\end{lemma}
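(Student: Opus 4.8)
The plan is to obtain both equalities by a squeeze argument, bounding each quotient above and below by $C_{p_{\eps},q}$ times, respectively, the critical constant and a scalar subcritical constant, and then letting $\eps\to0$. Since $(u_{\eps},v_{\eps})$ minimizes \eqref{minproblemorig}, the first quotient in the statement equals $\S^{\alpha}_{s,p_{\eps},q}(C_{B})$, so I would start from the upper bound already contained in \eqref{3.16}: testing with $\hat u_{\eps}=\sqrt{p_{\eps}}\,\phi\W_{\eps}$, $\hat v_{\eps}=\sqrt{q}\,\phi\W_{\eps}$ and invoking Lemma~\ref{stima1} yields
\[
\S^{\alpha}_{s,p_{\eps},q}(C_{B})\le C_{p_{\eps},q}\,\S^{0}_{s,2^{*}_{s}}(C_{B})+o_{\eps}(1).
\]
Because $C_{p_{\eps},q}\to C_{p,q}$ as $p_{\eps}\to p$, this gives $\limsup_{\eps\to0}\S^{\alpha}_{s,p_{\eps},q}(C_{B})\le C_{p,q}\,\S^{0}_{s,2^{*}_{s}}(C_{B})$.

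For the matching lower bound I would exploit that $B$ is the unit ball, so $|x|^{\alpha}\le1$ on $B$: for every $w\in H^{1}_{0,L}(C_{B})$ and every $r\in(2,2^{*}_{s})$, Hölder's inequality with exponents $2^{*}_{s}/r$ and $2^{*}_{s}/(2^{*}_{s}-r)$ gives
\[
\Big(\int_{B}|x|^{\alpha}|w(x,0)|^{r}\,dx\Big)^{2/r}\le\Big(\int_{B}|w(x,0)|^{r}\,dx\Big)^{2/r}\le|B|^{\frac{2}{r}-\frac{2}{2^{*}_{s}}}\Big(\int_{B}|w(x,0)|^{2^{*}_{s}}\,dx\Big)^{2/2^{*}_{s}}.
\]
Dividing the (common) numerator of the scalar quotient by this bound and passing to the infimum over $w$ yields $\S^{\alpha}_{s,r}(C_{B})\ge|B|^{\frac{2}{2^{*}_{s}}-\frac{2}{r}}\,\S^{0}_{s,2^{*}_{s}}(C_{B})$; applied with $r=p_{\eps}+q$, and using $|B|^{\frac{2}{2^{*}_{s}}-\frac{2}{p_{\eps}+q}}\to1$, this gives $\liminf_{\eps\to0}\S^{\alpha}_{s,p_{\eps}+q}(C_{B})\ge\S^{0}_{s,2^{*}_{s}}(C_{B})$. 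Plugging this into the identity \eqref{identidade}, namely $\S^{\alpha}_{s,p_{\eps},q}(C_{B})=C_{p_{\eps},q}\,\S^{\alpha}_{s,p_{\eps}+q}(C_{B})$, and using once more $C_{p_{\eps},q}\to C_{p,q}$, I obtain $\liminf_{\eps\to0}\S^{\alpha}_{s,p_{\eps},q}(C_{B})\ge C_{p,q}\,\S^{0}_{s,2^{*}_{s}}(C_{B})$. Combined with the upper bound, this proves the first equality; the last equality $\S^{0}_{s,2^{*}_{s}}(C_{B})=\S^{0}_{s,2^{*}_{s}}(\R^{N+1}_{+})$ is the standard fact recalled before \eqref{minproblemorig} (see \cite{colorado}).

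It remains to treat the quotient with the unit weight,
\[
R_{\eps}:=\frac{k_{s}\int_{C_{B}}y^{1-2s}(|\nabla u_{\eps}|^{2}+|\nabla v_{\eps}|^{2})\,dxdy}{\big(\int_{B}|u_{\eps}(x,0)|^{p_{\eps}}|v_{\eps}(x,0)|^{q}\,dx\big)^{2/(p_{\eps}+q)}}.
\]
I would squeeze again: by definition of the infimum, $R_{\eps}\ge\S^{0}_{s,p_{\eps},q}(C_{B})=C_{p_{\eps},q}\,\S^{0}_{s,p_{\eps}+q}(C_{B})$, so the Hölder estimate above with $\alpha=0$ gives $\liminf_{\eps\to0}R_{\eps}\ge C_{p,q}\,\S^{0}_{s,2^{*}_{s}}(C_{B})$; conversely, since $|x|^{\alpha}\le1$ the unweighted denominator dominates the weighted one, whence $R_{\eps}\le\S^{\alpha}_{s,p_{\eps},q}(C_{B})$ and thus $\limsup_{\eps\to0}R_{\eps}\le C_{p,q}\,\S^{0}_{s,2^{*}_{s}}(C_{B})$ by the first part. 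This yields the second equality. The only point requiring genuine care is the lower semicontinuity of the Sobolev quotients as $p_{\eps}+q\uparrow2^{*}_{s}$, i.e.\ that no concentration ``mass'' is lost in passing from the exponent $p_{\eps}+q$ to $2^{*}_{s}$; this is exactly what the elementary Hölder inequality above supplies, so the remainder is bookkeeping built on Lemma~\ref{stima1}, \eqref{3.16} and \eqref{identidade}.
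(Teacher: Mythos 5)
Your proof is correct and takes essentially the same route as the paper: a two-sided squeeze built from the test-function upper bound in \eqref{3.16}/Lemma~\ref{stima1}, the identity \eqref{identidade}, H\"older's inequality for the subcritical exponent, and the observation $|x|^{\alpha}\leq 1$ on $B$ to pass between weighted and unweighted quotients. One small point in your favour: you keep the H\"older constant $|B|^{2/2^*_s-2/r}\to 1$ explicit, whereas the paper states $\S^{0}_{s,p_\epsilon+q}(C_B)\geq\S^{0}_{s,2^*_s}(C_B)$ outright, which is literally true only when $|B|\leq 1$ but is harmless in the limit.
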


\begin{proof}
We already know that $\S^{0}_{s,2^{*}_{s}}(C_B)=\S^{0}_{s,2^{*}_{s}}(\R^{N+1}_{+})$. Notice that, by \eqref{identidade}, we get by Lemma~\ref{stima1}
\begin{align}
\notag
 \frac{\displaystyle\int_{C_{B}} k_s y^{1-2s}(|\nabla u_{\epsilon}|^2 + |\nabla v_{\epsilon}|^2)dx dy}{\Big(\displaystyle\int_{B}
 |u_{\epsilon}(x,0)|^{p_{\epsilon}} |v_{\epsilon}(x,0)|^q dx\Big)^{2/(p_{\epsilon}+q) }}  &\leq
\displaystyle \frac{\displaystyle\int_{C_{B}} k_s y^{1-2s}(|\nabla u_{\epsilon}|^2 + |\nabla v_{\epsilon}|^2)dx dy}{\Big(\displaystyle\int_{B}
|x|^{\alpha}|u_{\epsilon}(x,0)|^{p_{\epsilon}} |v_{\epsilon}(x,0)|^q dx\Big)^{2/(p_{\epsilon}+q)}} \\
&\leq  C_{p_{\epsilon},q}\frac{\displaystyle\int_{C_{B}}k_s
 y^{1-2s}|\nabla(\varphi  W_{\epsilon})|^2 dx dy}{\Big(\displaystyle\int_{B} |x|^{\alpha}|\varphi(x,0)W_{\epsilon}(x,0)|^{p_{\epsilon}+q} dx\Big)^{2/(p_\eps+q)}} \\
 &=  C_{p_{\epsilon},q} \S^{0}_{s,2^{*}_{s}}(C_B)+o(\epsilon),   \notag
\end{align}
as $ p_{\epsilon}+q \to 2^{*}_{s},$ for  $ \epsilon \to 0.$
On the other hand, we infer that
\begin{align*}
\frac{\displaystyle\int_{C_{B}} k_s y^{1-2s}(|\nabla u_{\epsilon}|^2 +
|\nabla v_{\epsilon}|^2)dx dy}{\Big(\displaystyle\int_{B} |u_{\epsilon}(x,0)|^{p_{\epsilon}} |v_{\epsilon}(x,0)|^q dx\Big)^{2/(p_{\epsilon}+q) }}
& \geq \S^{0}_{s,p_{\epsilon},q}(C_{B})=C_{p_{\epsilon},q}\S^{0}_{s,p_{\epsilon}+q}(C_{B}) \geq C_{p_{\epsilon},q}\S^{0}_{s,2^{*}_{s}}(C_B).
\end{align*}
The last inequality is due to H\"older inequality.
This concludes the proof.
\end{proof}

\begin{corollary}\label{noachieve}
Let $p+q=2^{*}_{s}$.\ Then the infimum $\S^{\alpha}_{s,p,q}(C_{B})$ cannot be achieved.
\end{corollary}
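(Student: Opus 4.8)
The plan is to argue by contradiction and reduce the statement to the well-known fact that the best Sobolev trace constant $\S^{0}_{s,2^{*}_{s}}(\R^{N+1}_{+})$ is not attained on a bounded domain. Throughout we keep $p+q=2^{*}_{s}$ with $2^{*}_{s}=2N/(N-2s)$.

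\emph{Step 1: the value of the infimum.} First I would note that the identity \eqref{identidade}, $\S^{\alpha}_{s,p,q}(C_{B})=C_{p,q}\,\S^{\alpha}_{s,p+q}(C_{B})$ — and its analogue for $\alpha=0$ — continues to hold at the critical exponent $p+q=2^{*}_{s}$: the bound ``$\le$'' is obtained by testing the system quotient with proportional pairs $(\sqrt{p/q}\,C\,w,\,C\,w)$, exactly as in the subcritical case, while ``$\ge$'' follows from H\"older's inequality in the $x$ variable applied to $\int_{B}|x|^{\alpha}|w_{1}(x,0)|^{p}|w_{2}(x,0)|^{q}\,dx$, the definition of $\S^{\alpha}_{s,p+q}(C_{B})$, and the elementary identity $\min_{a_{1},a_{2}>0}(a_{1}+a_{2})\,a_{1}^{-p/(p+q)}a_{2}^{-q/(p+q)}=C_{p,q}$. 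Since $|x|^{\alpha}\le 1$ on $B$ we have $\S^{\alpha}_{s,2^{*}_{s}}(C_{B})\ge\S^{0}_{s,2^{*}_{s}}(C_{B})$; conversely, evaluating the quotient of Lemma~\ref{stima1} with $p=2^{*}_{s}$ and letting $\epsilon\to0$ gives $\S^{\alpha}_{s,2^{*}_{s}}(C_{B})\le\S^{0}_{s,2^{*}_{s}}(C_{B})$. Using $\S^{0}_{s,2^{*}_{s}}(C_{B})=\S^{0}_{s,2^{*}_{s}}(\R^{N+1}_{+})$, I conclude
\[
\S^{\alpha}_{s,p,q}(C_{B})=C_{p,q}\,\S^{\alpha}_{s,2^{*}_{s}}(C_{B})=C_{p,q}\,\S^{0}_{s,2^{*}_{s}}(\R^{N+1}_{+})=\S^{0}_{s,p,q}(C_{B}).
\]

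\emph{Step 2: the contradiction.} Suppose $\S^{\alpha}_{s,p,q}(C_{B})$ is attained by some $(u,v)\in H$. Since the denominator in \eqref{minproblemorig} does not vanish, $u(\cdot,0)\not\equiv 0$ and $v(\cdot,0)\not\equiv 0$; arguing as in the Remark (nonnegativity, Lagrange multiplier, maximum principle, regularity) one may assume $u,v>0$ in $B$. Because $0<|x|^{\alpha}<1$ on the open ball $B$ and $u(x,0)^{p}v(x,0)^{q}>0$ for a.e.\ $x\in B$, the inequality $\int_{B}|x|^{\alpha}u(x,0)^{p}v(x,0)^{q}\,dx<\int_{B}u(x,0)^{p}v(x,0)^{q}\,dx$ is \emph{strict}. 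Hence
\begin{align*}
\S^{\alpha}_{s,p,q}(C_{B})
&=\frac{k_{s}\int_{C_{B}}y^{1-2s}(|\nabla u|^{2}+|\nabla v|^{2})\,dxdy}{\big(\int_{B}|x|^{\alpha}u(x,0)^{p}v(x,0)^{q}\,dx\big)^{2/(p+q)}}\\
&>\frac{k_{s}\int_{C_{B}}y^{1-2s}(|\nabla u|^{2}+|\nabla v|^{2})\,dxdy}{\big(\int_{B}u(x,0)^{p}v(x,0)^{q}\,dx\big)^{2/(p+q)}}\ \ge\ \S^{0}_{s,p,q}(C_{B}),
\end{align*}
which contradicts $\S^{\alpha}_{s,p,q}(C_{B})=\S^{0}_{s,p,q}(C_{B})$ from Step~1. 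Therefore no minimizer can exist.

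\emph{Main obstacle.} The only point that is not entirely routine is Step~1, namely that the Hardy--H\'enon weight $|x|^{\alpha}$ does not decrease the critical Sobolev constant; this rests on the boundary-concentrating family $\phi\W_{\epsilon}$ centered at $x_{0}=(1-1/|\ln\epsilon|,0,\dots,0)\to\partial B$ together with Lemma~\ref{stima1}, which are already available in the excerpt. Once Step~1 is in hand, the contradiction is produced purely by the strict inequality $\int_{B}|x|^{\alpha}u^{p}v^{q}<\int_{B}u^{p}v^{q}$, which needs only that a hypothetical minimizer have traces positive a.e.\ on $B$ — a consequence of the maximum principle already invoked in the Remark.
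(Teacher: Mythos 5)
Your proposal is correct, and Step~1 (establishing $\S^{\alpha}_{s,p,q}(C_B)=C_{p,q}\,\S^{0}_{s,2^{*}_{s}}(C_B)=\S^{0}_{s,p,q}(C_B)$ at the critical exponent) is exactly the content the paper relies on via Lemma~\ref{lemma3.2} together with identity~\eqref{identidade}. Where you diverge is in how the contradiction is extracted. The paper's proof tracks the chain
\[
C_{p,q}\S^{0}_{s,2^{*}_{s}}(C_B)=\S^{\alpha}_{s,p,q}(C_B)
=\text{(quotient with weight)}\ \ge\ \text{(quotient without weight)}\ \ge\ \S^{0}_{s,p,q}(C_B)=C_{p,q}\S^{0}_{s,2^{*}_{s}}(C_B),
\]
forces all inequalities to be equalities, and exploits the \emph{second} equality (the unweighted quotient attains $\S^{0}_{s,p,q}(C_B)$): extending $(w_1,w_2)$ by zero to $\R^{N+1}_+$ would produce a minimizer of $\S^{0}_{s,2^{*}_{s}}(\R^{N+1}_+)$ vanishing outside $C_B$, contradicting strict positivity (maximum principle). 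You instead exploit the \emph{first} inequality, showing it is necessarily strict: since $|x|^{\alpha}<1$ for a.e.\ $x\in B$ and the denominator is nonzero (so $u^{p}v^{q}>0$ on a set of positive measure), $\int_B|x|^{\alpha}u^{p}v^{q}<\int_B u^{p}v^{q}$, hence $\S^{\alpha}_{s,p,q}(C_B)>\S^{0}_{s,p,q}(C_B)$, directly contradicting Step~1. Both arguments are valid and use the same preliminary identity; yours is more elementary in that it avoids the extension-by-zero step and the positivity/classification of half-space minimizers entirely. One small remark: your invocation of the maximum principle to get $u,v>0$ pointwise is more than you need — the strict inequality already follows from the denominator being nonzero plus $|x|^{\alpha}<1$ a.e., so that reliance can be dropped.
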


\begin{proof}
Observe that, for all $\alpha\geq 0$, there holds $\S^{\alpha}_{s,p,q}(C_{B})=C_{p,q}\, \S^\alpha_{s,2^*_s}(C_{B})$.
 Suppose, by contradiction, that $\S^{\alpha}_{s,p,q}(C_{B})$ is achieved by a function $(w_1,w_2)\in H.$ Without loss of generality, we may assume that
 $w_1\geq 0$ and $w_2\geq 0$.\ By Lemma \ref{lemma3.2}, we get
\begin{align*}
C_{p,q}\S^{0}_{s,2^{*}_{s}}(C_{B})&=\S^{\alpha}_{s,p,q}(C_{B})
=  \frac{\displaystyle \int_{C_{B}}
 k_s y^{1-2s}(|\nabla w_1|^2 + |\nabla w_2|^2)dx dy}{\displaystyle  \Big(\int_{B} |x|^{\alpha} w_1(x,0)^p  w_2(x,0)^q dx\Big)^{2/(p+q)}} \\
&\geq  \frac{\displaystyle \int_{C_{B}}
 k_s y^{1-2s}(|\nabla w_1|^2 + |\nabla w_2|^2)dx dy}{\Big(\displaystyle \int_{B} w_1(x,0)^p w_2(x,0)^q dx\Big)^{2/(p+q)}}
\geq  \S^{0}_{s,p,q}(C_{B})=C_{p,q}\S^{0}_{s,2^*_s}(C_{B}),
\end{align*}
so that $\S^{0}_{s,2^{*}_{s}}(C_{B})$ is achieved at $(w_1,w_2)\in H$, being
$$
C_{p,q}\S^{0}_{s,2^{*}_{s}}(C_{B})=\frac{\displaystyle \int_{C_{B}}
 k_s y^{1-2s}(|\nabla w_1|^2 + |\nabla w_2|^2) dx dy}{\Big(\displaystyle \int_{B} w_1(x,0)^p w_2(x,0)^q dx\Big)^{2/(p+q)}}.
 $$
By setting $\tilde w_i(x,t):=w_i(x,t)$ for $(x,t)\in B\times (0,\infty)$ and
$\tilde w_i(x,t):=0$ for $(x,t)\in \R^N\setminus B\times (0,\infty)$ we get the minimizer
$(\tilde w_1,\tilde w_2)\in\S^{0}_{2^{*}_{s}}(\R^{N+1}_{+})$. A contradiction, since $w_i>0$, by the maximum principle.
\end{proof}

\begin{definition} A sequence $(w_{1,n},w_{2,n})\subset H$ is said to be tight if,
for all $\eta>0$, there is $\rho_0 >0$ with
$$
\sup_{n\in\N}\int_{\{y>\rho_0\}}\int_{B} k_s y^{1-2s}(|\nabla w_{1,n}|^{2}+ |\nabla w_{2,n}|^{2}) dxdy \leq \eta.
$$
\end{definition}

\noindent
The following concentration compactness principle \cite{lions} can be adapted from \cite[Theorem 5.1]{barrios}

\begin{proposition}\label{prop}
Let $(w_{1,n},w_{2,n})\subset H$ be tight and weakly convergent to $(w_1,w_2)$ in $H$.
Let us denote $u_{i,n}=\Tr(w_{i,n})$  and $u_{i}=\Tr(w_{i})$,  $p+q=2^{*}_{s}.$
Let $\mu,\nu$ be two nonnegative measures such that
\begin{itemize}
 \item[i)] $k_s y^{1-2s}(|\nabla w_{1,n}|^2  +  |\nabla w_{2,n}|^2 ) \rightharpoonup \mu  $\,\,\, in the sense  of measure,
\item [ii)] $ |u_{1,n}|^{p}| u_{2,n}|^q  \rightharpoonup \nu$\,\,\, in the sense of measure.
\end{itemize}
Then  there exist an at most countable set I and points $\{x_i\}_{i\in I}\subset B$ such that
\begin{equation}
\label{measureb}
\mu\geq k_s y^{1-2s}(|\nabla w_1|^2 + |\nabla w_2|^2) + \sum_{k \in I}\mu_k\delta_{(x_k,0)},\quad \,\,
\nu= |u_{1}|^{p}| u_{2}|^q+ \sum_{k \in I}\nu_k\delta_{x_k},
\end{equation}
with $\mu_k>0, \nu_k>0$ and $\mu_k \geq  C_{p,q}\S^{0}_{s,2^{*}_{s}}\nu^{2/2^{*}_{s}}_{k}.$
\end{proposition}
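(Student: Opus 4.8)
The plan is to adapt the classical concentration-compactness principle of Lions to the weighted fractional setting, following closely the single-equation argument of \cite[Theorem 5.1]{barrios} and handling the product nonlinearity $|u_{1}|^{p}|u_{2}|^{q}$ by Young's inequality. First I would set $w_{i,n}=w_{i}+\tilde w_{i,n}$, so that $\tilde w_{i,n}\rightharpoonup 0$ in $H^{1}_{0,L}(C_{B})$ and, by the compactness of the trace operator $H^{1}_{0,L}(C_{B})\to L^{r}(B)$ for $r<2^{*}_{s}$ (Theorem \ref{existence}), $\tilde u_{i,n}=\Tr(\tilde w_{i,n})\to 0$ strongly in every subcritical $L^{r}(B)$ and a.e. on $B$. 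The two defining weak-$\ast$ convergences then give nonnegative bounded measures $\mu,\nu$ on $\overline{C_B}$ and $\overline B$ respectively; tightness is exactly what guarantees that no mass of $\mu$ escapes to $y=+\infty$, so that $\mu$ is supported on $\overline B\times[0,\infty)$ with finite total mass.

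The core step is the reverse H\"older/Sobolev inequality at concentration points. Fix $\phi\in C_{0}^{\infty}(\R^{N+1})$ and apply the trace Sobolev inequality $\S^{0}_{s,2^{*}_{s}}\big(\int_{B}|\Tr(\phi\, w)|^{2^{*}_{s}}\big)^{2/2^{*}_{s}}\le k_{s}\int_{C_{B}}y^{1-2s}|\nabla(\phi w)|^{2}$ to $w=w_{1,n}$ and $w=w_{2,n}$, add the two, and pass to the limit. The cross terms $\int y^{1-2s}\phi\,\nabla\phi\cdot(\tilde w_{i,n}\nabla\tilde w_{i,n}+\cdots)$ vanish because $\tilde w_{i,n}\to 0$ in $L^{2}(y^{1-2s},\,\mathrm{supp}\,\nabla\phi)$ by Rellich on the cylinder, while on the nonlinear side one uses Young's inequality $|u_{1}|^{p}|u_{2}|^{q}\le \tfrac{p}{p+q}|u_{1}|^{2^{*}_{s}}+\tfrac{q}{p+q}|u_{2}|^{2^{*}_{s}}$ together with the identity \eqref{identidade} (the constant $C_{p,q}$) to convert the sum of the two scalar Sobolev inequalities into one for the product quantity, obtaining
\begin{equation*}
C_{p,q}\,\S^{0}_{s,2^{*}_{s}}\Big(\int_{\overline B}|\phi|^{2^{*}_{s}}\,d\nu\Big)^{2/2^{*}_{s}}\le \int_{\overline{C_B}}|\phi|^{2}\,d\mu .
\end{equation*}
From here the standard atom-extraction argument applies verbatim: $\nu$ and the singular part of $\mu$ are concentrated on an at most countable set $\{x_{k}\}\subset\overline B$; using $\phi$ concentrating at a fixed point yields $\mu_{k}\ge C_{p,q}\S^{0}_{s,2^{*}_{s}}\nu_{k}^{2/2^{*}_{s}}$, and the Brezis--Lieb lemma for the product $u_{1,n}^{p}u_{2,n}^{q}$ (together with a.e. convergence and boundedness in $L^{2^{*}_{s}/p}\times L^{2^{*}_{s}/q}$) gives the exact decomposition $\nu=|u_{1}|^{p}|u_{2}|^{q}+\sum\nu_{k}\delta_{x_{k}}$; weak lower semicontinuity of the Dirichlet energy on the complement of $\{x_{k}\}$ gives the inequality for $\mu$ in \eqref{measureb}.

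The main obstacle, as usual in concentration-compactness on the extension cylinder, is to control the boundary/weight interaction: one must make sure that the measures $\mu$ and $\nu$ do not have mass on $\partial B\times\{0\}$ or, more precisely, that a concentration point $x_{k}$ lying on $\partial B$ is still admissible and that the trace inequality survives the localization by $\phi$ near such a point. This is handled by the zero boundary condition ($w_{i,n}=0$ on $\partial_{L}C_{B}$), which lets one extend by zero to all of $\R^{N+1}_{+}$ and invoke the sharp inequality $\S^{0}_{s,2^{*}_{s}}(\R^{N+1}_{+})=\S^{0}_{s,2^{*}_{s}}(C_{B})$ as already recorded before Lemma \ref{lemma3.2}; the weight $y^{1-2s}$ being an $A_{2}$ Muckenhoupt weight guarantees the Rellich-type compactness needed for the vanishing of the cross terms. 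A secondary technical point is the uniform integrability that makes the Brezis--Lieb splitting legitimate for a product of powers rather than a single power; this follows from $p,q>1$, H\"older, and the boundedness of $(u_{i,n})$ in $L^{2^{*}_{s}}(B)$.
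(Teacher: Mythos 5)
The paper does not actually prove Proposition~\ref{prop}; it simply cites \cite[Theorem 5.1]{barrios} and says the result ``can be adapted.'' Your outline — shift to $\tilde w_{i,n}=w_{i,n}-w_i$, use trace compactness for a.e.\ convergence, prove a localized reverse H\"older inequality, extract atoms, use a Brezis--Lieb decomposition to recover $\nu$, and weak lower semicontinuity for the regular part of $\mu$ — is exactly the right skeleton for such an adaptation, and the remarks about the $A_2$ weight, tightness, and boundary concentration are all pertinent.

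However, the mechanism you propose for the crucial step does not work. You claim to get the reverse H\"older inequality with the \emph{sharp} constant $C_{p,q}\S^{0}_{s,2^{*}_{s}}$ by summing the two scalar trace Sobolev inequalities for $\phi w_{1,n}$ and $\phi w_{2,n}$ and then applying Young's inequality to convert the sum into a bound on the product. That route only yields the worse constant $\S^{0}_{s,2^{*}_{s}}$. Indeed, setting $a=\|\phi u_{1,n}\|_{2^{*}_{s}}$, $b=\|\phi u_{2,n}\|_{2^{*}_{s}}$, the sum of scalar Sobolev inequalities controls $\S^{0}_{s,2^{*}_{s}}(a^2+b^2)$, while H\"older/Young give
$\big(\int|\phi|^{2^{*}_{s}}|u_{1,n}|^{p}|u_{2,n}|^{q}\big)^{2/2^{*}_{s}}\le a^{2p/(p+q)}b^{2q/(p+q)}\le a^2+b^2$,
so you obtain only $\S^{0}_{s,2^{*}_{s}}\big(\int|\phi|^{2^{*}_{s}}\,d\nu\big)^{2/2^{*}_{s}}\le\int|\phi|^2\,d\mu$. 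Since $C_{p,q}\in(1,2]$, this is strictly weaker, and the inequality your argument would need, namely
$C_{p,q}\big(\tfrac{p}{2^{*}_{s}}a^{2^{*}_{s}}+\tfrac{q}{2^{*}_{s}}b^{2^{*}_{s}}\big)^{2/2^{*}_{s}}\le a^2+b^2$,
is false in general; e.g.\ for $p=q$ it reads $2\big(\tfrac{a^{2^{*}_{s}}+b^{2^{*}_{s}}}{2}\big)^{2/2^{*}_{s}}\le a^2+b^2$, which contradicts the power-mean inequality ($M_2\le M_{2^{*}_{s}}$) unless $a=b$. Young's inequality simply cannot see the correlation between the two components that produces the constant $C_{p,q}$.

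The fix is to not decouple the pair at all: apply the \emph{vector} trace Sobolev inequality
\begin{equation*}
C_{p,q}\,\S^{0}_{s,2^{*}_{s}}\Big(\int_{B}|\phi u_{1,n}|^{p}|\phi u_{2,n}|^{q}\,dx\Big)^{2/2^{*}_{s}}
\le k_s\int_{C_B}y^{1-2s}\big(|\nabla(\phi w_{1,n})|^2+|\nabla(\phi w_{2,n})|^2\big)\,dx\,dy,
\end{equation*}
which is nothing but identity~\eqref{identidade} with $\alpha=0$ read as a lower bound for the Rayleigh quotient of the pair $(\phi w_{1,n},\phi w_{2,n})$. With this single replacement, the rest of your argument (vanishing of the cross terms by weighted Rellich, atom extraction, Brezis--Lieb for the product, lower semicontinuity for $\mu$) goes through as written. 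A minor additional remark: the atoms should be allowed on $\overline{B}$, as you write, even though the statement in the paper says $B$; this is relevant later since the concentration point lands on $\partial B$.
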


\noindent
Finally, we give an explicit form to the solutions of the problem
\begin{equation}
\label{eq:1.1-p}
\left\{
\begin{aligned}
(-\Delta)^s u &=\frac{2p}{p+q} u^{p-1}v^q && \text{in $\mathbb{R}^N$},\\
(-\Delta)^s v &=\frac{2q}{p+q} u^pv^{q-1} && \text{in $\mathbb{R}^N$},\\
 u>0&,\,\, v>0   && \text{in $\mathbb{R}^N$}, \\
\end{aligned}
\right.
\end{equation}
where $p+q = 2^*_s$.
Let $u,v\in L^{2^*_s}(\mathbb{R}^N)$ be solutions of the following problem
\begin{equation}\label{eq:1.2-p}
\left\{
\begin{aligned}
u &=\frac{2p}{p+q}\int_{\mathbb{R}^N} \frac{u^{p-1}(y)v^q(y)}{|x-y|^{N-2s}}\,dy,\\
v &=\frac{2q}{p+q}\int_{\mathbb{R}^N}\frac{u^p(y)v^{q-1}(y)}{|x-y|^{N-2s}}\,dy.\\
 &u>0,\,\, v>0   \quad \text{in $\mathbb{R}^N$}. \\
\end{aligned}
\right.
\end{equation}
Denote by
$$
\tilde u(x):= \frac 1{|x|^{N-2s}}u\big(\frac x{|x|^2}\big),\qquad
\tilde v(x):= \frac 1{|x|^{N-2s}}v\big(\frac x{|x|^2}\big),
$$
the Kelvin transform of $u$ and $v$, respectively. Hence, $(\tilde u, \tilde v)$ is also a solution of \eqref{eq:1.2-p}.
We may prove as in \cite[Theorem 4.5]{CLO} that problems \eqref{eq:1.1-p} and \eqref{eq:1.2-p} are equivalent, 
that is if $(u,v)$ with $u,v \in H^s(\mathbb{R}^N)$ is a weak solution of \eqref{eq:1.1-p}, then $(u,v)$ is a  solution of \eqref{eq:1.2-p}, 
while if $(u,v)$ with $u,v \in L^{\frac {2N}{N-2s}}(\mathbb{R}^N)$ solves \eqref{eq:1.2-p}, 
then $(u,v)$ is a solution of \eqref{eq:1.1-p}.
\noindent
Now we show that $L^{\frac {2N}{N-2s}}(\mathbb{R}^N)$ solution $(u,v)$ of the following problem is radially symmetric.
\begin{equation}\label{eq:1.2-p1}
\left\{
\begin{aligned}
u(x) &=\int_{\mathbb{R}^N} \frac{u^{p-1}(y)v^q(y)}{|x-y|^{N-2s}}\,dy,\\
v(x) &=\int_{\mathbb{R}^N}\frac{u^p(y)v^{q-1}(y)}{|x-y|^{N-2s}}\,dy.\\
 &u>0,\,\, v>0   \quad \text{in $\mathbb{R}^N$}. \\
\end{aligned}
\right.
\end{equation}

\noindent
Let $\Sigma_\lambda =\{x = (x_1,\cdots,x_N): x_1>\lambda\}$,\, $x^\lambda = (2\lambda-x_1,x_2,\cdots, x_N)$ and $u_\lambda(x) = u(x^\lambda)$.
\begin{lemma}\label{lem:0.2a}
Let $(u,v)$ be a solution of \eqref{eq:1.2-p1}. Then $(u,v)$ is radially symmetric with respect to some point.
\end{lemma}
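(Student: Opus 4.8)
The plan is to run the method of moving planes for integral systems, in the spirit of \cite[Theorem 4.5]{CLO}, adapted to the coupled structure of \eqref{eq:1.2-p1}. Fix the direction $e_1$. For $\lambda\in\R$ and $x\in\Sigma_\lambda$, I would split each integral in \eqref{eq:1.2-p1} over $\Sigma_\lambda$ and its complement, change variables $y\mapsto y^\lambda$ in the second piece, and use $|x-y^\lambda|=|x^\lambda-y|$ to obtain, for $x\in\Sigma_\lambda$,
$$
u(x)-u_\lambda(x)=\int_{\Sigma_\lambda}\Big(\frac1{|x-y|^{N-2s}}-\frac1{|x^\lambda-y|^{N-2s}}\Big)\big(u^{p-1}v^q(y)-u_\lambda^{p-1}v_\lambda^q(y)\big)\,dy ,
$$
together with the analogous identity for $v-v_\lambda$, with $u^pv^{q-1}$ replacing $u^{p-1}v^q$. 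For $x,y\in\Sigma_\lambda$ the kernel in brackets is strictly positive and bounded above by $|x-y|^{2s-N}$. Setting $\Sigma_\lambda^u:=\{x\in\Sigma_\lambda:u(x)>u_\lambda(x)\}$ and $\Sigma_\lambda^v$ analogously, off $\Sigma_\lambda^u\cup\Sigma_\lambda^v$ one has $u\le u_\lambda$, $v\le v_\lambda$, hence $u^{p-1}v^q\le u_\lambda^{p-1}v_\lambda^q$ there, so the positive part of the integrand is supported in $\Sigma_\lambda^u\cup\Sigma_\lambda^v$.

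To get started, slide the plane in from $+\infty$. Using the elementary inequality $a^r-b^r\le C_r\,a^{r-1}(a-b)$, valid for $a>b>0$ and any $r>0$ (splitting the cases $a\le 2b$ and $a>2b$), one bounds on $\Sigma_\lambda^u\cup\Sigma_\lambda^v$
$$
\big(u^{p-1}v^q-u_\lambda^{p-1}v_\lambda^q\big)^+\le C\big(u^{p-2}v^q\,(u-u_\lambda)^+ + u^{p-1}v^{q-1}\,(v-v_\lambda)^+\big) ,
$$
and similarly for the second component. Inserting this into the identities, applying the Hardy--Littlewood--Sobolev inequality (convolution with $|\cdot|^{2s-N}$ maps $L^{2N/(N+2s)}$ into $L^{2^{*}_{s}}$) and then H\"older---the exponents fitting because $p+q=2^{*}_{s}$, so that $\tfrac{p-2}{2^{*}_{s}}+\tfrac{q}{2^{*}_{s}}+\tfrac1{2^{*}_{s}}=\tfrac{N+2s}{2N}$---yields
$$
\|(u-u_\lambda)^+\|_{L^{2^{*}_{s}}(\Sigma_\lambda^u)}+\|(v-v_\lambda)^+\|_{L^{2^{*}_{s}}(\Sigma_\lambda^v)}\le C\,\Theta_\lambda\big(\|(u-u_\lambda)^+\|_{L^{2^{*}_{s}}(\Sigma_\lambda^u)}+\|(v-v_\lambda)^+\|_{L^{2^{*}_{s}}(\Sigma_\lambda^v)}\big),
$$
where $\Theta_\lambda$ depends only on suitable norms of $u,v$ over the defect sets; when $p$ or $q$ is below $2$ this requires the decay $u,v=O(|x|^{2s-N})$ (which follows from the integral representation as in \cite{CLO}), or, equivalently, one works with the Kelvin transforms $(\tilde u,\tilde v)$, which also solve \eqref{eq:1.2-p1} and decay. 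Since $u,v\in L^{2N/(N-2s)}(\R^N)$, $\Theta_\lambda\to0$ as $\lambda\to+\infty$, so $C\Theta_\lambda<\tfrac12$ for $\lambda$ large, forcing $(u-u_\lambda)^+=(v-v_\lambda)^+=0$, i.e.\ $u\le u_\lambda$ and $v\le v_\lambda$ on $\Sigma_\lambda$.

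Next I would slide the plane down to $\lambda_0:=\inf\{\mu:\ u\le u_\lambda,\ v\le v_\lambda\text{ on }\Sigma_\lambda\text{ for all }\lambda\ge\mu\}$. By continuity the inequalities persist at $\lambda_0$, and $\lambda_0>-\infty$, since $u\le u_\lambda$ on $\Sigma_\lambda$ for every $\lambda$ would force, on letting $\lambda\to-\infty$ along the $e_1$-direction and using Fubini with $u\in L^{2N/(N-2s)}$, that $u\equiv0$. If $(u,v)$ is not symmetric about $\{x_1=\lambda_0\}$, the identities with the strictly positive kernels upgrade both inequalities to \emph{strict} ones throughout $\Int\Sigma_{\lambda_0}$; combined with continuity in $\lambda$ on large balls and the absolute continuity of $x\mapsto\int|u|^{2^{*}_{s}}$, this keeps the $L^{2^{*}_{s}}$-mass of $u,v$ on $\Sigma_\lambda^u,\Sigma_\lambda^v$ small for $\lambda$ slightly below $\lambda_0$, so the estimate of the previous step again gives $u\le u_\lambda$, $v\le v_\lambda$ on $\Sigma_\lambda$---contradicting the definition of $\lambda_0$. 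Hence $u,v$ are symmetric about $\{x_1=\lambda_0\}$.

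Finally, the same argument applies with $e_1$ replaced by any direction, giving a hyperplane of symmetry orthogonal to each direction; since invariance under a nontrivial translation is incompatible with $u,v\in L^{2N/(N-2s)}(\R^N)\setminus\{0\}$, this hyperplane is unique in each direction, and a standard argument then shows the family shares a common point $x_0$, whence $(u,v)$ is invariant under all reflections through $x_0$ and therefore radially symmetric about $x_0$. I expect the main difficulty to be the quantitative part of the middle steps: producing the ``self-improving'' estimate whose coefficient $\Theta_\lambda$ genuinely tends to $0$---delicate precisely when $p$ or $q<2$, where it leans on the decay rate and the use of the Kelvin transform---and then controlling the mass carried by the defect sets $\Sigma_\lambda^u,\Sigma_\lambda^v$ as $\lambda\downarrow\lambda_0$ so as to re-run that estimate.
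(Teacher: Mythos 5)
Your proposal follows essentially the same route as the paper: the method of moving planes for integral systems \`a la Chen--Li--Ou, starting from the split integral identity, using the Hardy--Littlewood--Sobolev inequality followed by H\"older's inequality to obtain the self-improving contraction estimate at the starting position, and then sliding the plane to a critical position. The paper sketches this and defers the continuation step to \cite{ZL}, whereas you spell out the continuation and the multi-direction argument and explicitly flag the subtlety when $p$ or $q<2$ (where the Kelvin transform and decay are needed), which the paper's sketch leaves implicit.
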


\begin{proof} 
The result is proved by the moving plane methods developed for integral equations, see \cite{CL}. The argument is now standard, we sketch the proof. For details, we refer to similar arguments in \cite{ZL}. We have
\[
u_\lambda(x) - u(x) = \int_{\Sigma_\lambda}\Big(\frac 1{|x-y|^{N-2s}} - \frac 1{|x^\lambda-y|^{N-2s}}\Big)\Big(u_\lambda^{p-1}(y)v_\lambda^q(y) - u^{p-1}(y)v^q(y)\Big)\,dy
\]
and
\[
v_\lambda(x) - v(x) = \int_{\Sigma_\lambda}\Big(\frac 1{|x-y|^{N-2s}} - \frac 1{|x^\lambda-y|^{N-2s}}\Big)\Big(u_\lambda^{p}(y)v_\lambda^{q-1}(y) - u^{p}(y)v^{q-1}(y)\Big)\,dy.
\]
Next, we claim that there exist $K\geq 0$, such that if $\lambda< - K$, there holds
\[
u(x)\geq u_\lambda(x)\quad{\rm and }\quad v(x)\geq v_\lambda(x).
\]
Indeed, define
\[
\Sigma_\lambda^u =\{x\in \Sigma_\lambda: u(x)\leq u_\lambda(x)\},\quad \Sigma_\lambda^v =\{x\in \Sigma_\lambda: v(x)\leq v_\lambda(x)\}
\]
and $\Sigma_\lambda^- = \Sigma_\lambda\setminus \big(\Sigma_\lambda^u\cup \Sigma_\lambda^v\big)$, we can deduce as \cite{ZL} that
\[
u_\lambda(x) - u(x) \leq \int_{\Sigma_\lambda^v}\frac 1{|x-y|^{N-2s}} u_\lambda^{p-1}(y)\Big(v_\lambda^q(y)- v^q(y)\Big)\,dy + \int_{\Sigma_\lambda^u}\frac 1{|x-y|^{N-2s}} v_\lambda^q(y)\Big(u_\lambda^{p-1}(y) - u^{p-1}(y)\Big)\,dy.
\]
By the Hardy-Littlewood-Sobolev inequality,
\[
\|u_\lambda(x) - u(x)\|_{L^{2^*_s}(\Sigma_\lambda^u)}\leq C\|u_\lambda^{p-1}v_\lambda^{q-1}(v_\lambda-v)\|_{L^{\frac{2^*_sN}{N+2s2^*_s}}(\Sigma_\lambda^v)}
+ C\|u_\lambda^{p-2}v_\lambda^{q}(u_\lambda-u)\|_{L^{\frac{2^*_sN}{N+2s2^*_s}}(\Sigma_\lambda^u)}.
\]
By H\"{o}lder's inequality,
\[
\begin{split}
&\|u_\lambda(x) - u(x)\|_{L^{2^*_s}(\Sigma_\lambda^u)}\\
&\leq C\|u_\lambda\|^{p-1}_{L^{2^*_s}(\Sigma_\lambda^v)}\|v_\lambda\|^{q-1}_{L^{2^*_s}(\Sigma_\lambda^v)}
\|(v_\lambda-v)\|_{L^{2^*_s}(\Sigma_\lambda^v)}
+ C\|u_\lambda\|^{p-2}_{L^{2^*_s}(\Sigma_\lambda^u)}\|v_\lambda\|^{q}_{L^{2^*_s}(\Sigma_\lambda^u)}
\|(u_\lambda-u)\|_{L^{2^*_s}(\Sigma_\lambda^u)}.\\
\end{split}
\]
Choose $K>0$ large and for $\lambda<-K$, we have
\[
\|u_\lambda(x) - u(x)\|_{L^{2^*_s}(\Sigma_\lambda^u)}\leq \frac 14\|u_\lambda(x) - u(x)\|_{L^{2^*_s}(\Sigma_\lambda^u)} +\frac 14\|v_\lambda(x) - v(x)\|_{L^{2^*_s}(\Sigma_\lambda^v)}.
\]
Similarly,
\[
\|v_\lambda(x) - v(x)\|_{L^{2^*_s}(\Sigma_\lambda^u)}\leq \frac 14\|u_\lambda(x) - u(x)\|_{L^{2^*_s}(\Sigma_\lambda^u)} +\frac 14\|v_\lambda(x) - v(x)\|_{L^{2^*_s}(\Sigma_\lambda^v)}.
\]
The claim follows easily. Now, we may proceed as the proof of  \cite[Theorem 1.1]{ZL}.
\end{proof}

\noindent
It is known \cite{CLO} that a positive solution $U\in L^{2^*_s}(\mathbb{R}^N)$ of the problem
\begin{equation}\label{eq:1.3}
(-\Delta)^s u = u^{\frac{N+2s}{N-2s}} \quad\text{in $\mathbb{R}^N$,}
\end{equation}
is given by
\[
U(x) = C\Big(\frac t{t^2 + |x-x_0|^2}\Big)^{\frac{N-2s}2},
\]
for some constant $C = C(N,s)>0$, some $t>0$ and $x_0\in \mathbb{R}^N$.

\begin{lemma}\label{lem:0.2}
Let $(u,v)$ be a nontrivial weak solution of problem \eqref{eq:1.1-p}.
There exist $A, B>0$ such that $u = AU$ and $v= BU$.
\end{lemma}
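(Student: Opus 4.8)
The plan is to pass to the integral formulation, rescale so that the coupling constants disappear, prove that the two components of the rescaled solution coincide, and then invoke the scalar classification \eqref{eq:1.3}. First, since a nontrivial weak solution satisfies $u,v\in H^s(\mathbb{R}^N)\hookrightarrow L^{2^{*}_{s}}(\mathbb{R}^N)$, the equivalence recalled above (in the spirit of \cite[Theorem 4.5]{CLO}) gives that $(u,v)$ solves the integral system \eqref{eq:1.2-p}. I would then set $\bar u:=A^{-1}u$ and $\bar v:=B^{-1}v$ and choose $A,B>0$ so that $(\bar u,\bar v)$ solves the constant-free system \eqref{eq:1.2-p1}; substituting into \eqref{eq:1.2-p}, this requires precisely $\tfrac{2p}{p+q}A^{p-2}B^q=1$ and $\tfrac{2q}{p+q}A^pB^{q-2}=1$. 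Dividing these forces $B/A=\sqrt{q/p}$, and then multiplying them pins down $A^{\,p+q-2}$, hence $A$ and $B$, uniquely (recall $p+q-2=2^{*}_{s}-2>0$). Thus $(\bar u,\bar v)$ is a positive $L^{2^{*}_{s}}$ solution of \eqref{eq:1.2-p1}, and by Lemma~\ref{lem:0.2a} it is radially symmetric about one common point.

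The core step is to show $\bar u\equiv\bar v$. Put $\phi:=\bar u-\bar v$ and $K:=\bar u^{p-1}\bar v^{q-1}>0$. Subtracting the two equations in \eqref{eq:1.2-p1} and using $\bar u^{p-1}\bar v^q-\bar u^p\bar v^{q-1}=-K\phi$, one obtains the linear identity
\[
\phi(x)=-\int_{\mathbb{R}^N}\frac{K(y)\,\phi(y)}{|x-y|^{N-2s}}\,dy,\qquad x\in\mathbb{R}^N .
\]
Now multiply by $K(x)\phi(x)$ and integrate over $\mathbb{R}^N$. The left-hand side becomes $\int_{\mathbb{R}^N}K\phi^2\,dx=\int_{\mathbb{R}^N}(K\phi)\phi\,dx\ge 0$, and the right-hand side becomes $-\int_{\mathbb{R}^N}\int_{\mathbb{R}^N}\frac{(K\phi)(x)(K\phi)(y)}{|x-y|^{N-2s}}\,dx\,dy$. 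All the integrals converge: $\phi\in L^{2N/(N-2s)}$, while $K\phi=\bar u^p\bar v^{q-1}-\bar u^{p-1}\bar v^q\in L^{2N/(N+2s)}$ — the latter being exactly the integrability that makes the right-hand sides of \eqref{eq:1.2-p1} meaningful, and these two being conjugate Lebesgue exponents — so the pairing is finite and, by the Hardy--Littlewood--Sobolev inequality, so is the double integral. Moreover the double integral is $\ge 0$, because the Riesz kernel $|x-y|^{2s-N}$ is positive definite (it is a positive multiple of the convolution square of the order-$s$ Riesz potential, so the double integral equals a positive multiple of an $L^2$-norm squared). Hence $\int_{\mathbb{R}^N}K\phi^2\,dx\le 0$, which forces $K\phi^2\equiv 0$ and therefore, since $K>0$, $\phi\equiv 0$, i.e.\ $\bar u\equiv\bar v$.

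Finally, with $\bar u=\bar v$ the first equation of \eqref{eq:1.2-p1} becomes the scalar integral equation $\bar u(x)=\int_{\mathbb{R}^N}\bar u(y)^{2^{*}_{s}-1}|x-y|^{2s-N}\,dy$; after an innocuous scalar rescaling and the equivalence of \cite{CLO}, this is the integral form of $(-\Delta)^s\bar u=c\,\bar u^{(N+2s)/(N-2s)}$, and the classification recalled in \eqref{eq:1.3} gives $\bar u=C\,U$ with $U$ of the Aubin--Talenti form. Consequently $u=A\bar u=(AC)U$ and $v=B\bar v=(BC)U$, which after relabelling the constants is exactly the assertion (with, in addition, the proportionality $u/v\equiv A/B=\sqrt{p/q}$, consistent with the rescaling $B=\sqrt{p/q}\,C$ recorded in Section~\ref{prelim}). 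I expect the only genuinely delicate points to be the convergence bookkeeping in the displayed identity and the positive definiteness of the Riesz kernel; the remaining steps are elementary algebra together with the cited equivalence and classification.
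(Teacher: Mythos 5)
Your proof is correct, and it takes a genuinely different route from the paper's. After passing to the normalized integral system \eqref{eq:1.2-p1}, the paper proves radial symmetry via moving planes (Lemma~\ref{lem:0.2a}), applies a Kelvin transform with a nonzero pole, argues following \cite{GL} that the pole and the new center of symmetry must differ, and then invokes the method of \cite[Lemma 7]{B} to identify the profile. You instead prove directly that $\bar u\equiv\bar v$: the algebraic identity $\bar u^{p-1}\bar v^q-\bar u^p\bar v^{q-1}=-\bar u^{p-1}\bar v^{q-1}(\bar u-\bar v)$ turns the difference of the two integral equations into a linear relation for $\phi=\bar u-\bar v$, and pairing against $K\phi$ gives $\int K\phi^2=-\iint\frac{(K\phi)(x)(K\phi)(y)}{|x-y|^{N-2s}}\,dx\,dy$, whose right side is nonpositive by positive definiteness of the Riesz kernel (the semigroup property $I_{2s}=I_s\circ I_s$, with $I_s:L^{2N/(N+2s)}\to L^2$ making this rigorous since $K\phi\in L^{2N/(N+2s)}$). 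This forces $\phi\equiv 0$, and the scalar classification of \cite{CLO} finishes the job. Your route is more elementary and self-contained---Lemma~\ref{lem:0.2a}, which you invoke for radial symmetry, is in fact never used---at the cost of being tailored to the ``potential form'' coupling $u^pv^q$ that makes the subtraction identity so clean; the paper's moving-plane argument would survive more general couplings. The integrability bookkeeping is right (HLS for the double integral, $L^{2N/(N+2s)}$--$L^{2N/(N-2s)}$ duality for the left side). One cosmetic slip: multiplying the two normalization conditions determines $A^{2(p+q-2)}$, not $A^{p+q-2}$; uniqueness of $A$ is of course unaffected.
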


\begin{proof}
We known that the solutions $(u,v)$ of \eqref{eq:1.1-p} are solutions of \eqref{eq:1.2-p}. 
By Lemma \ref{lem:0.2a}, any solution $(u,v)$ of \eqref{eq:1.2-p} is radially symmetric and monotone 
 decreasing about some point. Let $(\tilde u, \tilde v)$ be the Kelvin transform of $(u,v)$ with the pole $p\not=0$
 \[
 \tilde u(x) = \frac 1{|x-p|^{N-2s}} u\Big(\frac{x-p}{|x-p|^2}+p\Big),
 \,\,\quad 
 \tilde v(x) = \frac 1{|x-p|^{N-2s}} v\Big(\frac{x-p}{|x-p|^2}+p\Big).
 \]
We remark that $(\tilde u,\tilde v)$ is a solution of \eqref{eq:1.2-p} too, and then $(\tilde u,\tilde v)$ is radially symmetric with respect to some point $q$. Following the argument on page 280 in \cite{GL}, we can see that if $p=q$, then $(u,v)$ is constant, which is not true in our case. Hence, $p\not = q$. Now, using the Kelvin transform 
\[
K(f)(x) = \frac 1{|x|^{N-2s}} f\Big(\frac{x}{|x|^2}\Big),
\]
we deduce as in \cite[proof of  Lemma 7]{B} that $u = AU$ and $v= BU$. 
\end{proof}

\noindent
\section{Proof of  Theorem \ref{main}.}

\noindent
Choose $p_k$ such that $p_k +q \to 2^{*}_{s},$ as $k \to \infty$.
Let $(w_{1,k},w_{2,k})\in H$ be a nonnegative solution to
\begin{equation}
\label{min-probbloc}
\S^{\alpha}_{s,p_k,q}(C_{B})=\frac{\displaystyle k_s\int_{C_{B}}
 y^{1-2s}(|\nabla w_{1,k}|^2 +|\nabla w_{2,k}|^2)dx dy}{\Big(\displaystyle\int_{B} |x|^{\alpha}|w_{1,k}(x,0)|^{p_k} |w_{2,k}(x,0)|^q dx\Big)^{\frac{2}{p_k+q}}}.
\end{equation}
Up to the factor $((p_k+q)\lambda_k/2)^{1/(p_k+q-2)}$ depending upon the Lagrange multiplier $\lambda_k$,
$(w_{1,k},w_{2,k})$ solves
\begin{equation}
\label{system-ww}
\left\{\begin{array}{l@{\quad }c}
 -\div(y^{1-2s}\nabla  w_{1,k}) = 0,\quad  -\div(y^{1-2s}\nabla  w_{2,k}) = 0, & \text{in $C_{B}$}, \\
k_s y ^{1-2s}\frac{\partial w_{1,k}}{\partial \nu}= \frac
{2p_k}{p_k+q} |x|^\alpha   w_{1,k}(x, 0) ^
{p_k-1}  w_{2,k}(x, 0)^q,  &\text{on $B\times \{0\}$}, \\
k_s y ^{1-2s}\frac{\partial  w_{2,k}}{\partial \nu}= \frac
{2q}{p_\epsilon+q}|x|^\alpha w_{1,k}(x, 0)^{p_k}  w_{2,k}(x, 0)^{q-1},& \text{on $B\times \{0\}$},\\
 w_{1,k} =  w_{2,k} = 0,& \text{on $\partial_L C_{B}$}.
\end{array}\right.
\end{equation}
In particular, we get
\begin{equation}
\label{ident}
\int_{C_{B}} k_s y^{1-2s}(|\nabla w_{1,k}|^2 + |\nabla w_{2,k}|^2)dx dy =2\int_B |x|^{\alpha} w_{1,k}(x,0)^{p_k} w_{2,k}(x,0)^q dx.
\end{equation}
One may now set, for every $x\in B$ and $y>0$,
\begin{equation}
\label{connection}
\tilde w_{i,k}(x,y):=C_{k}w_{i,k}(x,y),  \,\,\quad
C_{k}=\Big(\int_{B}  w_{1,k}(x,0)^{p_k}w_{2,k}(x,0)^{q} dx\Big)^{-\frac{1}{p_k+q}},\quad i=1,2.
\end{equation}
We have
$$
\int_{B}  \tilde w_{1,k}(x,0)^{p_k}\tilde w_{2,k}(x,0)^{q} dx=1,
\,\, \quad\text{for all $k\in\N$},
$$
and by \eqref{min-probbloc} and  Lemma \ref{lemma3.2}, we have
$$
\int_{C_{B}} k_s y^{1-2s}(|\nabla \tilde w_{1,k}|^2 + |\nabla \tilde w_{2,k}|^2) dx dy=  C_{p,q}\S^{0}_{s,2^{*}_{s}}+o_k(1),\quad \text{as $k\to\infty$.}
$$
The sequence
$C_{k}$ converges to some $C>0$, whenever $k\to\infty$.
This can be proved by comparison with the term $\int_B |x|^{\alpha} w_{1,k}(x,0)^{p_k} w_{2,k}(x,0)^q dx,$
which converges to a constant in view of formulas \eqref{min-probbloc}, \eqref{ident} and Lemma~\ref{lemma3.2}. In fact, taking into
account the Sobolev trace inequality, we have
$$
0<\sigma\leq \int_B |x|^{\alpha} w_{1,k}(x,0)^{p_k} w_{2,k}(x,0)^q dx\leq C_{k}^{-p_k-q}
\leq C\|w_{1,k}\|_{L^{2^*_s}(B)}^{p_k}\|w_{2,k}\|_{L^{2^*_s}(B)}^{q}\leq C.
$$
The sequence $(\tilde w_{1,k},\tilde w_{2,k})$ is bounded in  $H$. Furthermore, it is {\em tight}.
This fact can be proved by arguing as in \cite[Lemma 3.6]{barrios}.
By Proposition \ref{prop}, there exist nonnegative measures
 $\mu,\nu,$ a pair of  functions $ (w_1,w_2 ) \in H,$ an at most countable set $J$ and points with  $\{x_i\}_{i\in J}\subset B$ such that
\begin{itemize}
\item [i)] $\tilde w_{i,k} \rightharpoonup w_{i}$,\,\,\, $i=1,2,$
 \item[ii)] $k_s y^{1-2s}(|\nabla \tilde w_{1,k}|^2  + |\nabla \tilde w_{2,k}|^2 ) \rightharpoonup \mu $ \,\, in the sense  of measure,
\item [iii)] $ |\tilde w_{1,k}(x,0)|^{p}| \tilde w_{2,k}(x,0)|^q  \rightharpoonup \nu$\,\, in the sense of measure,
\end{itemize}
and \eqref{measureb} holds with $\nu_k>0$ and $\mu_k \geq C_{p,q} \S^{0}_{s,2^{*}_{s}}\nu^{2/2^{*}_{s}}_{k}.$ It follows that
\begin{align*}
\lim_k \int_{\R^{N+1}_{+}}k_s y^{1-2s}(|\nabla \tilde w_{1,k}|^2  +  |\nabla \tilde w_{2,k}|^2 )\phi dxdy  & = \int_{\R^{N+1}_{+}}\phi \, d\mu,
\quad\forall \phi \in L^{\infty}\cap C(\R^{N+1}_{+}),\\
\lim_k \int_{\R^{N}} |\tilde w_{1,k}(x,0)|^{p}| \tilde w_{2,k}(x,0)|^q \phi dx & = \int_{\R^{N}}\phi \, d\nu,
\quad\forall \phi \in L^{\infty}\cap C(\R^N).
\end{align*}
In particular, we infer that
\begin{align*}
\lim_k \int_{\R^{N+1}_{+}} k_s y^{1-2s}(|\nabla \tilde w_{1,k}|^2  + |\nabla \tilde w_{2,k}|^2 ) dxdy & = \mu(\R^{N+1}_{+}), \\
\lim_k\int_{\R^{N}} |\tilde w_{1,k}(x,0)|^{p}| \tilde w_{2,k}(x,0)|^q  dx & = \nu(\R^N).
\end{align*}
\noindent {\em Claim:} $I\not=\emptyset$.

\noindent{\em Verification:} if $I=\emptyset$, we would have $\int_{B}|w_{1}(x,0)|^{p}|w_{2}(x,0)|^{q} dx=1$ and
\begin{align*}
C_{p,q}\S^{0}_{s,2^{*}_{s}}=&\lim_{k} \int_{C_{B}} k_s y^{1-2s}(|\nabla \tilde w_{1,k}|^2 +
 |\nabla \tilde w_{2,k}|^2) dx dy\\
 \geq &\int_{C_{B}}(k_s y^{1-2s}(|\nabla w_{1}|^2 +
|\nabla w_{2}|^2)) dx dy,
\end{align*}
yielding  $C_{p,q}\S^{0}_{s,2^{*}_{s}}=\int_{C_{B}} k_s y^{1-2s}(|\nabla w_{1}|^2 + |\nabla w_{2}|^2) dx dy,$
namely a contradiction to Corollary \ref{noachieve}.
\vskip3pt
\noindent {\em Claim:} $I$ contains only one point and   $w_1=w_2=0.$

\noindent{\em Verification:} We argue by contradiction and consider the following three cases:
\begin{itemize}
\item [i)] $w_1 \neq 0$ and $w_2 \neq 0$;
\item [ii)] $w_1 \neq 0$ and $w_2 = 0$;
\item [iii)] $w_1 =0$ and $w_2 \neq 0.$
\end{itemize}
In the case i), we have $\sum_{j\in J}\nu_j  \in (0,1).$ Notice that
\begin{align*}
C_{p,q}\S^{0}_{s,2^{*}_{s}}&=\lim_{k} \int_{C_{B}}k_s y^{1-2s}(|\nabla \tilde w_{1,k}|^2 +
 |\nabla \tilde w_{2,k}|^2) dx dy\\
 & \geq \int_{C_{B}}(k_s y^{1-2s}(|\nabla w_{1}|^2 + |\nabla w_{2}|^2)) dx dy +
C_{p,q} \S^{0}_{s,2^{*}_{s}}\sum_{j\in I}  \nu^{2/2^{*}_{s}}_{j},
\end{align*}
as well as
$$
1=\nu(\R^N)=\int_{B}|w_1(x,0)|^p |w_2(x,0)|^q dx +\sum_{j\in I}\nu_j .
$$
These facts  imply that
\begin{align*}
\int_{C_{B}} k_s y^{1-2s}(|\nabla w_{1}|^2 +|\nabla w_{2}|^2) dx dy &\leq  C_{p,q}\S^{0}_{s,2^{*}_{s}}\Big(1-
\sum_{j\in I} \nu^{2/2^{*}_{s}}_{j}  \Big) \\
& \leq C_{p,q}\S^{0}_{s,2^{*}_{s}}\Big(1-\sum_{j\in I} \nu_{j} \Big)^{2/2^{*}_{s}}\\
&=\S^{0}_{s,p,q}\Big(\int_{B}|w_1(x,0)|^p |w_2(x,0)|^q dx \Big)^{2/2^{*}_{s}},
\end{align*}
which is a contradiction. In the case ii) or iii), we have $\sum_{j\in J}\nu_j =1.$ Notice that
\begin{equation*}
C_{p,q}\S^{0}_{s,2^{*}_{s}}\geq \int_{C_{B}}(k_s y^{1-2s}(|\nabla w_{1}|^2 +
 |\nabla w_{2}|^2)) dx dy +C_{p,q} \S^{0}_{s,2^{*}_{s}}\sum_{j\in I} \nu^{2/2^{*}_{s}}_{j}.
\end{equation*}
This  implies, as above, that
\begin{equation*}
\int_{C_{B}} k_s y^{1-2s}(|\nabla w_{1}|^2 + |\nabla w_{2}|^2) dx dy \leq C_{p,q}\S^{0}_{s,2^{*}_{s}}\Big(1-\sum_{j\in I} \nu_{j}\Big)^{2/2^{*}_{s}}=0,
\end{equation*}
which is a contradiction. Then $w_1=w_2=0.$
We claim that $J$ is singleton. Notice again
\begin{align*}
1  \geq  \sum_{j\in I}  \nu^{2/2^{*}_{s}}_{j}
\geq  \Big(\sum_{j\in I}  \nu_{j}\Big)^{2/2^{*}_{s}}
=1,
\end{align*}
so there is at most one $j^{*} \in I$ such that $\nu_{j^{*}}\neq 0,$ proving the claim.
Hence there exists $x_0\in \overline{B}$ with
\begin{equation}
\label{concentration3}
  k_sy^{1-2s}(|\nabla \tilde w_{1,k}|^2  + |\nabla \tilde w_{2,k}|^2)  \rightharpoonup \mu_{0}\delta_{x_{0}}, \quad
 |\tilde w_{1,k}(x,0)|^{p}| \tilde w_{2,k}(x,0)|^q  \rightharpoonup \nu_{0}\delta_{x_{0}},
\end{equation}
in the sense of measure with $\mu_0 \geq C_{p,q}\S^{0}_{s,2^{*}_{s}}\nu^{2/2^{*}_{s}}_{0}.$ Taking into
account the relation \eqref{connection} between $\tilde w_{i,k}$ and $w_{i,k}$ the same conclusion
follows for the $w_{i,k}$.
\vskip3pt
\noindent
Assume by contradiction that $x_0\in B$. Then it follows $\dist(x_0, \partial B)=\sigma$, for $\sigma\in (0,1)$.
Notice that $|w_{1,k}(x,0)|^{p_k}\leq |w_{1,k}(x,0)|^{p}+o_k(1)$.
By the concentration behavior of the sequence $|w_{1,k}(x,0)|^{p}| w_{2,k}(x,0)|^q$ stated in  \eqref{concentration3}, there exists
$\varphi\in L^\infty\cap C(\R^N)$ with $\varphi(x_0)=0$ and
\begin{equation}
\int_{\R^N}  |w_{1,k}(x,0)|^{p}|w_{2,k}(x,0)|^q\chi_{B\setminus B(x_0,\sigma/2)}(x) dx\leq
\int_{\R^N}  |w_{1,k}(x,0)|^{p}|w_{2,k}(x,0)|^q \varphi(x)dx=o_k(1).
\end{equation}
Since $\int_{B}  |w_{2,k}(x,0)|^q dx\leq C$ by the Sobolev inequality \cite[formula (2.11)]{barrios}, then we conclude
\begin{align}
\label{dill}
 \int_B |x|^{\alpha}|w_{1,k}(x,0)|^{p_{k}}|w_{2,k}(x,0)|^q dx&=\int_{B(x_0,\sigma/2)} |x|^{\alpha}|w_{1,k}(x,0)|^{p_{k}}|w_{2,k}(x,0)|^q dx  \notag \\
&  +  \int_{\R^N}  |x|^{\alpha}|w_{1,k}(x,0)|^{p_{k}}|w_{2,k}(x,0)|^q \chi_{B\setminus B(x_0,\sigma/2)}(x) dx  \notag \\
   & \leq (1-\sigma/2)^\alpha\int_{B(x_0,\sigma/2)} |w_{1,k}(x,0)|^{p_{k}} |w_{2,k}(x,0)|^q dx  \\
  & +  \int_{\R^N}  |w_{1,k}(x,0)|^{p}|w_{2,k}(x,0)|^q \chi_{B\setminus B(x_0,\sigma/2)}(x)dx +o_k(1)  \notag \\
   & \leq \Lambda(\sigma)^{2^*_s/2} \Big(\int_{B} |w_{1,k}(x,0)|^{p_{k}} |w_{2,k}(x,0)|^q dx+o_k(1)\Big),  \notag
 \end{align}
 where $\Lambda(\sigma):=(1-\sigma/2)^{2\alpha/2^*_s}\in (0,1)$.
%
By formula \eqref{dill}, on account of by Lemma \ref{lemma3.2}, it follows that
\begin{align*}
 C_{p,q}\S^{0}_{s,2^{*}_{s}} &=\lim_k\frac{\displaystyle\int_{C_B} k_sy^{1-2s}(|\nabla w_{1,k}|^2 + |\nabla w_{2,k}|^2)dxdy}{\Big(\displaystyle\int_B |x|^{\alpha}|w_{1,k}(x,0)|^{p_{k}}
 |w_{2,k}(x,0)|^q dx\Big)^{2/(p_{k}+q)}}  \\
&\geq  \frac{1}{\Lambda(\sigma)}\lim_k\frac{\displaystyle\int_{C_B} k_sy^{1-2s}(|\nabla w_{1,k}|^2 + |\nabla w_{2,k}|^2)dxdy}
 {\Big(\displaystyle\int_{B} |w_{1,k}(x,0)|^{p_{k}} |w_{2,k}(x,0)|^q dx\Big)^{2/(p_{k}+q)}}>C_{p,q}\S^{0}_{s,2^{*}_{s}} ,
\end{align*}
which is a contradiction, since $\Lambda^{-1}(\sigma)>1$. The proof of Theorem \ref{main} is complete. \qed

\section{Proof of Theorem \ref{main2}}
\noindent
Let $(w_{1,\epsilon},w_{2,\epsilon})\in H$ be a nonnegative solution to~\eqref{min-probbloc}.
Then, up to a multiplicative constant depending upon the Lagrange multiplier, we may assume that
$(w_{1,\epsilon},w_{2,\epsilon})$ solves the system \eqref{system-ww}.
In particular, identity \eqref{ident} follows.
Hence, from Lemma~\ref{lemma3.2}, we infer
\begin{equation}
\label{limit-norm}
\lim_{\eps\to 0} \displaystyle\int_{C_{B}} k_s y^{1-2s}(|\nabla w_{1,\epsilon}|^2 + |\nabla w_{2,\epsilon}|^2)dx dy
= \frac{(\S^{0}_{s,p,q}(C_B))^{\frac{N}{2s}}}{2^{\frac{N-2s}{2s}}}=
\frac{(\S^{0}_{s,p,q}(\R^{N+1}_+))^{\frac{N}{2s}}}{2^{\frac{N-2s}{2s}}}.
\end{equation}
We know that $(w_{1,\epsilon},w_{2,\epsilon})$ is a solution of the system
\begin{equation*}
\left\{\begin{array}{l@{\quad }c}
 -\div(y^{1-2s}\nabla  w_{1,\epsilon}) = 0,\quad  -\div(y^{1-2s}\nabla  w_{2,\epsilon}) = 0, & x\in
C_{B},\\
k_s y ^{1-2s}\frac{\partial w_{1,\epsilon}}{\partial \nu}= \frac
{2p_\varepsilon}{p_\varepsilon+q} |x|^\alpha   w_{1,\epsilon}(x, 0) ^
{p_\epsilon-1}  w_{2,\epsilon}(x, 0)^q,& x\in B,\\
k_s y ^{1-2s}\frac{\partial  w_{2,\epsilon}}{\partial \nu}= \frac
{2q}{p_\epsilon+q}|x|^\alpha w_{1,\epsilon}(x, 0)^{p_\epsilon}  w_{2,\epsilon}(x, 0)^{q-1},& x \in B,\\
 w_{1,\epsilon} =  w_{2,\epsilon} = 0,& x \in
\partial_L C_{B}.
\end{array}\right.
\end{equation*}
Then, we can assume $w_{i,\epsilon} \in C^{\tau}(B),$ for some $\tau \in (0,1)$.
There exist $ x_{1,\epsilon}, x_{2,\epsilon} \in \overline{B}$ such that
\begin{equation}
\label{Maxs}
M_{i,\epsilon} =w_{i,\epsilon}(x_{i,\epsilon},0)=\sup_{(x,y)\in\overline{B}\times (0,\infty)}w_{i,\epsilon}(x,y), \quad i=1,2.
\end{equation}
In fact, let $ x_{1,\epsilon}, x_{2,\epsilon} \in \overline{B}$ be such that
\begin{equation*}
M_{i,\epsilon}:=\sup_{x\in\overline{B} }w_{i,\epsilon}(x,0) =w_{i,\epsilon}(x_{i,\epsilon},0), \quad i=1,2.
\end{equation*}
Then the second equality in \eqref{Maxs} holds, since we have the following maximum principle

\begin{lemma}\label{lemma4.1}
$w_{i,\epsilon}(x,y)\leq M_{i,\epsilon}$ for a.e.\ $(x,y)\in B\times (0,\infty)$, for $i=1,2$.
\end{lemma}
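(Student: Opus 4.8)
The plan is to prove the pointwise bound $w_{i,\epsilon}(x,y)\le M_{i,\epsilon}$ by a maximum principle argument adapted to the degenerate elliptic operator $-\div(y^{1-2s}\nabla\cdot)$ on the half-cylinder $C_B=B\times(0,\infty)$. The key observation is that each $w_{i,\epsilon}$ is a nonnegative $A_2$-weighted harmonic function in $C_B$: it satisfies $-\div(y^{1-2s}\nabla w_{i,\epsilon})=0$ in $C_B$, vanishes on the lateral boundary $\partial_L C_B=\partial B\times(0,\infty)$, and on the bottom $B\times\{0\}$ its weighted normal derivative equals a nonnegative function (the right-hand side of the Neumann-type condition in \eqref{system-ww}, which is nonnegative since $w_{1,\epsilon},w_{2,\epsilon}\ge 0$). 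So morally $w_{i,\epsilon}$ is subharmonic-like and its supremum should be attained on the part of the boundary where the data live, namely $B\times\{0\}$, where it equals $M_{i,\epsilon}$ by the choice of $x_{i,\epsilon}$.

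First I would fix $i$ and write $W:=w_{i,\epsilon}$, $M:=M_{i,\epsilon}$, and consider the truncation $v:=(W-M)^+\in H^1_{0,L}(C_B)$ (after subtracting the constant $M$; note $W-M\le 0$ on $B\times\{0\}$ so the trace of $v$ on the bottom vanishes, and $W-M\le 0\le$ nothing forces it on the lateral boundary but there $W=0\le M$ so again $v=0$ there). Using $v$ as a test function in the weak formulation of the equation for $W$, one gets
\begin{equation*}
\int_{C_B} y^{1-2s}\nabla W\cdot\nabla v\, dx\, dy = k_s^{-1}\int_{B\times\{0\}} \Big(k_s y^{1-2s}\tfrac{\partial W}{\partial\nu}\Big)\, v(x,0)\, dx = 0,
\end{equation*}
because the trace $v(x,0)$ vanishes a.e.\ on $B\times\{0\}$ (since $W(x,0)\le M$ there). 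But $\nabla W\cdot\nabla v = |\nabla v|^2$ on the set $\{W>M\}$ and $\nabla v=0$ elsewhere, so $\int_{C_B}y^{1-2s}|\nabla v|^2\,dx\,dy=0$. Since $y^{1-2s}>0$ for $y>0$, this forces $\nabla v\equiv 0$, hence $v$ is constant on each connected component of $C_B$; as $v\in H^1_{0,L}(C_B)$ vanishes on the lateral boundary (and $C_B$ is connected), $v\equiv 0$, i.e.\ $W\le M$ a.e.\ in $C_B$.

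The main technical point to be careful about is the legitimacy of using $v=(W-M)^+$ as a test function — one must check $v\in H^1_{0,L}(C_B)$ (boundedness of $W$ near $y=0$, which follows from the regularity $w_{i,\epsilon}\in C^\tau$ up to $B\times\{0\}$ already invoked, plus the standard decay/integrability of $s$-harmonic extensions for large $y$ so that $v$ has finite weighted Dirichlet energy) and that the boundary term genuinely vanishes, for which the crucial input is precisely that the supremum over $\overline{B}$ of $W(\cdot,0)$ equals $M$, making $v(x,0)=0$ a.e. A secondary point is justifying the integration by parts / Green's identity in the weighted space $H^1_{0,L}(C_B)$, which is by now standard from \cite{colorado,barrios,cabretan} and can be quoted. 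With these in hand the argument above gives the stated maximum principle, and combined with the definition of $x_{i,\epsilon}$ it yields the second equality in \eqref{Maxs}.
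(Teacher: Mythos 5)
Your proof is correct and takes essentially the same approach as the paper: both test the weak formulation with the truncation $(w_{i,\epsilon}-M_{i,\epsilon})^+\in H^1_{0,L}(C_B)$, observe that its trace on $B\times\{0\}$ vanishes so the boundary term drops out, and conclude that the weighted Dirichlet energy of the truncation is zero, forcing it to vanish. You spell out a few more of the routine details (vanishing on the lateral boundary, connectedness, admissibility of the test function) than the paper does, but the argument is the same.
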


\begin{proof}
Define $\tau_i(x,y) :=(w_{i,\epsilon}(x,y)- M_{i,\epsilon})^{+}$ for $(x,y)\in B\times (0,\infty).$ Then, testing in $(LS)$, we obtain
\begin{align*}
& k_s\int_{C_{B}}y^{1-2s}|\nabla \tau_{1}(x,y)|^2 dxdy=\frac{2p_{\epsilon}}{p_{\epsilon}+q}
\int_{B}|x|^{\alpha}w_{1,\epsilon}(x,0)^{p_{\epsilon}-1}w_{2,\epsilon}(x,0)^{q}\tau_{1}(x,0) dx=0, \\
& k_s\int_{C_{B}}y^{1-2s}|\nabla \tau_{2}(x,y)|^2 dxdy=\frac{2q}{p_{\epsilon}+q}\int_{B}|x|^{\alpha}w_{1,\epsilon}(x,0)^{p_{\epsilon}}w_{2,\epsilon}(x,0)^{q-1}\tau_{2}(x,0) dx=0.
\end{align*}
Then $\tau_{i}\equiv 0$ for $i=1,2$, yielding he conclusion.
\end{proof}

\begin{lemma}\label{lemma4.2} For all $i=1,2$,
we have $M_{i,\epsilon} \to +\infty$ as $p_{\epsilon}+q \to 2^{*}_{s}$.
\end{lemma}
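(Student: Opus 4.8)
The plan is to argue by contradiction, supposing that along some subsequence $M_{i,\epsilon}$ stays bounded for at least one index $i\in\{1,2\}$. The strategy is to show that boundedness of $M_{i,\epsilon}$ forces the sequence $(w_{1,\epsilon},w_{2,\epsilon})$ to have uniformly bounded trace norms in $L^\infty(B)$, hence no concentration can occur, contradicting the concentration statement from Theorem~\ref{main} (in the form of \eqref{concentration3} and the analogous statements for the original $w_{i,k}$) together with Corollary~\ref{noachieve}.

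More concretely, first I would recall that by Theorem~\ref{main} the sequences satisfy $|w_{1,\epsilon}(x,0)|^{p_\epsilon}|w_{2,\epsilon}(x,0)|^q \rightharpoonup \nu_0\delta_{x_0}$ and $k_sy^{1-2s}(|\nabla w_{1,\epsilon}|^2+|\nabla w_{2,\epsilon}|^2)\rightharpoonup \mu_0\delta_{(x_0,0)}$, with $\mu_0,\nu_0>0$, after the normalization \eqref{connection}; by \eqref{ident} and \eqref{limit-norm} the full $H$-norm does not vanish and concentrates at the single point $x_0\in\partial B$. The key point is that concentration at a point together with the trace inequality is incompatible with a uniform $L^\infty$ bound on $w_{1,\epsilon}(\cdot,0)$ and $w_{2,\epsilon}(\cdot,0)$. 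Indeed, if $M_{1,\epsilon}\leq C$ and $M_{2,\epsilon}\leq C$ uniformly, then for any fixed small ball $B(x_0,r)$ the mass $\int_{B(x_0,r)}|w_{1,\epsilon}(x,0)|^{p_\epsilon}|w_{2,\epsilon}(x,0)|^q dx\leq C^{p_\epsilon+q}|B(x_0,r)|\to 0$ as $r\to 0$ uniformly in $\epsilon$, which contradicts the fact that the limiting measure has an atom of mass $\nu_0>0$ at $x_0$. This handles the case in which \emph{both} maxima are bounded.

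The remaining work is to rule out the mixed situation where exactly one of $M_{1,\epsilon}$, $M_{2,\epsilon}$ is bounded while the other blows up; this is where the coupling of the system matters and is, I expect, the main obstacle. Suppose, say, $M_{2,\epsilon}\leq C$ but $M_{1,\epsilon}\to\infty$. Then $\int_B |w_{2,\epsilon}(x,0)|^q\,dx$ is controlled (either directly by the $L^\infty$ bound, or via the Sobolev trace inequality as already used in the excerpt just before \eqref{dill}), and one can estimate, using H\"older with exponents matching $2^*_s$,
\[
\int_B |x|^\alpha |w_{1,\epsilon}(x,0)|^{p_\epsilon}|w_{2,\epsilon}(x,0)|^q\,dx \leq C^q \int_B |w_{1,\epsilon}(x,0)|^{p_\epsilon}\,dx \leq C\,\|w_{1,\epsilon}(\cdot,0)\|_{L^{2^*_s}(B)}^{p_\epsilon}\leq C,
\]
which is consistent; the point is rather that with $w_{2,\epsilon}(\cdot,0)$ uniformly bounded, the interaction density $|w_{1,\epsilon}(x,0)|^{p_\epsilon}|w_{2,\epsilon}(x,0)|^q$ cannot concentrate as a Dirac mass unless $|w_{1,\epsilon}(x,0)|^{p_\epsilon}$ concentrates — but by the rescaling argument underlying Theorem~\ref{main2} (introducing $\lambda_\epsilon$ with $\lambda_\epsilon^{(N-2s)/2}M_{1,\epsilon}=1$, $\lambda_\epsilon\to 0$), the blow-up profile of $w_{1,\epsilon}$ is an $s$-harmonic extension of a nonconstant Aubin--Talenti bubble $U$, and the corresponding blow-up limit of the rescaled \emph{second} equation forces $w_{2,\epsilon}$ to also develop a bubble at the same scale: otherwise the right-hand side $\frac{2q}{p_\epsilon+q}|x|^\alpha w_{1,\epsilon}^{p_\epsilon}w_{2,\epsilon}^{q-1}$ of the second equation, after rescaling by $\lambda_\epsilon$, would converge to $0$ if $w_{2,\epsilon}(x_{1,\epsilon},0)$ stayed bounded, contradicting the fact that the rescaled $w_{2,\epsilon}$ solves a nontrivial limiting equation whose solution is of the form $BU$ (Lemma~\ref{lem:0.2}). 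Since $U$ is unbounded near its center, this gives $M_{2,\epsilon}\to\infty$, contradicting the assumption.

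To organize the argument cleanly I would: (i) fix the subsequence and the concentration point $x_0$ from Theorem~\ref{main}; (ii) show that if $M_{1,\epsilon}+M_{2,\epsilon}$ is bounded then $|w_{1,\epsilon}(x,0)|^{p_\epsilon}|w_{2,\epsilon}(x,0)|^q$ is uniformly bounded in $L^\infty(B)$, hence the concentrated measure $\nu$ must be absolutely continuous with bounded density, so $\nu(\{x_0\})=0$, contradicting $\nu_0>0$; (iii) in the mixed case, introduce $\lambda_\epsilon$ via $\lambda_\epsilon^{(N-2s)/2}M_{1,\epsilon}=1$ (resp.\ $\bar\lambda_\epsilon$ for $M_{2,\epsilon}$), rescale the system around $x_{1,\epsilon}$, pass to the limit using the regularity and the classification in Lemma~\ref{lem:0.2}, and conclude that boundedness of one maximum propagates to boundedness of the solution, again contradicting the nontriviality of the concentrated mass. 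The hard part will be justifying the passage to the limit in the rescaled system in step (iii) — in particular controlling the effect of $|x|^\alpha$ (which tends to $|x_0|^\alpha=1$ near the concentration point, since $x_0\in\partial B$, so this is harmless) and ensuring the rescaled sequences converge in a strong enough sense to identify the limit with a solution of \eqref{eq:1.1-p}; once that is in place, Lemma~\ref{lem:0.2} forces both components of the limit to be positive multiples of the unbounded bubble $U$, which is the desired contradiction.
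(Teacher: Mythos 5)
Your proof is correct in spirit but considerably more elaborate than the paper's, and the extra complexity is avoidable. The paper's argument is a one--line H\"older estimate that treats all cases at once: from Theorem~\ref{main} one has $w_{i,\epsilon}\rightharpoonup 0$ in $H^1_{0,L}(C_B)$, hence by the compact trace embedding $w_{i,\epsilon}(\cdot,0)\to 0$ strongly in $L^r(B)$ for every $r<2^*_s$; on the other hand, identity~\eqref{ident} together with~\eqref{limit-norm} gives a positive constant $\sigma$ with $\sigma\leq\int_B|x|^\alpha w_{1,\epsilon}^{p_\epsilon}w_{2,\epsilon}^q\,dx$; and if, say, $M_{2,\epsilon}\leq C$ along a subsequence, then
\[
\int_B|x|^\alpha w_{1,\epsilon}^{p_\epsilon}w_{2,\epsilon}^q\,dx\leq \|w_{2,\epsilon}(\cdot,0)\|_\infty^q\,\|w_{1,\epsilon}(\cdot,0)\|_{L^{p_\epsilon}(B)}^{p_\epsilon}\to 0
\]
because $p_\epsilon<2^*_s$ is strictly subcritical --- a contradiction. (The case $M_{1,\epsilon}\leq C$ is symmetric, using $\|w_{1,\epsilon}\|_\infty^{p_\epsilon}\|w_{2,\epsilon}\|_{L^q}^q$ and $q<2^*_s$.) The crucial point you miss is that this estimate needs only \emph{one} of the maxima to be bounded: your step (ii) instead uses the pointwise bound $w_1^{p_\epsilon}w_2^q\leq C^{p_\epsilon+q}$, which genuinely requires both $M_{1,\epsilon}$ and $M_{2,\epsilon}$ bounded, and this is what forces you into a separate mixed-case analysis. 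That mixed-case step (iii) is in the right spirit (a rescaled second component would vanish in the blow-up limit and decouple the first equation, contradicting $\tilde w_1(0,0)=1$), but as written it imports heavy machinery --- rescaling, Schauder estimates, the Liouville-type classification of Lemma~\ref{lem:0.2} --- that properly belongs to the proof of Theorem~\ref{main2} and which the paper deliberately avoids at this stage; it also leaves the identification of the rescaled limit and its membership in $H^1_{0,L}(\R^{N+1}_+)$ unjustified. Nothing in your argument is wrong, but the paper's single subcritical-exponent H\"older bound supersedes both of your cases.
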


\begin{proof}
Suppose by contradiction that there exist $C>0$ and a sequence $\{\epsilon_n\}\subset\R^+$ such that
$p_{\epsilon_n} +q \to 2^{*}_{s}$ and $M_{2,\epsilon_n}\leq C$, for all $n\in\N$.
Since $(w_{i,\epsilon_n})$ is bounded in $H^{1}_{0,L}(C_{B})$, up to a subsequence, by
the conclusions of Theorem \ref{main} we get $w_{i,\epsilon_n}\rightharpoonup 0$ weakly in $H^{1}_{0,L}(C_{B})$ and
$w_{i,\epsilon_n}\to 0$ in $L^r(B)$, for every $r<2^*_s$. Then, from identity \eqref{ident} and formula \eqref{limit-norm},
there exists a positive constant $\sigma$ independent of $\eps_n$ such that
$$
0<\sigma\leq\int_{B}|x|^{\alpha}|w_{1,\eps_n}(x,0)|^{p_{\epsilon}}|w_{2,\eps_n}(x,0)|^{q}dx
\leq \|w_{2,\eps_n}(x,0)\|^{q}_{\infty}\|w_{1,\eps_n}(x,0)\|_{L^{p_{\eps_n}}(B)}^{p_{\eps_n}}\leq C o_n(1),
$$
which yields a contradiction.
\end{proof}

\noindent
Now we want to recall some general Poho\v zaev type identity.
Consider the following system
$$
 \left\{ \begin{array}{rcl}
 -\div ( y^{1-2s}\nabla w_1)=0, & \mbox{in}& C_{B}=B \times (0,\infty), \noindent\\
-\div ( y^{1-2s}\nabla w_2)=0, & \mbox{in}& C_{B}=B \times (0,\infty), \noindent\\
 w_1=w_2=0,& \mbox{on}& \partial_L C_{B}=\partial B \times (0,\infty),\\
 k_{s} y ^{1-2s}\frac{\partial w_1}{\partial \nu}=C_1 w_1(x,0)^{p-1}w_2(x,0)^q, & \mbox{on}& B \times\{0\},\noindent\\
k_{s} y ^{1-2s}\frac{\partial w_2}{\partial \nu}=C_2 w_1(x,0)^p w_2(x,0)^{q-1}, & \mbox{on}& B \times\{0\},\noindent
 \end{array}\right.
\leqno{(LG)}
$$
where $\frac{\partial }{\partial \nu}$  denotes the outward normal derivative, and $\nu$ is exterior normal vetor to $\partial B.$
For the scalar case, the next result was obtained in \cite{colorado}, while for the system we refer to \cite{choi}.

\begin{theorem}\label{Pohozaev}
Let $p+q=2^*_s$. Then system $(LG)$ does not admit any nontrivial nonnegative solution.
\end{theorem}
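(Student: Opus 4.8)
The plan is to prove the nonexistence of nontrivial nonnegative solutions to $(LG)$ when $p+q=2^*_s$ by deriving a Pohozaev-type identity on the half-cylinder $C_B = B\times(0,\infty)$ and combining it with the Rellich--Pohozaev identity obtained from testing the equations against the solutions themselves. First I would fix a nonnegative solution $(w_1,w_2)$ of $(LG)$ and note, by the regularity theory cited earlier, that it is H\"older continuous up to the boundary, so all the integrations by parts below are justified (the extra degenerate weight $y^{1-2s}$ requires the usual care near $\{y=0\}$, which is handled by the trace estimates in \cite{colorado,barrios}). The two identities I want are: (a) the \emph{energy identity}, obtained by testing the $w_1$-equation with $w_1$ and the $w_2$-equation with $w_2$ and adding,
\begin{equation*}
k_s\int_{C_B} y^{1-2s}(|\nabla w_1|^2+|\nabla w_2|^2)\,dx\,dy = (C_1+C_2)\int_B w_1(x,0)^p w_2(x,0)^q\,dx;
\end{equation*}
and (b) the \emph{Pohozaev identity}, obtained by testing with the Morrey--Pohozaev vector field $(x,y)\cdot\nabla w_i$, i.e.\ multiplying $-\div(y^{1-2s}\nabla w_i)=0$ by $(x\cdot\nabla_x w_i + y\,\partial_y w_i)$, integrating over $C_B$, and integrating by parts.

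The key computation is step (b). Carrying out the integration by parts for the weighted operator, the bulk term $\int_{C_B} y^{1-2s}\nabla w_i\cdot\nabla\big((x,y)\cdot\nabla w_i\big)$ produces, after using $\div(y^{1-2s}\,(x,y)) = (N+2-2s)\,y^{1-2s}$, a multiple of the Dirichlet energy $\tfrac{N-2s}{2}\int_{C_B} y^{1-2s}|\nabla w_i|^2$ together with boundary contributions on $\partial_L C_B = \partial B\times(0,\infty)$ and on $B\times\{0\}$. On the lateral boundary $\partial_L C_B$, since $w_i=0$ there, only the normal derivative survives and one gets a term of the form $-\tfrac{k_s}{2}\int_{\partial_L C_B} y^{1-2s}(x\cdot\nu)\big(|\partial_\nu w_1|^2+|\partial_\nu w_2|^2\big)$; because $B$ is a ball and $\nu$ is the outward normal, $x\cdot\nu=1>0$ on $\partial B$, so this term has a \emph{definite sign} — this is exactly the mechanism that forces triviality on a (star-shaped) bounded domain, mirroring the classical Pohozaev argument. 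On the flat boundary $B\times\{0\}$, using the Neumann conditions $k_s y^{1-2s}\partial_\nu w_i = C_i\,w_1^{p-1}w_2^q$ (resp.\ $C_2\, w_1^p w_2^{q-1}$) one converts the boundary term into $\int_B x\cdot\nabla_x\big(\tfrac{C_1+C_2}{p+q} w_1^p w_2^q\big)$-type expressions, and integrating by parts in $x$ on $B$ (again picking up an $x\cdot\nu=1$ boundary term on $\partial B$, which vanishes since $w_i(\cdot,0)=0$ on $\partial B$) yields a multiple $-\tfrac{N}{p+q}(C_1+C_2)\int_B w_1^p w_2^q$. Collecting everything, the Pohozaev identity reads
\begin{equation*}
\frac{N-2s}{2}\,k_s\int_{C_B} y^{1-2s}(|\nabla w_1|^2+|\nabla w_2|^2)\,dx\,dy + \frac{k_s}{2}\int_{\partial_L C_B} y^{1-2s}(x\cdot\nu)\big(|\partial_\nu w_1|^2+|\partial_\nu w_2|^2\big)\,d\sigma\,dy = \frac{N}{p+q}(C_1+C_2)\int_B w_1(x,0)^p w_2(x,0)^q\,dx.
\end{equation*}

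Finally I would combine (a) and (b): substituting the energy identity into the Pohozaev identity eliminates $\int_B w_1^p w_2^q$ and leaves
\begin{equation*}
\Big(\frac{N-2s}{2}-\frac{N}{p+q}\Big)k_s\int_{C_B} y^{1-2s}(|\nabla w_1|^2+|\nabla w_2|^2)\,dx\,dy + \frac{k_s}{2}\int_{\partial_L C_B} y^{1-2s}(x\cdot\nu)\big(|\partial_\nu w_1|^2+|\partial_\nu w_2|^2\big)\,d\sigma\,dy = 0.
\end{equation*}
Since $p+q=2^*_s=\tfrac{2N}{N-2s}$, the coefficient $\tfrac{N-2s}{2}-\tfrac{N}{p+q}$ is exactly $0$, so the first term drops out entirely, and we are left with $\tfrac{k_s}{2}\int_{\partial_L C_B} y^{1-2s}(x\cdot\nu)\big(|\partial_\nu w_1|^2+|\partial_\nu w_2|^2\big)=0$. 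As $x\cdot\nu=1$ on $\partial B$ and the integrand is nonnegative, this forces $\partial_\nu w_i=0$ on $\partial_L C_B$; together with $w_i=0$ there, a unique-continuation / Cauchy-data argument for the degenerate elliptic equation $\div(y^{1-2s}\nabla w_i)=0$ (or, more elementarily, extending $w_i$ by $0$ across $\partial_L C_B$ to obtain a global solution that vanishes on an open set) gives $w_i\equiv0$, hence $(w_1,w_2)$ is trivial. The main obstacle I anticipate is making the integration by parts in step (b) rigorous in the presence of the weight $y^{1-2s}$ and the possible lack of smoothness of $w_i$ up to the corner $\partial B\times\{0\}$; the standard device is to truncate (work on $B_{1-\delta}\times(\delta,1/\delta)$, or use a cut-off $\eta_\varepsilon(y)$ near $y=0$ as in \cite{colorado}), derive the identity there, and pass to the limit using the finiteness of the weighted Dirichlet energy and the trace estimates — this is where one must invoke the precise regularity statements from \cite{colorado,cabre,capela} rather than just H\"older continuity, exactly as in the scalar case treated in \cite{colorado} and the system case in \cite{choi}.
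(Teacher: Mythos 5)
The paper does not actually prove Theorem~\ref{Pohozaev}: it only states the result and, in the sentence immediately preceding the theorem, attributes the scalar case to \cite{colorado} and the system case to \cite{choi}. Your proposal therefore supplies a proof where the paper gives a citation, and the strategy you describe (energy identity plus a weighted Pohozaev identity tested against $(x,y)\cdot\nabla w_i$, with the star-shapedness of $B$ producing a signed lateral-boundary term $\tfrac{k_s}{2}\int_{\partial_L C_B}y^{1-2s}(x\cdot\nu)(|\partial_\nu w_1|^2+|\partial_\nu w_2|^2)$) is exactly the standard mechanism used in \cite{colorado,choi}. The main computations are sound: the bulk coefficient $\tfrac{N-2s}{2}$ indeed comes from $\mathrm{div}\big(y^{1-2s}(x,y)\big)=(N+2-2s)y^{1-2s}$, the flat-boundary piece contributes $-\tfrac{N}{p+q}(C_1+C_2)\int_B w_1^p w_2^q$ after a further integration by parts in $x$, and at $p+q=2^*_s$ the interior terms cancel against the energy identity, leaving only the nonnegative lateral-boundary term.

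There are two points you should tighten. First, the passage from $C_1 w_1^{p-1}w_2^q (x\cdot\nabla_x w_1)+C_2 w_1^{p}w_2^{q-1}(x\cdot\nabla_x w_2)$ to $\tfrac{C_1+C_2}{p+q}\,x\cdot\nabla_x\big(w_1^p w_2^q\big)$ is \emph{only} valid when $C_1/p=C_2/q$; for general $C_1,C_2>0$ as in $(LG)$ those two expressions differ. The fix is cheap: replace $(w_1,w_2)$ by $(\alpha w_1,\beta w_2)$ with $\beta/\alpha=\sqrt{qC_1/(pC_2)}$, which transforms the system into one with $C_1/p=C_2/q$ and the same homogeneity, but you should state this normalization explicitly (it is also satisfied automatically in the only place the paper applies the theorem, namely the systems appearing in the proof of Proposition~\ref{nonexistence}, where $C_i$ is proportional to $p$ or $q$). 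Second, the final step — passing from $w_i=\partial_\nu w_i=0$ on $\partial_L C_B$ to $w_i\equiv 0$ — deserves a cleaner justification than a generic appeal to unique continuation: since $w_i\geq 0$ solves $\mathrm{div}(y^{1-2s}\nabla w_i)=0$ with an $A_2$ weight, either $w_i\equiv 0$ or $w_i>0$ in $C_B$ by the strong maximum principle, and in the latter case the Hopf boundary lemma for such degenerate operators (see \cite{cabre}) forces $\partial_\nu w_i<0$ on $\partial_L C_B$, contradicting the Pohozaev conclusion; moreover if one of $w_1,w_2$ vanishes identically the energy identity immediately kills the other, so there is no loss in assuming both positive. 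With these two repairs the argument is complete and matches the route the paper outsources to \cite{choi}.
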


\noindent
The following non existence result is crucial for our argument.
Consider the following  problem
$$
 \left\{ \begin{array}{rcl}
 -\div ( y^{1-2s}\nabla w_1)=0, & \mbox{in}& \R^{N+1}_{++},\\
-\div ( y^{1-2s}\nabla w_2)=0, & \mbox{in}& \R^{N+1}_{++},\\
 w_1=w_2=0,& \mbox{on}& \{ x_N =0, y>0 \},\\
 k_s y ^{1-2s}\frac{\partial w_1}{\partial \nu}=C_1 w_1(x,0)^{p-1}w_2(x,0)^q,& \mbox{on}& \{ x_N >0, y=0 \},\\
k_s y ^{1-2s}\frac{\partial w_2}{\partial \nu}=C_2 w_1(x,0)^p w_2(x,0)^{q-1},& \mbox{on}& \{ x_N >0, y=0 \}, \\
 \end{array}\right.
\leqno{(LS)}
$$
where $C_1,C_2 >0,$ $\frac{\partial }{\partial \nu}$  denotes the outward normal derivative, $p+q =2^{*}_{s}$ and
$$
\R^{N+1}_{++}:=\{(x_1,x_2,\ldots,x_{N-1},x_N,y)\in \R^{N+1}:  x_N >0, y>0 \}.
$$

\begin{proposition}\label{nonexistence}
Let $w_1,w_2\in H^{1}_{0,L}(\R^{N+1}_{++})$ be a bounded solution of $(LS).$ Then $(w_1,w_2)=(0,0).$
\end{proposition}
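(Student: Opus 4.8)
The plan is to reduce the half-space problem $(LS)$ on $\R^{N+1}_{++}$ to a problem on a half-space which is then contradicted by the Pohozaev-type nonexistence result of Theorem~\ref{Pohozaev}, via a reflection/extension argument in the $x_N$ variable. The key observation is that a bounded solution $(w_1,w_2)$ vanishing on $\{x_N=0,\,y>0\}$ can be extended by \emph{odd} reflection across the hyperplane $\{x_N=0\}$, so that the extended functions, call them $(\tilde w_1,\tilde w_2)$, satisfy $-\div(y^{1-2s}\nabla \tilde w_i)=0$ in the full upper half-space $\R^{N+1}_+=\{y>0\}$, with a conormal boundary condition on $\{y=0\}$ that is still of the same form because the nonlinearities $w_1^{p-1}w_2^q$ and $w_1^p w_2^{q-1}$ are odd under simultaneous sign change of $w_1,w_2$ (indeed $p+q=2^*_s$ need not be an integer, but on $\{x_N>0\}$ we keep the solution unchanged and on $\{x_N<0\}$ we set $\tilde w_i(x',x_N,y):=-w_i(x',-x_N,y)$, and since the right-hand sides are monomials of total degree $p+q-1=2^*_s-1$ which is odd in the pair, the boundary identities are preserved). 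Thus $(\tilde w_1,\tilde w_2)$ solves a system of the type $(LG)$ but posed on $\R^{N+1}_+$ rather than on a bounded cylinder.

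To make the contradiction with Theorem~\ref{Pohozaev} rigorous one should instead work directly with a Pohozaev identity on $\R^{N+1}_{++}$, or equivalently cite the half-space version. Concretely, first I would establish the needed decay/integrability: since $(w_1,w_2)$ is bounded and lies in $H^1_{0,L}(\R^{N+1}_{++})$ with the $y^{1-2s}$ weight, standard elliptic estimates for the degenerate operator $\div(y^{1-2s}\nabla\cdot)$ (as in \cite{caffarelli,colorado}) give H\"older continuity up to $\{y=0\}$ and, together with the critical Sobolev trace embedding, control on the traces $u_i=w_i(\cdot,0)$ in $L^{2^*_s}(\{x_N>0\})$. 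Then I would multiply the first equation by $(x,y)\cdot\nabla w_1$ and the second by $(x,y)\cdot\nabla w_2$, integrate over $\R^{N+1}_{++}\cap B_R$, integrate by parts, and pass to the limit $R\to\infty$; the scaling identity $p+q=2^*_s$ forces the bulk terms to cancel exactly, leaving only a boundary term on $\{x_N=0,\,y>0\}$ and one on $\{x_N>0,\,y=0\}$. Because $w_1=w_2=0$ on $\{x_N=0\}$, the tangential derivatives there vanish and the surviving boundary integral on that face has a sign; the face $\{y=0\}$ contributes a term that vanishes by the critical scaling together with the conormal condition. This yields $\int$ of a nonnegative quantity $=0$, forcing $\partial_{x_N}w_i=0$ on $\{x_N=0\}$, hence $w_i\equiv 0$ by unique continuation for the extended (even-reflected, now harmonic-type) functions — or directly, the Pohozaev identity gives $\|\nabla w_i\|=0$.

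The cleanest route, and the one I would actually write, is: perform the odd reflection in $x_N$ to get $(\tilde w_1,\tilde w_2)\in H^1_{0,L}(\R^{N+1}_+)$ solving the system with the same structure on all of $\{y=0\}$; observe that the boundedness is preserved; then apply the known classification/nonexistence on the whole half-space. Since by Lemma~\ref{lem:0.2} (after passing to traces and using the equivalence with the integral system \eqref{eq:1.2-p}) any nonnegative solution of the limiting critical system on $\R^N$ is a multiple of the Aubin--Talenti bubble $U$, and such a $U$ is everywhere positive, it cannot arise as an odd reflection (which must vanish on the hyperplane $\{x_N=0\}$ and change sign). This contradiction forces $(\tilde w_1,\tilde w_2)=(0,0)$, hence $(w_1,w_2)=(0,0)$.

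The main obstacle I anticipate is the justification of the boundary terms at infinity in the Pohozaev computation: one must verify that the integrals over $\partial B_R$ of quantities like $R\,y^{1-2s}|\nabla w_i|^2$ and $R\,|x|^\alpha$-type trace terms tend to zero along a suitable sequence $R_k\to\infty$. This requires combining the finite energy $\int_{\R^{N+1}_{++}} y^{1-2s}(|\nabla w_1|^2+|\nabla w_2|^2)\,dx\,dy<\infty$ (so that $\liminf_{R\to\infty}R\int_{\partial B_R}y^{1-2s}|\nabla w_i|^2\,d\sigma=0$ along some sequence) with the critical trace control to kill the boundary terms on $\{y=0\}$. The odd-reflection step is clean provided one checks that the weak formulation is respected across $\{x_N=0\}$, which follows from $w_i\in H^1_{0,L}$ vanishing there in the trace sense, so that no distributional mass is created on the reflecting hyperplane.
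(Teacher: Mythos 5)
Your two proposed routes both have genuine gaps, and both diverge from the paper's actual argument, which is a (shifted) Kelvin transform mapping the quarter-space problem to the cylinder over a ball, a decay estimate at the Kelvin center allowing the transformed solution to be extended to the whole bounded cylinder, and then Theorem~\ref{Pohozaev}.

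Concerning your primary route: the odd reflection itself is legitimate (with the natural signed-power convention $w_1^{p-1}w_2^q := |w_1|^{p-2}w_1\,|w_2|^q$, which you should state explicitly, the conormal conditions do propagate across $\{x_N=0\}$), but the conclusion you draw from it does not follow. The reflected pair $(\tilde w_1,\tilde w_2)$ is a \emph{sign-changing} solution of the critical conormal system on the full half-space $\R^{N+1}_+$, whereas Lemma~\ref{lem:0.2} (and the Chen--Li--Ou classification it relies on) only classifies \emph{positive} solutions. Sign-changing solutions of critical problems exist in general, so ``any solution on $\R^{N+1}_+$ must be a positive bubble'' is false, and the contradiction you want (``a positive bubble cannot be an odd reflection'') never gets off the ground. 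This is the fatal gap.

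Concerning your backup route, the direct Pohozaev identity on $\R^{N+1}_{++}$: with the standard dilation field $(x,y)$ the boundary contribution on the face $\{x_N=0,\,y>0\}$ is not ``a signed quantity''; it is \emph{identically zero}, because $(x,y)\cdot\nu=-x_N=0$ there and, since $w_i$ vanishes on that face, $(x,y)\cdot\nabla w_i=\big((x,y)\cdot\nu\big)\partial_\nu w_i=0$ as well. The identity then degenerates to $0=0$ and yields no information; the paper flags this explicitly (``Since $(x,y)\cdot\nu=0$ on $\partial\R^{N+1}_{++}$, one cannot apply directly Poho\v zaev identities''). One could try to salvage this by replacing $(x,y)$ with a shifted field $(x,y)-(x_0,0)$ with $(x_0)_N<0$, which does produce a negative boundary term $\tfrac{(x_0)_N}{2}\int_{\{x_N=0\}}y^{1-2s}(\partial_\nu w_i)^2$, but then you must separately justify the finiteness of that integral and the vanishing of the far-field boundary terms — none of which you address — and this is precisely the technical work that the Kelvin transform in the paper's proof is designed to bypass.
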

\begin{proof}
Since  $(x,y)\cdot \nu=0$ on  $\partial \R^{N+1}_{++},$ one cannot apply directly Poho\v zaev identities.
Whence, we use the Kelvin transformation as in \cite{choi,FW} to study a new system set in a ball.
Let $w_i\in H^{1}_{0,L}(\R^{N+1}_{++})$ be a solution to system $(LS)$. Then, the Kelvin transformation
of $w_i$ is defined by
$$
\widetilde{w_i}(z)=
|z|^{2s-N} w_i\big(\frac{z}{|z|^{2}}\big), \quad  z \in \R^{N+1}_{++}.
$$
and from \cite[Proposition 2.6]{FW}
we infer that $\widetilde{w_i}$ is also a solution to $(LS)$.
By \cite[Corollary 2.1, Proposition 2.4]{Jin}, there exists $\gamma \in (0,1)$
with $\widetilde{w_i}(z)\leq C|z|^{\gamma}$, for $z\in B_{1}(0)$.
Then there exists $C>0$ such that
\begin{equation}\label{2.5}
|w_i(z)|\leq C(1 +|z|^2)^{-\frac{N-2s+\gamma}{2}}, \quad \mbox{for all $z \in \R^{N+1}_{++}$}.
\end{equation}
Arguing as in \cite{chenyang}, denote by $B_{\frac{1}{2}}(\frac{e_N}{2}) \subset \R^N$ the ball centered at $\frac{e_N}{2}$ with radius
$\frac{1}{2}.$ Define
$$
v_i(z):=|z|^{2s-N}w_i\big(-(e_N,0)+ \frac{z}{|z|^2}\big), \quad \mbox{for all}\ z \in \overline{C_{B_{\frac{1}{2}}(\frac{e_N}{2})}}\setminus\{0\}.
$$
By means of \eqref{2.5}, for a positive constant $C$  and for $|z|$ small enough, we have
$$
v_i(z)\leq C|z|^{\gamma},\quad \mbox{for all}\ z\in  \R^{N+1}_{++}\setminus \{0\}.
$$
Therefore, we may extend $v_i$ by $0$  at $0.$
Then, as above, $(v_1,v_2)$ is a weak solution of the system
$$
 \left\{ \begin{array}{rcl}
 -\div ( y^{1-2s}\nabla v_1)=0, & \mbox{in}& C_{B_{\frac{1}{2}}(\frac{e_N}{2})},\\
-\div ( y^{1-2s}\nabla v_2)=0, & \mbox{in}& C_{B_{\frac{1}{2}}(\frac{e_N}{2})},\\
 v_1=v_2=0,& \mbox{on}& \partial_{L} C_{B_{\frac{1}{2}}(\frac{e_N}{2})}, \\
 k_s y ^{1-2s}\frac{\partial v_1}{\partial \nu}=C_1 v_1(x,0)^{p-1} v_2 (x,0)^q,&
\mbox{on}& B_{\frac{1}{2}}(\frac{e_N}{2})\times \{0\},\\
k_s y ^{1-2s}\frac{\partial v_2}{\partial \nu}=C_2 v_1(x,0)^p v_2(x,0)^{q-1},& \mbox{on}&
B_{\frac{1}{2}}(\frac{e_N}{2})\times \{0\}. \\
 \end{array}\right.
\leqno{(LSB)}
$$
Now, applying Theorem \ref{Pohozaev} to system $(LSB)$ we infer that $v_i=0, i=1,2,$ that is,
 $w_i=0, i=1,2$.
\end{proof}

\vskip2pt
\noindent
We are now ready to complete the proof. By Lemma \ref{lemma4.2}, we may assume
$$
M_{1,\epsilon}=w_{1,\epsilon}(x_{1,\epsilon},0)=\!\!
\sup_{(x,y)\in\overline{B}\times (0,\infty)}w_{1,\epsilon}(x,y)\to+\infty,
$$
We may assume $M_{1,\epsilon}\geq M_{2,\epsilon}$. Let $\lambda_{\epsilon} >0$  be such that
$$
\lambda_{\epsilon}^{\frac{N-2s}{2}}M_{1,\epsilon}=1,\qquad
0\leq \lambda_{\epsilon}^{\frac{N-2s}{2}}M_{2,\epsilon}\leq 1,
$$
where $\lambda_{\epsilon} \to 0, \ \mbox{as}\  p_{\epsilon }+q \to 2^{*}_{s}.$
\noindent
Define the scaled functions
\begin{align*}
\tilde w_{1,\epsilon}(x, y) &:=
\lambda_\varepsilon^{\frac{N-2s}2}w_{1,\epsilon}(\lambda_\epsilon x
+ x_{1,\epsilon}, \lambda_\epsilon y),  \\
\tilde w_{2,\epsilon}(x, y) &:=
\lambda_\varepsilon^{\frac{N-2s}2}w_{2,\epsilon}(\lambda_\epsilon x
+ x_{1,\epsilon}, \lambda_\epsilon y),
\end{align*}
$B_\epsilon := \{ x\in \mathbb{R}^N:
\lambda_\epsilon x + x_{1,\epsilon}\in B_1(0)\}$ and
$C_{B_\epsilon}:=B_\epsilon \times (0,\infty).$
Then
$(\tilde w_{1,\epsilon}(x, y), \tilde w_{1,\epsilon}(x, y))$ satisfies
\begin{equation*}
\left\{\begin{array}{l@{\quad }c}
 -\div(y^{1-2s}\nabla \tilde w_{1,\epsilon}) = 0,\quad  -\div(y^{1-2s}\nabla \tilde w_{2,\epsilon}) = 0, & x\in
C_{B_\epsilon}\\
0< \tilde w_{1,\epsilon}\leq 1, \,\,\, 0<\tilde w_{2,\epsilon}\leq 1,
\,\,\, \tilde w_{1,\epsilon}(0,0) = 1, & x\in C_{B_\epsilon}\\
k_s y ^{1-2s}\frac{\partial\tilde w_{1,\epsilon}}{\partial \nu}= \frac
{2p_\varepsilon}{p_\varepsilon+q} |\lambda_\epsilon x +
x_{1,\epsilon}|^\alpha \lambda_\epsilon^{N- \frac
{N-2s}2(p_\epsilon+q)} \tilde w_{1,\epsilon}(x, 0) ^
{p_\epsilon-1} \tilde w_{2,\epsilon}(x, 0)^q,& x\in B_\epsilon,\\
k_s y ^{1-2s}\frac{\partial \tilde w_{2,\epsilon}}{\partial \nu}= \frac
{2q}{p_\epsilon+q}|\lambda_\epsilon x + x_{1,\epsilon}|^\alpha
\lambda_\epsilon^{N- \frac
{N-2s}2(p_\epsilon+q)}\tilde w_{1,\epsilon}(x, 0)^{p_\epsilon}  \tilde w_{2,\epsilon}(x, 0)^{q-1},& x \in B_\epsilon,\\
 \tilde w_{1,\epsilon} = 0,\quad \tilde w_{2,\epsilon} = 0,& x \in
\partial B_\epsilon\cap\partial_L C_{B_\epsilon}.\\
\end{array}\right.
\end{equation*}
Suppose $x_{1,\epsilon}\to x_0$ for some $x_0\in\bar B_1(0)$. We claim that $x_0\in
\partial B_1(0)$. By contradiction, assume that $x_0\in B_1(0)$ and let $d := \frac 12
\dist(x_0,\partial B_1(0))$. Denote
$\mathcal{B}(0,r)=\{z\in\mathbb{R}^{N+1}:|z|<r\}.$
For $\epsilon>0$ small,  both $\tilde w_{1,\epsilon}$ and $\tilde w_{2,\epsilon}$ are well defined in
$\mathcal{B}(0,d/{\lambda_\epsilon})\cap \mathbb{R}^{N+1}_{+},$ and
\[
\sup_{(x,y)\in \mathcal{B}(0,\frac d{\lambda_\epsilon})\cap \mathbb{R}^{N+1}_{+}}
\tilde w_{1,\epsilon}(x, y) = \tilde w_{1,\epsilon}(0,0) =1,\quad
\sup_{(x,y)\in \mathcal{B}(0,\frac d{\lambda_\epsilon})\cap \mathbb{R}^{N+1}_{+}}\tilde w_{2,\epsilon}(x,y)\in (0,1].
\]
Since $M_{1, \varepsilon}\to +\infty$, we have $0\leq
\lambda_\varepsilon\leq 1$, for $\varepsilon>0$ small. Let
$$
h(\epsilon):=\lambda_\epsilon^{N- \frac
{N-2s}2(p_\epsilon+q)} \,\quad {\rm and}\,\quad
h(0):=\lim\limits_{p_\epsilon +q\to 2^*_{s}}h(\epsilon).
$$
Hence, $0\leq h(\epsilon)\leq 1$. Three possibilities may occur, namely
\newline
(1) $h(0) = 0$,
\newline
(2) $h(0) = \beta\in (0,1)$,
\newline
(3) $h(0) = 1$.
\newline
We show that any of these cases yields
a contradiction. We observe that, for any $R>0$,
$B_{R}(0)\subset B_{d/{\lambda_\varepsilon}}(0)$ for $\epsilon>0$ small enough.
By Schauder estimates  \cite{capela,colorado,caffarelli,Jin} there are $C>0$ and  $0<\theta <1$ with
$$
\|\tilde w_{1,\epsilon}\|_{C^{0,\theta}(\mathcal{B}(0,2R)\cap \mathbb{R}^{N+1}_{+})}\leq C,\quad \|\tilde w_{2,\epsilon}\|_{C^{0,\theta}(\mathcal{B}(0,2R)\cap \mathbb{R}^{N+1}_{+})}\leq C
$$
for $\epsilon$ small enough.\ By Arzel\`a-Ascoli's Theorem, there exist subsequences $\{\tilde w_{i,\epsilon_k}\}$ such
that $\tilde w_{i,\epsilon_k} \to w_i$ as $k\to\infty$, for $i = 1,2,$ in $C^{0,\theta_0}_{{\rm loc}}$ for some $\theta_0\in (0,\theta)$. Then, we derive that $(w_1, w_2)$
satisfies
\begin{equation}\label{eq:2.5}\left\{\begin{array}{l@{\quad }l}
-{\rm div}(y^{1-2s}\nabla w_1) = 0,\quad -{\rm div}(y^{1-2s}\nabla w_2) = 0,& \ {\rm in}\ \mathbb{R}^{N+1}_{+},\\[1mm]
k_{s}y^{1-2s}\frac{ \partial w_1}{\partial\nu}= \frac{2p}{2^*_{s}}|x_0|^{\alpha}h(0)
w_1^{p-1}(x,0)w_2^{q}(x,0),& \ {\rm on}\ \mathbb{R}^{N}\times\{0\},\\[1mm]
k_{s}y^{1-2s}\frac{ \partial w_2}{\partial\nu}= \frac{2q}{2^*_{s}}|x_0|^{\alpha}h(0)
w_1^{p}(x,0)w_2^{q-1}(x,0),& \ {\rm on}\ \mathbb{R}^{N}\times\{0\},
\end{array}\right. \end{equation}
and  $w_1(0,0)=1,\, 0\leq w_2\leq 1$. Moreover, $w_i\in H^1_{0,L}(\mathbb{R}_+^{N+1})$.
If $x_0 = 0$ or $h(0) = 0$ or $w_2=0$, we have
 $w_1\equiv 0$, which is impossible since $w_1(0, 0) = 1$.
Suppose $x_0\not=0$, $w_2\not\equiv 0$ and $h(0) = \beta\in (0,1]$. Then
\begin{equation}\label{eq:2.5b}\left\{\begin{array}{l@{\quad }l}
-{\rm div}(y^{1-2s}\nabla w_1) = 0,\quad -{\rm div}(y^{1-2s}\nabla w_2) = 0,& \ {\rm in}\ \mathbb{R}^{N+1}_{+},\\[1mm]
k_{s}y^{1-2s}\frac{ \partial w_1}{\partial\nu}= \frac{2p}{2^*_{s}}\Lambda
w_1^{p-1}(x,0)w_2^{q}(x,0),& \ {\rm on}\ \mathbb{R}^{N}\times\{0\},\\[1mm]
k_{s}y^{1-2s}\frac{ \partial w_2}{\partial\nu}= \frac{2q}{2^*_{s}}\Lambda
w_1^{p}(x,0)w_2^{q-1}(x,0),& \ {\rm on}\ \mathbb{R}^{N}\times\{0\},
\end{array}\right. \end{equation}
where $\Lambda = |x_0|^\alpha \beta\in (0,1)$. Setting
$$
\bar w_1 := \Lambda^{\frac 1{2^*_{s}-2}} w_1,
\,\,\,\quad
\bar w_2 := \Lambda^{\frac 1{2^*_{s} -2}} w_2,
$$
we have $0< \bar w_1\leq \Lambda^{\frac 1{2^*_{s} -2}}$,
$0<\bar w_2\leq \Lambda^{\frac 1{2^*_{s} -2}},\,\, \bar w_1(0,0) = \Lambda^{\frac 1{2^*_{s}-2}}$ and  $(\bar w_1, \bar w_2)$ satisfies
\begin{equation}\label{eq:2.6}\left\{\begin{array}{l@{\quad }l}
-{\rm div}(y^{1-2s}\nabla \bar w_1) = 0,\quad -{\rm div}(y^{1-2s}\nabla \bar  w_2) = 0,& \ {\rm in}\ \mathbb{R}^{N+1}_{+},\\[1mm]
k_{s}y^{1-2s}\frac{ \partial \bar  w_1}{\partial\nu}= \frac{2p}{2^*_{s}}
\bar  w_1^{p-1}(x,0)\bar  w_2^{q}(x,0),& \ {\rm on}\ \mathbb{R}^{N}\times\{0\}, \\[1mm]
k_{s}y^{1-2s}\frac{ \partial\bar  w_2}{\partial\nu}= \frac{2q}{2^*_{s}}
\bar  w_1^{p}(x,0) \bar w_2^{q-1}(x,0),& \ {\rm on}\ \mathbb{R}^{N}\times\{0\},
\end{array}\right. \end{equation}
Define $\S:=\S^{0}_{s,p,q}(\R^{N+1}_{+})$ and observe that
\begin{align*}
 \int_{\R^{N+1}_{+}}k_s y^{1-2s}(|\nabla \bar w_1|^2 + |\nabla \bar w_2|^2)dx dy &=2\int_{\R^N} \bar w_{1}^{p}(x,0) \bar w_{2}^q(x,0)dx.
\end{align*}
Then, by formula \eqref{limit-norm}, we have
\begin{align}
\label{chain}
 \S^{\frac{N}{2s}}& \leq 2^{\frac{N-2s}{2s}}  \int_{\R^{N+1}_{+}}k_s y^{1-2s}(|\nabla \bar w_1|^2 + |\nabla \bar w_2|^2)dx dy  \notag \\
&= \Lambda^{\frac{N-2s}{2s}} 2^{\frac{N-2s}{2s}} \int_{\R^{N+1}_{+}}k_s y^{1-2s}(|\nabla  w_1|^2 + |\nabla  w_2|^2)dx dy \notag \\
&\leq  \Lambda^{\frac{N-2s}{2s}} \liminf_{\eps\to 0}2^{\frac{N-2s}{2s}} \int_{C_{B_\eps}}k_s y^{1-2s}(|\nabla  \tilde w_{1,\eps}|^2 + |\nabla  \tilde w_{2,\eps}|^2)dx dy  \\
& =  \Lambda^{\frac{N-2s}{2s}} \liminf_{\eps\to 0}   2^{\frac{N-2s}{2s}} \displaystyle\int_{C_{B}} k_s y^{1-2s}(|\nabla  w_{1,\eps}|^2 + |\nabla  w_{2,\eps}|^2)
dx dy \notag \\
&= \Lambda^{\frac{N-2s}{2s}}  \S^{\frac{N}{2s}}< \S^{\frac{N}{2s}}, \notag
\end{align}
a contradiction. Then $x_0\in \partial B_1(0).$ We can straighten $\partial B$ in a neighborhood of $x_0$ by a non-singular $C^1$ change
of coordinates. Let $x_N=\psi(x')$ be the equation of $\partial B$, where $x'=(x_1,x_2,\ldots,x_{N-1}),$ $\psi\in C^1.$
Define new coordinate system given by $z_i=x_i$ for $i=1,\ldots, N-1$, $z_N=x_N- \psi(x')$ and $z_{N+1}=y$.
Let $d_{\epsilon}=\dist(x_{\epsilon},\partial B).$ For $p_{\epsilon}+ q \to 2^{*}_{s} $
as $\epsilon \to 0,$
$\tilde w_{i,\epsilon}$  are well  defined in $B(0,\frac{\delta}{\lambda_{\epsilon}})\cap \R^{N+1}_{+}\cap \{z_N >
-\frac{d_{\epsilon}}{\lambda_{\epsilon}}\}$ for some $\delta>0$ small enough. Moreover
 $$
 \sup_{B(0,\frac{\delta}{\lambda_{\epsilon}})\cap \R^{N+1}_{+}\cap \{z_N >
-\frac{d_{\epsilon}}{\lambda_{\epsilon}}\}} \tilde w_{1,\epsilon} (x,y)=\tilde w_{1,\epsilon} (0,0)=1,
\quad
\sup_{B(0,\frac{\delta}{\lambda_{\epsilon}})\cap \R^{N+1}_{+}\cap \{z_N >
-\frac{d{\epsilon}}{\lambda_{\epsilon}}\}} \tilde w_{2,\epsilon} (x,y)\in (0,1].
$$
We now have the following

\noindent{\em Claim:} $d_{\epsilon}/\lambda_{\epsilon}\to +\infty\ \mbox{as}\ \epsilon \to 0.$

\noindent{\em Verification:} Suppose by contradiction that $d_{\epsilon}/\lambda_{\epsilon}$ remains bounded
and $d_{\epsilon}/\lambda_{\epsilon}\to s$ for some $s\geq 0.$
By the previous argument, since $|x_0|=1,$  we get
$\tilde w_{i,\epsilon} \to \tilde w_{i}$ in $C^{0,\gamma}_{{\rm loc}},$
$\tilde w_1(0,0)=1$  and
\begin{equation}\label{eq:4.7b}\left\{\begin{array}{l@{\quad }l}
{\rm div}(y^{1-2s}\nabla \tilde w_1) = 0,\quad {\rm div}(y^{1-2s}\nabla \tilde  w_2) = 0,& \ {\rm in}\ \{(z_1,\ldots,z_N,z_{N+1}):
\ z_N >-s, z_{N+1}>0\},\\[1mm]
k_{s}y^{1-2s}\frac{ \partial\tilde  w_1}{\partial\nu}= \Lambda\frac{2p}{2^*_{s}}
\tilde  w_1^{p-1}(x,0)\tilde  w_2^{q}(x,0),& \ {\rm on}\ \{(z_1,\ldots,z_N,z_{N+1}):
\ z_N >-s, z_{N+1}=0\},\\[1mm]
k_{s}y^{1-2s}\frac{ \partial\tilde  w_2}{\partial\nu}= \Lambda\frac{2p}{2^*_{s}}
\tilde  w_1^{p}(x,0)\tilde w_2^{q-1}(x,0),& \ {\rm on} \ \{(z_1,\ldots,z_N,z_{N+1}):
\ z_N >-s, z_{N+1}=0\},\\[1mm]
\tilde w_1(z)=\tilde w_2(z)=0,& \ {\rm on}\ \{(z_1,\ldots,z_N,z_{N+1}):
\ z_N =-s, z_{N+1}>0\},\\[1mm]
\tilde w_1(z),\tilde w_2(z)\in (0,1],& \ {\rm in}\ \{(z_1,\ldots,z_N,z_{N+1}):
\ z_N >-s, z_{N+1}>0\}.
\end{array}\right. 
\end{equation}
By a translation, $(\tilde w_1, \tilde w_2)$ verifies
\begin{equation}\label{eq:4.7c}\left\{\begin{array}{l@{\quad }cl}
{\rm div}(y^{1-2s}\nabla \tilde w_1) = 0,\quad {\rm div}(y^{1-2s}\nabla \tilde  w_2) = 0,& \ {\rm in} & \ \R^{N+1}_{++},\\[1mm]
k_{s}y^{1-2s}\frac{ \partial\tilde  w_1}{\partial\nu}= \Lambda\frac{2p}{2^*_{s}}
\tilde  w_1^{p-1}(x,0)\tilde  w_2^{q}(x,0),& \ {\rm on}&  \{(z_1,\ldots,z_N,z_{N+1}):
\ z_N >0, z_{N+1}=0\},\\[1mm]
k_{s}y^{1-2s}\frac{ \partial\tilde  w_2}{\partial\nu}= \Lambda\frac{2p}{2^*_{s}}
\tilde  w_1^{p}(x,0)\tilde w_2^{q-1}(x,0),& \ {\rm on} &  \{(z_1,\ldots,z_N,z_{N+1}):
\ z_N >0, z_{N+1}=0\},\\[1mm]
\tilde w_1(z)=\tilde w_2(z)=0,& \ {\rm on}&   \{(z_1,\ldots,z_N,z_{N+1}):
\ z_N =0, z_{N+1}>0\},\\[1mm]
\tilde w_2(z)\in (0,1],\tilde w_1 (0,\ldots,s,0)=1,& \ {\rm in}&  \R^{N+1}_{++}.
\end{array}\right. \end{equation}
Since $\tilde w_{i}\in H^{1}_{0,L}(\R^{N+1}_{++})$, by Proposition \ref{nonexistence},
$(\tilde w_1, \tilde w_2)=(0,0)$, which violates $\tilde w_1 (0,\ldots,s,0) = 1.$
Then the claim follows and $C_{B_{\epsilon}}$ converges to the entire $\R^{N+1}_{++}$ as $\epsilon \to 0.$

\noindent{\em Claim.} $\Lambda=|x_0|h(0)=h(0)=1.$
We can assume $(\tilde w_{1,\epsilon}, \tilde w_{2,\epsilon}) \to
(\tilde w_{1},\tilde w_{2}),$
$ \mbox{as}\ \epsilon \to 0,$ and $(\tilde w_{1},\tilde w_{2})$ satisfies
\begin{equation}\label{eq:4.7}\left\{\begin{array}{l@{\quad }cl}
{\rm div}(y^{1-2s}\nabla \tilde w_1) = 0,\quad {\rm div}(y^{1-2s}\nabla \tilde  w_2) = 0,& \ {\rm in} & \ \R^{N+1}_{++},\\[1mm]
k_{s}y^{1-2s}\frac{ \partial\tilde  w_1}{\partial\nu}= \Lambda\frac{2p}{2^*_{s}}
\tilde  w_1^{p-1}(x,0)\tilde  w_2^{q}(x,0),& \ {\rm on}&  \R^N \times \{0\},
\\[1mm]
k_{s}y^{1-2s}\frac{ \partial\tilde  w_2}{\partial\nu}= \Lambda\frac{2p}{2^*_{s}}
\tilde  w_1^{p}(x,0)\tilde w_2^{q-1}(x,0),& \ {\rm on} &  \R^N \times \{0\},\\[1mm]
\tilde w_1(z)=\tilde w_2(z)=0,& \ {\rm on}&   \{0\}\times (0,\infty),
\\[1mm]
\tilde w_i(z)\in (0,1],\tilde w_i (0,0)=1, & \ {\rm in}&  \R^{N+1}_{++}.\\[1mm]
\end{array}\right.
\end{equation}
If $\tilde w_2\equiv 0$ or $0\leq \Lambda <1,$ we reach the contradiction either as in \eqref{chain} or by Proposition \ref{nonexistence}. Hence, $\Lambda=1$ and $\tilde w_2\not\equiv 0$. This implies $M_{1,\varepsilon}^{-1} \tilde w_{2,\varepsilon}(\lambda_\varepsilon x +
x_\varepsilon) \to v(x)\not=0,$
and then $1\geq M_{1,\varepsilon}^{-1} M_{2,\varepsilon} \to\sigma>0$ as
$\varepsilon\to 0$, that is $M_{1,\varepsilon} = \O(1)M_{2,\varepsilon}$.  This is $(i)$ of Theorem 1.4. \newline
Let $y_\varepsilon\in B_1(0)$ be such that $w_{2,\varepsilon} (y_\varepsilon) = \max_{B_1(0)} w_{2,\varepsilon}(y).$ We
define
$
\tilde w_{2,\varepsilon}(x) = (\bar
\lambda_\varepsilon)^{(N-2s)/2}w_{2,\varepsilon}(\bar\lambda_\varepsilon
x +y_\varepsilon),
$
where $\bar
\lambda_\varepsilon^{(N-2s)/2}w_{2,\varepsilon}(y_\varepsilon) = 1$.
Suppose $y_\varepsilon\to y_0$. Again, using a blow up argument, we get
$y_0\in \partial B_1(0)$. Then,
in light of Lemma~\ref{lem:0.2}, we have 
$$
\tilde w_1(x,y)=a \W_1(x,y),\qquad \tilde w_2(x,y)=b \W_1(x,y)
$$ 
for some positive numbers $a,b$ such that $a/b=\sqrt{p/q}$.
Let $\tilde v_{i,\epsilon}=\tilde w_{i,\epsilon}-\tilde w_{i}. $  Then $\tilde v_{i,\epsilon} \rightharpoonup 0$ weakly in
$H^{1}_{0,L}(C_{\omega})$ for any $\omega \subset \R^{N+1}_{+}$ and
\begin{equation}\label{4.11}\left\{\begin{array}{l@{\quad }l}
{\rm div}(y^{1-2s}\nabla \tilde v_{1,\epsilon}) = 0,\quad {\rm div}(y^{1-2s}\nabla \tilde v_{2,\epsilon}) = 0,& \ {\rm in}\ C_{B_{\epsilon}},\\[1mm]
\kappa_{2s}y^{1-2s}\frac{ \partial \tilde v_{1,\epsilon}}{\partial\nu}= \frac{2p_\epsilon}{p_{\epsilon}+q}Q_{\epsilon}
\tilde  w_{1,\epsilon}^{p_\epsilon-1}(x,0)\tilde  w_{2,\epsilon}^{q}(x,0)
-\frac{p(N-2s)}{N}w_{1}^{p-1}w_{2}^{q}& \ {\rm on}\ B_{\epsilon}\times\{0\}\\[1mm]
\kappa_{2s}y^{1-2s}\frac{ \partial\tilde v_{2,\epsilon}}{\partial\nu}= \frac{2 q}{p_{\epsilon}+q}Q_{\epsilon}
\tilde  w_{1,\epsilon}^{p_\epsilon}(x,0) \tilde w_{2,\epsilon}^{q-1}(x,0)
-\frac{q(N-2s)}{N}w_{1}^{p}w_{2}^{q-1}
& \ {\rm on}\ B_{\epsilon}\times\{0\}\\[1mm]
\tilde v_{i,\epsilon}=-{\tilde w_{i}} ,& \ {\rm on}\ \partial_{L}C_{B_{\epsilon}},\\[1mm]
\end{array}\right.
\end{equation}
where we have set
$Q_{\epsilon} :=|\lambda_\epsilon x +
x_{1,\epsilon}|^\alpha h(\eps).
$
Multiplying the first equation in \eqref{4.11} by $\tilde v_{1,\epsilon}$ and $\tilde v_{2,\epsilon}$, respectively, integrating by parts,
and applying Br\'ezis-Lieb Lemma, as $p_{\epsilon}+q \to 2^{*}_{s},$  we have
\begin{align*}
 \lefteqn{ k_s\int_{C_{B_{\epsilon}}}y^{1-2s}(|\nabla \tilde v_{1,\epsilon}|^2 + |\nabla \tilde v_{2,\epsilon}|^2) dx dy}\\
&= \int_{B_{\epsilon}}  \Big(\frac{2p_\epsilon}{p_{\epsilon}+q}Q_{\epsilon} \tilde  w_{1,\epsilon}^{p_\epsilon-1}(x,0)
\tilde  w_{2,\epsilon}^{q}(x,0)
-\frac{p(N-2s)}{N} \tilde w_{1}^{p-1}(x,0) \tilde w_{2}^{q}(x,0) \Big)\tilde v_{1,\epsilon}(x,0) dx\\
& - k_s \int_{\partial_{L}B_{\epsilon}}y^{1-2s}
\frac{\partial \tilde v_{1,\epsilon}}{\partial \nu} w_1 dS \\
&+\int_{B_{\epsilon}}\Big(\frac{2q}{p_{\epsilon}+q} Q_{\epsilon} \tilde  w_{1,\epsilon}^{p_\epsilon}(x,0)\tilde  w_{2,\epsilon}^{q-1}(x,0)
-\frac{q(N-2s)}{N} \tilde w_{1}^{p}(x,0) \tilde w_{2}^{q-1}(x,0)\Big)\tilde v_{2,\epsilon}(x,0) dx\\
& - k_s \int_{\partial_{L}B_{\epsilon}}y^{1-2s}
\frac{\partial \tilde v_{2,\epsilon}}{\partial \nu} w_2 dS \\
&=\frac{p(N-2s)}{N} \int_{B_{\epsilon}}  Q_{\epsilon} \Big(\tilde  w_{1,\epsilon}^{p_\epsilon-1}(x,0)\tilde  w_{2,\epsilon}^{q}(x,0)
- \tilde w_{1}^{p-1}(x,0) \tilde w_{2}^{q}(x,0)\Big)\tilde v_{1,\epsilon}(x,0) dx\\
& - k_s \int_{\partial_{L}B_{\epsilon}}y^{1-2s}
\frac{\partial \tilde v_{1,\epsilon}}{\partial \nu} \tilde  w_1 dS + \frac{p(N-2s)}{N} \int_{B_{\epsilon}}(Q_{\epsilon}-1) \tilde  w_{1}^{p-1}(x,0)\tilde  w_{2}^{q}(x,0) dx\\
&+\frac{q(N-2s)}{N} \int_{B_{\epsilon}} Q_{\epsilon} \Big(\tilde  w_{1,\epsilon}^{p_\epsilon}(x,0)\tilde  w_{2,\epsilon}^{q-1}(x,0) dx
- \tilde w_{1}^{p}(x,0)  \tilde w_{2}^{q-1}(x,0))\Big) \tilde v_{2,\epsilon}(x,0) dx\\
& - k_s \int_{\partial_{L}B_{\epsilon}}y^{1-2s}
\frac{\partial \tilde v_{2,\epsilon}}{\partial \nu} \tilde w_2 dS  + \frac{q(N-2s)}{N} \int_{B_{\epsilon}}(Q_{\epsilon}-1) \tilde  w_{1}^{p}(x,0)\tilde  w_{2}^{q-1}(x,0)dx+ o_\eps(1),
\end{align*}
since $Q_{\epsilon}\to 1$ as $\epsilon \to 0$. Recalling that $\W_{\epsilon}$ decay at infinity, we obtain
\begin{align*}
&   k_s\int_{C_{B_{\epsilon}}}y^{1-2s}(|\nabla \tilde v_{1,\epsilon}|^2 + |\nabla \tilde v_{2,\epsilon}|^2) dx dy\\
&=\frac{p(N-2s)}{N} \int_{B_{\epsilon}}Q_{\epsilon}\Big( (\tilde v_{1,\epsilon}+\tilde w_1)^{p_\epsilon-1}(x,0)(\tilde v_{2,\epsilon}+\tilde w_2)^{q}(x,0)
- \tilde w_{1}^{p-1}(x,0) \tilde w_{2}^{q}(x,0)\Big)\tilde v_{1,\epsilon}(x,0) dx\\
&+\frac{q(N-2s)}{N} \int_{B_{\epsilon}}Q_{\epsilon}\Big( (\tilde v_{1,\epsilon}+\tilde w_1)^{p_\epsilon}(x,0)(\tilde v_{2,\epsilon}+\tilde w_2)^{q-1}(x,0)
- \tilde w_{1}^{p}(x,0)  \tilde w_{2}^{q-1}(x,0)\Big)\tilde v_{2,\epsilon}(x,0) dx +o_\eps(1).
\end{align*}
Inserting $\tilde w_{i,\epsilon}=\tilde v_{i,\epsilon}=\tilde w_{i}, $ and using the following inequalities (cf.\ \cite{brezis,WY})
\begin{align*}
& ||a+b|^{p} -|a|^p -|b|^p -pab(|a|^{p-2}+|b|^{p-2}|)|\leq C\left\{\begin{array}{rcl} |a||b|^{p-1}& \mbox{if}& |a|\geq |b|,\\
               |a|^{p-1}|b|& \mbox{if}& |a|\leq |b|,\quad 1\leq p\leq 3,\\
              \end{array}\right. \\
& ||a+b|^{p} -|a|^p -|b|^p -pab(|a|^{p-2}+|b|^{p-2}|)|\leq C (|a|^{p-2}|b|^2 +|a|^2 |b|^{p-2}),\quad p\geq 3,
\end{align*}
 we infer that
\begin{align}\label{4.12}
 \lefteqn{ k_s\int_{C_{B_{\epsilon}}}y^{1-2s}(|\nabla \tilde v_{1,\epsilon}|^2 + |\nabla \tilde v_{2,\epsilon}|^2) dx dy}\nonumber\\
&=\frac{p(N-2s)}{N} \int_{B_{\epsilon}}Q_{\epsilon} \tilde  v_{1,\epsilon}^{p}(x,0)\tilde  v_{2,\epsilon}^{q}(x,0)dx
+ \frac{q(N-2s)}{N} \int_{B_{\epsilon}}Q_{\epsilon} \tilde  v_{1,\epsilon}^{p}(x,0)\tilde  v_{2,\epsilon}^{q}(x,0)dx + o_\eps(1) \\
&=2 \int_{B_{\epsilon}}Q_{\epsilon} \tilde  v_{1,\epsilon}^{p}(x,0)\tilde  v_{2,\epsilon}^{q}(x,0)dx +o_\eps(1).\nonumber
\end{align}
By definition of $\S^{\alpha}_{s,p_\epsilon,q}$ and recalling that $\S^{\alpha}_{s,p_\epsilon,q}=\S+ o_\eps(1),$ we
get
$$k_s\int_{C_{B_{\epsilon}}}y^{1-2s}(|\nabla \tilde v_{1,\epsilon}|^2 + |\nabla \tilde v_{2,\epsilon}|^2) dx dy\geq
\S\Big(\int_{B_{\epsilon}}Q_{\epsilon} \tilde  v_{1,\epsilon}^{p}(x,0)\tilde
 v_{2,\epsilon}^{q}(x,0)dx\Big)^{\frac{2}{p_{\epsilon}+q}} + o_\eps(1).
$$
Assume by contradiction that
$$
\lim_{\eps\to 0} k_s\int_{C_{B_{\epsilon}}}y^{1-2s}(|\nabla \tilde v_{1,\epsilon}|^2 + |\nabla \tilde v_{2,\epsilon}|^2) dx dy= \rho>0.
$$
Then, we have
\begin{align*}
k_s\int_{C_{B_{\epsilon}}}y^{1-2s}(|\nabla \tilde v_{1,\epsilon}|^2 + |\nabla \tilde v_{2,\epsilon}|^2) dx dy =
2 \int_{B_{\epsilon}}Q_{\epsilon} \tilde  v_{1,\epsilon}^{p}(x,0)\tilde  v_{2,\epsilon}^{q}(x,0)dx +o(1)
\geq \frac{\S^{\frac{N}{2s}}}{2^{\frac{N-2s}{2s}}} + o_\eps(1).
 \end{align*}
By using a Br\'ezis-Lieb type Lemma and arguments similar to the ones above, we get
\begin{align*}
I( \tilde w_{1,\epsilon}, \tilde w_{2,\epsilon})
&= \int_{C_{B_{\epsilon}}} \frac{k_s}{2}y^{1-2s}(|\nabla\tilde v_{1,\epsilon}|^2  + |\nabla \tilde v_{2,\epsilon}|^2) dx dy-
 \frac{2}{p_\epsilon+q}\int_{B_{\epsilon}} Q_{\epsilon}\tilde v_{1,\epsilon}(x,0)^{p} \tilde v_{2,\epsilon}(x,0)^{q} dx\\
& +\int_{\R^{N+1}_+} \frac{k_s}{2}y^{1-2s}(a^2 +b^2)|\nabla  \W_1|^2 dx dy-
 \frac{2}{p_\epsilon +q}\int_{\R^N} |a\W_1(x,0)|^{p_\epsilon} |b\W_1(x,0)|^{q} dx+ o_\eps(1)\\
&\geq \frac{2s}{N}\frac{\S^{\frac{N}{2s}}}{2^{\frac{N-2s}{2s}}} + o_\eps(1).
\end{align*}
On the other hand,
\begin{align*}
I(\tilde w_{1,\epsilon}, \tilde w_{2,\epsilon}) & :=\frac{k_{s}}{2}\int_{C_{B_{\epsilon}}}
y^{1-2s}(|\nabla \tilde w_{1,\epsilon}|^2  + |\nabla \tilde w_{2,\epsilon}|^2) dx dy-
 \frac{2}{p+q}\int_{B_{\epsilon}} Q_\eps \tilde w_{1,\epsilon}^{p}(x,0) \tilde w_{2,\epsilon}^{q}(x,0) dx \\
& =\frac{s}{N}\frac{\S^{\frac{N}{2s}}}{2^{\frac{N-2s}{2s}}} + o_\eps(1),
 \end{align*}
a contradiction. Hence $\rho=0$, proving also Theorem \ref{main2}(ii).

\vskip25pt

\medskip


\begin{thebibliography}{99}


\bibitem{alves} 
\textrm{C.O.\ Alves, D.C.\ de Morais Filho, M.A.S.\ Souto},     \textit{On systems of elliptic
 equations involving subcritical or critical Sobolev exponents},   Nonlinear Anal. {\bf 42}  (2000),  771--787.


\bibitem{CL} \textrm{W.\ Chen, C.\ Li},
\textit{Methods on Nonlinear Elliptic Equations}, AIMS Book Series Diff. Equa. Dyn. Sys. {\bf 4} (2010).  

\bibitem{badialeserra}
\textrm{M.\ Badiale, E.\ Serra}, \textit{ Multiplicity results for the supercritical H\'enon equation},
 Adv. Nonl.\ Stud. {\bf 4} (2004), 453--467.

\bibitem{barrios} \textrm{B.\ Barrios, E.\ Colorado, A.\ de Pablo, U.\ S\'anchez}, \textit{On some critical problems for the fractional Laplacian
operator}, J. Differential Equations {\bf 252} (2012), 613--6162.

\bibitem{B} \textrm{G.\ Bianchi}, \textit{Non-existence of positive solutions to semilinear elliptic equations
on $\mathbb{R}^n$ or $\mathbb{R}^n_+$ through the method of moving planes}, 
Comm. Partial Differential Equations {\bf 22} (1997), 1671--1690.

\bibitem{colorado} \textrm{C.\ Br$\ddot{\mbox{a}}$ndle, E. Colorado, A. de Pablo, U. S\'anchez}, \textit{A concave-convex elliptic problem involving the fractional
Laplacian}, Proc. Roy. Soc. Edinburgh Sect A {\bf 143} (2013), 39--71.

\bibitem{brezis} \textrm{H.\ Br\'ezis, L. Nirenberg,} \textit{ A minimization problem with critical exponent and non-zero data,} in
Symmetric in Nature,  Sc. Norm. Sup. Pisa (1989), 129--140.

\bibitem{cabre} \textrm{X. Cabr\'e, Y. Sire,}  \textit{Nonlinear equations for fractional Laplacians I: regularity, maximum
principles and Hamiltonian estimates, } Ann. Inst.\ H.\ Poincar\'e {\bf 31} (2014), 23--53.

\bibitem{cabretan} \textrm{X. Cabr\'e, J.G. Tan,} \textit{Positive solutions of nonlinear problems involving the
square root of the Laplacian}, Advances in Mathematics  {\bf 224} (2010), 2052--2093.

\bibitem{caffarelli} \textrm{L. Caffarelli, L. Silvestre,} \textit{An extension problems related to the fractional Laplacian,}
Comm.\ Partial Differential Equations {\bf 32} (2007), 1245--1260.

\bibitem{cao} \textrm{D.M. Cao, S.Peng} \textit{ The asymptotic behaviour of the ground state solutions for H\'enon equation.}
 J. Math. Anal. Appl. {\bf 278} (2003), 1--17.

\bibitem{capela} \textrm{A. Capella, J. D\'avila, L. Dupaigne, Y. Sire,} \textit{ Regularity of radial extremal solutions for some non local semilnear equations,} Comm. P.D.E. {\bf 36} (2011),  1353--1384.


\bibitem{numerical} \textrm{G. Chen, W.-M. Ni, J. Zhou,} \textit{Algorithms and visualization for solutions of
nonlinear elliptic equations,}Internat. J. Bifur. Chaos Appl. Sci. Engn. {\bf 10} (2000), 1565--1612.

\bibitem{chenyang} \textrm{X.L.\ Chen, J. Yang,} \textit{Limiting behavior of solutions to an equation with the fractional Laplacian.}
Differential Integral Equations {\bf 27} (2014), 157--179.

\bibitem{CLO} \textrm{W. Chen, C. Li, B. Ou},  \textit{Classification of solutions of an
integral equation}, Comm. Pure Appl. Math. {\bf 59} (2006), 330--343.


\bibitem{choi} \textrm{W. Choi}, \textit{ A priori bound for nonlinear elliptic equation and system involving a
fractional laplacian,} preprint

\bibitem{nezza} \textrm{E. Di Nezza, G. Palatucci, E. Valdinoci,} \textit{ Hitchhiker's guide to the fractional Sobolev spaces,}
Bull. Sci. Math. {\bf 136} (2012), 521--573.

\bibitem{FW} \textrm{M.M. Fall, T. Weth,} \textit{Nonexistence results for a class of fractional elliptic boundary value problems,} J.
Funct. Anal. {\bf 263} (2012), 2205--2227.

\bibitem{Gidas} \textrm{ B. Gidas, W.-M. Ni, L. Nirenberg,} \textit{Symmetry and related properties via the
maximum principle,} Comm. Math. Phys. {\bf 68} (1979), 209--243.




\bibitem{GL} \textrm{Y. Guo, J. Liu},   
\textit{Liouville type theorems for positive solutions of elliptic system in $\mathbb{R}^N$},   Comm. Partial Differential Equations 
{\bf 33}  (2008),  263--284.

\bibitem{henon} \textrm{M. H\'enon}, \textit{Numerical experiments on the stability of spheriocal stellar systems}, Astronomy
and Astrophysics  {\bf 24} (1973), 229--238.

\bibitem{Jin} \textrm{T. Jin, Y. Li, J. Xiong,} \textit{On a fractional Nirenberg problem, part I: blow up analysis and compactness of
solutions,} JEMS {\bf 16} (2014), 1111--1171.


 \bibitem{lions} \textrm{P.L. Lions}, \textit{The concentration compactness principle in the calculus of variations. The
 limit case (Part 1 and Part 2),} Rev. Mat. Iberoamewricana {\bf 1} (1985), 145--201, 45--121.

\bibitem{Ni} \textrm{W.M.\ Ni,} \textit{A nonlinear Dirichlet problem on the unit ball and its applications}, Indiana
U Math. J.\  {\bf 31} (1982), 801--807.



\bibitem{smets} \textrm{D. Smets, J.B. Su, M. Willem} \textit{Non-radial ground states for the H\'enon equation},
Commun. Contemp. Math. {\bf 4} (2002), 467--480.

\bibitem{tan} \textrm{J. Tan,} \textit{ Positive solutions for non local elliptic problems,} Disc. Cont. Dyn. Syst. {\bf 33} (2013), 837--859.	

 \bibitem{jianfu} \textrm{Y. Wang, J. Yang}, \textit{Asymptotic behavior of ground state solution for Henon type
 systems}, Electron. J. Diff. Eqns. {\bf 116} (2010), 1--14.

\bibitem{WY} \textrm{Y.\ Wang, J. Yang}, \textit{Existence and asymptotic behavior of solutions for H\'enon type
 systems}, Adv.\ Nonlinear Studies {\bf 13} (2013), 533--553.
 
 \bibitem{ZL} \textrm{Y.\ Zhao, Y.\ Lei},   
 \textit{Asymptotic behavior of positive solutions of a nonlinear integral system},   
 Nonlinear Anal. {\bf 75}  (2012),  1989--1999.

\end{thebibliography}
\end{document}